\DeclarePairedDelimiterX{\norm}[1]{\lVert}{\rVert}{#1}
\DeclarePairedDelimiterX{\abs}[1]{\lvert}{\rvert}{#1}
\newcommand\restr[2]{{
  \left.\kern-\nulldelimiterspace 
  #1 
  \right|_{#2} 
  }}
\newcommand{\diff}{\mathrm{d}}
\newcommand{\bbR}{\mathbb{R}}
\newcommand{\bbC}{\mathbb{C}}
\newcommand{\bbE}{\mathbb{E}}
\newcommand{\bbN}{\mathbb{N}}
\newcommand{\calA}{\mathcal{A}}
\newcommand{\calAv}{\mathcal{A}_{v}}
\newcommand{\calH}{\mathcal{H}}
\newcommand{\calB}{\mathcal{B}}
\newcommand{\calX}{\mathcal{X}}
\newcommand{\calY}{\mathcal{Y}}
\newcommand{\calF}{\mathcal{F}}
\newcommand{\calP}{\mathcal{P}}
\newcommand{\Tr}{\text{Tr}}
\newtheorem{theorem}{Theorem}[section]
\newtheorem{lemma}{Lemma}[section]
\newtheorem{definition}{Definition}[section]
\newtheorem{corollary}{Corollary}[section]
\newtheorem{proposition}{Proposition}[section]
\newtheorem{remark}{Remark}[section]
\newtheorem{assumption}{Assumption}[section]
\newtheorem{example}{Example}[section]
\title{A Fourier representation of kernel Stein discrepancy with application to Goodness-of-Fit tests for measures on infinite dimensional Hilbert spaces}
\author[1]{George Wynne}
\author[2,3]{Miko\l aj J. Kasprzak}
\author[4,5]{Andrew B. Duncan}
\affil[1]{University of Bristol}
\affil[2]{University of Luxembourg}
\affil[3]{MIT}
\affil[4]{Imperial College London}
\affil[5]{The Alan Turing Institute}
\date{}
\begin{document}

\makeatletter\let\Title\@title\makeatother
\maketitle


\begin{abstract}
Kernel Stein discrepancy (KSD) is a widely used kernel-based measure of discrepancy between probability measures. It is often employed in the scenario where a user has a collection of samples from a candidate probability measure and wishes to compare them against a specified target probability measure. KSD has been employed in a range of settings including goodness-of-fit testing, parametric inference, MCMC output assessment and generative modelling. However, so far the method has been restricted to finite-dimensional data. We provide the first analysis of KSD in the generality of data lying in a separable Hilbert space, for example functional data. The main result is a novel Fourier representation of KSD obtained by combining the theory of measure equations with kernel methods. This allows us to prove that KSD can separate measures and thus is valid to use in practice. Additionally, our results improve the interpretability of KSD by decoupling the effect of the kernel and Stein operator. We demonstrate the efficacy of the proposed methodology by performing goodness-of-fit tests for various Gaussian and non-Gaussian functional models in a number of synthetic data experiments.
\end{abstract}

\section{Introduction} \label{sec:introduction}
The kernel Stein discrepancy (KSD) \citep{Chwialkowski2016,Liu2016} is a kernel-based discrepancy between probability measures.  It provides a convenient approach to measure the divergence between a set of samples and a target probability measure which might only be known up to a normalization constant. The construction of KSD combines the Stein identity \citep{Stein1972,Ley2017,Chen2011}, which provides a set of sufficient conditions for a random variable to be distributed according to a given probability measure, and reproducing kernel Hilbert space (RKHS) theory.  Through the combination of these two tools, KSD has become an effective and generally-applicable tool in computational statistics and machine learning. Applications range from assessing MCMC output quality \citep{Gorham2017}, post-processing of MCMC output \citep{South2022}, goodness-of-fit testing \citep{Chwialkowski2016,Liu2016}, variational  and amortized inference  \citep{Liu2016SVGD,Fisher2021}, generalised Bayesian inference \citep{Matsubara2021} and generative modelling \citep{Grathwohl2020}. For a recent survey see \citep{Anastasiou2021}.

The goal of this paper is to tackle two central challenges of KSD. The first issue relates to the \emph{applicability} of KSD. By this we are specifically referring to the fact that theoretical and practical investigation of KSD has largely focused on finite dimensional Euclidean data. Some other contexts have been investigated, for example \citet{Yang2018} investigated discrete domains and \citet{Barp2018} studied compact Riemannian manifolds. Despite these advances existing theory does not cover the application of KSD-type discrepancies to the infinite dimensional Hilbert space context which is central to applications in non-parametric statistical modelling \citep{Ghosal2017}, Bayesian inverse problems \citep{Stuart2010} and functional data analysis \citep{horvath2012inference}.  The first aim of this paper is to extend the applicability of KSD to the setting of probability measures on infinite dimensional Hilbert spaces, establishing conditions under which it is able to separate distinct probability measures. Establishing the validity of KSD in this new setting would permit many of the aforementioned applications of KSD to be readily extended to infinite dimensional contexts.  We note that the KSD methodology has already found numerical application in infinite dimensions in the context of Stein variational gradient descent \citep{Jia2021} but without the accompanying theory which this paper aims to provide. 

The second issue concerns the \emph{sensitivity} of the behaviour of KSD with respect to parameter choices such as the choice of kernel and Stein operator. Understanding this is crucial to permit better performance of KSD-based statistical procedures. Currently the formulation of KSD is quite complicated and intertwines multiple parameter choices, making it hard to isolate the effect of each one. Therefore our desire is to find a representation of KSD where the different parameter choices have an isolated effect. 

These two issues will be addressed by deriving a \emph{Fourier representation} of KSD. Such a representation addresses the first aim since it will then become possible to establish conditions under which the KSD separates probability measures over infinite dimensional Hilbert spaces, meaning it is zero if and only if the two probability measures are equal. This is an essential property needed before KSD can be used in practice. The second aim is handled since the resulting Fourier decomposition isolates the effect of the kernel on the KSD which then makes clear the impact of kernel choice and hyper-parameters. The Fourier representation is achieved by combining developments in infinite dimensions of Stein's method \citep{Barbour1990,Shih2011,Bourguin2020} with elliptic equations for measures on Hilbert spaces \citep{Bogachev1995regularity,Albeverio1999uniqueness}. When the reference measure is Gaussian, this representation recovers the Stein-Tikhomirov method \citep{Tikhomirov1981}, where a partial differential equation is used to characterise a Gaussian distribution, see Remark \ref{rem:Stein_Tik}.

Once the separating property is established for probability measures over infinite dimensional Hilbert spaces, KSD may be employed. A central example of infinite dimensional data is functional data, studied in the field of functional data analysis (FDA), where data samples are functions, for example  curves, surfaces or images. In this setting, it is natural to view the samples as realisations of a probability measure supported on an infinite dimensional Hilbert space such as $L^{2}([0,1])$ \citep{horvath2012inference,Hsing2015,ferraty2006nonparametric}. The study has matured from initial developments in the 1990s \citep{Ramsay2005} into a broad field with multiple different applications and directions, for a recent review see \citep{Wang2016}. Common statistical tasks within FDA include regression \citep{Kadri2016}, classification \citep{Rossi2006}, two-sample testing \citep{Wynne2022}, Gaussianity testing \citep{Henze2020} and goodness-of-fit testing \citep{Ditzhaus2018}.

The main issue associated with statistical treatment of functional data is the infinite dimensional nature of the data. Many classical statistical procedures do not readily generalise to infinite dimensions. For example, as there is no infinite dimensional generalisation of the Lebesgue measure \cite{hunt1992prevalence} there is no canonical measure with respect to which a density can be defined, precluding the use of density-based methods. In fact, not only is there no canonical base measure but even the assumption that there exists a measure which both the user chosen target measure and a given candidate measure are both absolutely continuous with respect to is often too strong. Indeed, in infinite dimensions, stringent conditions are required for two Gaussian measures to be non-singular, see e.g.  \citet{Bogachev1998}. 

This issue is often side-stepped through the  ``project first'' approach to FDA, where functional data is first projected onto a finite dimensional subspace, after which classical  statistical procedures can be employed. The  particular choice of projection can be a fixed set of basis elements or can be data-driven, for example using functional principal components. This approach has been employed in two-sample testing for arbitrary difference of measures \citep{Pomann2016}, two-sample testing for difference of covariance operators \citep{Panaretos2010} and goodness-of-fit testing \citep{Bugni2009,Ditzhaus2018,CuestaAlbertos2007}. A challenge with this approach is the choice of the projection. If the chosen projections fail to sufficiently capture the variability of the random functions being investigated then the resulting procedure may be ineffective. In addition, the projections themselves rarely yield closed form expressions, and estimating them can be computationally non-trivial, for example requiring expensive Monte Carlo simulations. 

This work adopts an alternative approach, by formulating a discrepancy, and an associated goodness-of-fit test, directly on the infinite dimensional space and hence offers a totally different statistical paradigm compared to standard methods in FDA. The target measures we shall study are absolutely continuous with respect to a base Gaussian measure, this of course includes the case of Gaussian measures themselves. We call such measures Gibbs measures. This is a wide class of measures and is of central interest to functional data analysis since Gaussian measures can naturally be identified with Gaussian processes \citep{Rasmussen2006} and Gibbs measures can correspond to conditioned diffusions and solutions of stochastic differential equations \citep{Beskos2008}, see Section \ref{sec:numerics}. An application of KSD to goodness-of-fit testing of Gibbs measures is explored and in the Gaussian case compared to existing methods in FDA, for the Gibbs case we are not aware of any one-sample goodness-of-fit tests to compare to.

In summary, the contributions of this paper are
\begin{itemize}
    \item The formulation of KSD for probability measures on a separable Hilbert space.
    \item The derivation of a Fourier representation of KSD which provides new insight on the behaviour of KSD on separable Hilbert spaces, in both the finite and infinite dimensional settings.
    \item The identification of conditions which ensure that KSD can separate measures over separable Hilbert spaces and thus lead to consistent statistical procedures.
    \item The formulation of a one-sample goodness-of-fit test for Gibbs measures over separable Hilbert spaces.
    \item Demonstration of numerical performance of KSD based goodness-of-fit tests compared to existing approaches.
\end{itemize}

The rest of the paper is structured as follows, Section \ref{sec:KSD_intro} contains preliminary results and concepts required for the technical content of the paper. Section \ref{sec:KSD_for_GPs} introduces Stein operators and kernel Stein discrepancy and shows how KSD may be written in an easily estimated form. Section \ref{sec:KSD_Tik} contains the main contributions of the paper, showing how KSD can be written in a novel Fourier form which facilitates a proof of conditions sufficient for KSD to separate measures over infinite dimensional spaces. Section \ref{sec:numerics} contains synthetic numerical experiments, evaluating the performance of KSD as a basis for functional goodness-of-fit tests for Gaussian and non-Gaussian targets and as an evaluation tool to measure quality of simulations of paths of stochastic differential equations. Section \ref{sec:conclusion} contains concluding remarks. Proofs are in the supplement \citep{Wynne2023Supp}.

\section{Preliminaries}\label{sec:KSD_intro}

In this section the construction of Stein discrepancy and kernel Stein discrepancy \citep{Chwialkowski2016,Liu2016} is recalled. Given a topological space $\calX$, let $\calB(\calX)$ be the set of Borel measures on $\calX$ and $\calP(\calX)$ the set of Borel probability measures on $\calX$. The expectation of a measurable function $f$ of a random variable $X$ with law $P\in\calP(\calX)$ is denoted by $\bbE_{P}[f(X)]$ and when there are two independently, identically distributed versions of $X$ with respect to which expectation is taken, use $X,X'$ to denote the two random variables. For two Hilbert spaces $\calX,\calY$ denote by $L(\calX,\calY)$ the space of bounded linear maps from $\calX$ to $\calY$ and set $L(\calX) \coloneqq L(\calX,\calX)$.

\subsection{Stein's lemma and Stein discrepancies}\label{subsec:SD_intro}
Given a target Borel probability measure $P\in\calP(\calX)$ and a candidate Borel probability measure $Q\in\calP(\calX)$, the goal of a statistical discrepancy is to quantify how different $Q$ is from $P$. Integral probability metrics (IPMs) \cite{muller1997integral}  are a class of discrepancies which take the form
$D(Q,P) = \sup_{g\in\mathcal{G}}\left\lvert\bbE_{P}[g(X)] - \bbE_{Q}[g(X)]\right\rvert$ where $\mathcal{G}$ is a set of Borel measurable functions from $\calX$ to $\mathbb{R}$.  This defines a pseudo-metric on the space of probability measures and becomes a metric if $\mathcal{G}$ is sufficiently rich. Examples of IPMs include the Total Variation, Kantorovich and Dudley metrics \citep{Sriperumbudur2010}. By exploiting Stein's lemma \citep{Stein1972},  a family $\mathcal{G}$ can be constructed for which the resulting IPM involves expectations with respect to $P$ that can be computed trivially.  This is helpful since, in many applications of interest,  $Q$ will be an empirical measure, while computing expectations with respect to $P$ will be intractable.  To this end, given $P\in\calP(\calX)$, an operator $\calA$ and a set of functions $\calF$, lying in the domain of $\calA$, is called a Stein operator of $P$ and  a Stein class of $P$, respectively, if for every $Q\in\calP(\calX)$
\begin{align*}
    Q=P \iff \bbE_{Q}[\calA f(X)]=0 \quad \forall f\in\calF.
\end{align*}
The exact domain and range of the operator will be discussed when specific examples are employed.  The Stein discrepancy (SD) is obtained by choosing $\mathcal{G} = \mathcal{A}\mathcal{F}$ in the definition of an IPM
\begin{align}
    \text{SD}(Q,P) = \sup_{f\in\calF}\left\lvert\bbE_{P}[\calA f(X)] - \bbE_{Q}[\calA f(X)]\right\rvert = \sup_{f\in\calF}\left\lvert\bbE_{Q}[\calA f(X)]\right\rvert.\label{eq:SD}
\end{align}

A common technique to obtain Stein operators and Stein classes is the generator method. Namely, a Stein operator for a probability measure $P$ can be constructed from the infinitesimal generator of any Markov process which has unique invariant distribution $P$ \citep{Barbour1988,Barbour1990,Gotze1991}. This method is widely used due to the availability of Markov processes with closed form generators that are mathematically well-understood. 

\begin{example}[Langevin-Stein Operator]\label{exp:Afd}
For $\calX = \bbR^{d}$ and a measure $P\in\calP(\calX)$ with positive, differentiable density $p$ consider the It\^o stochastic differential equation on $\calX$ 
\begin{equation}\label{eq:langevin_sde}
\diff X_t = \nabla \log p(X_t)\diff t + \sqrt{2}\diff B_t,
\end{equation}
where $B_{t}$ is a standard Brownian motion. This is known as the overdamped Langevin equation. Under basic conditions on $p$ \citep{Gorham2017} the generator may be written
\begin{align}
    \calA f(x) = \Delta f(x) + \langle\nabla\log p(x),\nabla f(x)\rangle_{\bbR^{d}}.\label{eq:A_fd}
\end{align}
This can be used as a Stein operator and therefore is often called the Langevin-Stein operator. It is a popular choice since knowledge of the normalisation constant of $p$ is not required to compute $\calA$, making it appropriate, for example, in the setting where $p$ is a Bayesian posterior distribution \citep{Matsubara2021}.
\end{example}

The operator \eqref{eq:A_fd} is often used within the probability literature \citep{Nourdin2009,Anastasiou2021} due to its links to Markov processes. However, the evaluation of \eqref{eq:A_fd} involves taking second derivatives of $f$ which for convenience and computation purposes can be undesirable. Therefore in the machine learning and computational statistics literature it is common for a \emph{vectorisation} of the operator to be used instead. This is where the $\nabla f$ terms are replaced with a function $F\colon \calX\rightarrow\calX$ to reduce the number of derivatives involved in evaluating the operator. 

\begin{example}[Vectorised Langevin-Stein Operator]
    Continuing Example \ref{exp:Afd} with $\calX = \bbR^{d}$, if one starts from \eqref{eq:A_fd} and replaces $\nabla f$ with $F$ for some $F\colon\calX\rightarrow\calX$ then the result is the vectorised operator
    \begin{equation}\label{eq:vec_Afd}
        \calAv F(x) = \Tr[JF(x)] + \langle \nabla \log p(x),F(x)\rangle_{\bbR^{d}}.
    \end{equation}
    where $JF(x)$ is the Jacobian matrix of $F$ at $x$ and $\emph{Tr}$ is the matrix trace. This operator $\calAv$ is widely used in machine learning \citep{Gorham2015,Liu2016,Chwialkowski2016}. It is important to note that $\calAv$ is \emph{not} the generator of any Markov process since it acts on functions that take values in $\calX$.
\end{example}

Modifying Stein operators to suit the particular needs of a problem is a common approach in probability and statistics. Indeed, this is one of the strengths of Stein's method. Examples of these include the method of standardization \citep{Mijoule2018,Xu2022}. Therefore the act of vectorising a Stein operator should be seen as a modification to suit the purposes of the task at hand, namely vectorisation will offer an easier way to compute test statistics due to involving less derivatives.

This section has detailed how one can obtain Stein operators from Markov processes via the generator approach and how it is common to vectorise operators to make them easier to implement and compute. We believe it is important to investigate both non-vectorised and vectorised operators since the former is often studied in the probability literature and the latter in machine learning and computational statistics literature, therefore it is rare to see them compared and their properties contrasted. In Section \ref{sec:KSD_for_GPs} this strategy is adopted for measures on infinite dimensional Hilbert spaces, making use of infinitesimal generators arising from a \emph{gradient system} \citep{DaPrato2002,DaPrato2006} that is the infinite dimensional analogue of the overdamped Langevin equation. A vectorised version is then studied which simplifies some calculations due to the operator containing less derivatives and eases the implementation of the algorithm.

\subsection{Kernels and reproducing kernel Hilbert spaces}\label{subsec:KSD_intro}
Even though the Stein discrepancy \eqref{eq:SD}  cirmumvents the need to evaluate expectations with respect to $P$ the expression still requires evaluating the supremum over an infinite set of functions in the Stein class. The approaches presented in \citet{Chwialkowski2016,Liu2016} overcome this issue by choosing $\calF$ to be the unit ball of a reproducing kernel Hilbert space (RKHS) which is a Hilbert space of functions with special properties.  A Hilbert space $\calH$ of functions from $\calX$ to $\bbR$ is called a reproducing kernel Hilbert space \citep{Aronszajn1950} if there exists a function $k\colon\calX\times\calX\rightarrow\bbR$, called a kernel, that is symmetric and positive definite such that  (i) $k(\cdot,x)\in\calH, \:\forall x\in\calX$ and (ii) $\langle f,k(\cdot,x)\rangle_{\calH} = f(x),\:\forall f\in\calH,x\in\calX$. Property (ii) is called the \emph{reproducing property}. For each RKHS, the kernel $k$ is unique and for every kernel $k$ there exists an RKHS with $k$ as its kernel. Due to this one-to-one relationship we shall write $\calH =\calH_{k}$ and use $\langle\cdot,\cdot\rangle_{k}$ and $\norm{\cdot}_{k}$ to denote the inner product and norm, respectively. 

\begin{example}\label{exp:SE_IMQ}
	Let $\calX,\calY$ be Hilbert spaces and $T\in L(\calX,\calY)$. Then two examples of kernels are the Squared Exponential-$T$ (SE-$T$) and Inverse Multi Quadric-$T$ (IMQ-$T$) defined
	\begin{align*}
		k_{\emph{SE-}T}(x,y) & = e^{-\frac{1}{2}\norm{Tx-Ty}_{\calY}^{2}}\\
		k_{\emph{IMQ-}T}(x,y) & = (\norm{Tx-Ty}_{\calY}^{2}+1)^{-1/2}.
	\end{align*}
\end{example}
The notion of RKHS can be readily generalised from a space of functions mapping from $\calX$ to $\bbR$ to a space of functions mapping from $\calX$ to $\calX$ through the construction of operator-valued kernels \citep{Carmeli2010,Carmeli2006,Micchelli2005,Kadri2016}. A function $K\colon\calX\times\calX\rightarrow L(\calX)$ is an operator-valued kernel if $K(x,y) = K(y,x)^{*}\:\forall x,y\in\calX$ where $K(y,x)^{*}$ is the adjoint of $K(y,x)$, and if for every $n\in\bbN$ and $\{x_{i},y_{i}\}_{i=1}^{n}\subset\calX\times\calX$, the matrix $\left[\langle K(x_{i},x_{j})y_{i},y_{j}\rangle_{\calX}\right]_{i,j}$ is non-negative definite. The RKHS $\calH_{K}$ associated to $K$, is the unique Hilbert space of functions mapping from $\calX$ to $\calX$ which satisfies (i)\ $K(x,\cdot)y\in\calH_{K}\:\forall x,y\in\calX$ and (ii)\ $\langle f,K(x,\cdot)y\rangle_{K} = \langle f(x),y\rangle_{\calX}\:\forall f\in\calH_{K},x,y\in\calX$. 

An operator-valued kernel on $\calX$ can be easily constructed from a scalar kernel $k:\calX\times\calX \rightarrow \bbR$. Indeed, setting $K(x,y) = k(x,y)I_{\calX}$ where $I_{\calX}$ is the identity operator on $\calX$ satisfies the requirements indicated above. Given an operator-valued kernel $K$ of this form, let $f\in\calH_{K}$, the associated RKHS, and let $\{e_{i}\}_{i=1}^{\infty}$ be any orthonormal basis of $\calX$. Then the inner product on $\calH_{K}$ satisfies $\langle f,g\rangle_{K} = \sum_{i=1}^{\infty}\langle f_{i},g_{i}\rangle_{k}$ where $f = \sum_{i=1}^{\infty}e_{i}f_{i}, g = \sum_{i=1}^{\infty}e_{i}g_{i}$ with $f_{i},g_{i}\in\calH_{k}\:\forall i\in\bbN$ see \citet[Example 5]{Carmeli2010} or \citet[Example 6.5]{Paulsen2016}, and this is independent of the choice of basis. Therefore $\calH_{K}$ is a countable product of $\calH_{k}$ with norm $\norm{\cdot}_{K}$ given by $\norm{f}_{K}^{2} = \sum_{i=1}^{\infty}\norm{f_{i}}_{k}^{2}$.  

\subsection{Probability measures on Hilbert spaces}

Let $\calX$ be a separable Hilbert space, meaning a Hilbert space that contains a countable, dense subset, for example $L^{2}([0,1]^{d})$ for $d\in\bbN$. We are interested in probability measures on such spaces and the most common and easiest to use measures are Gaussian measures. Denote by $L^{+}_{1}(\calX)$ the space of symmetric, positive definite, trace class linear operators on $\calX$. Given $m\in\calX$ and $C\in L^{+}_{1}(\calX)$, the Gaussian measure with mean $m$ and covariance operator $C$, denoted $N_{m,C}$, is the unique probability measure on $\calX$ whose pushforward under the map $l(\cdot) = \langle y, \cdot\rangle_{\mathcal{X}}$ is Gaussian with mean $\langle m,y\rangle_{\calX}$ and variance $\langle Cy,y\rangle_{\calX}$, for all $y \in \calX$. For each Gaussian measure there exists a corresponding $m,C$ and for each $m\in\calX,C\in L^{+}_{1}(\calX)$ there exists a corresponding Gaussian measure. When $m=0$ we write $N_{m,C} = N_{C}$, for simplicity. Gaussian measures can be naturally identified with Gaussian processes \citep{Rasmussen2006,Rajput1972Lp}, which makes them highly interesting objects of study from the point of view of machine learning applications. For further details regarding Gaussian measures on Hilbert spaces see \citep{DaPrato2006,Maniglia2004,Bogachev1998}.

In this paper, we consider \textit{Gibbs} measures which is a class of measures strictly larger than just the Gaussian measures.

\begin{definition}\label{def:gibbs}
Call a measure $P\in\calP(\calX)$ a \emph{Gibbs measure} with respect to $N_{C}$ if the Radon-Nikodym derivative $\frac{dP}{dN_{C}}$ exists on $\calX$.  
\end{definition}
The main examples of Gibbs measures we shall consider in this work arise from solutions of stochastic differential equations (SDEs). Section \ref{sec:numerics} contains a worked example. The terminology Gibbs measure is used as it is standard in the stochastic partial differential equation literature from which many results are used, see \citet[Chapter 11]{DaPrato2006}.

\subsection{Derivatives of Hilbert space valued functions}\label{subsec:derivs}
Since our Stein operator will involve derivatives some elements of the theory of differentiation in Hilbert spaces must be introduced. In particular, one needs to first be introduced to the notion of a Fr\'echet derivative. 

Let $\calX$ and $\calY$ be Hilbert spaces.  Given $F\colon\calX\rightarrow \calY$, and $x \in \calX$, the Fr\'echet derivative (if it exists) of $F$ at $x$ is the function $DF\colon\calX\rightarrow L(\calX, \calY)$ satisfying
\begin{align*}
    	\lim_{\norm{h}_{\calX}\rightarrow 0} \frac{\norm{F(x+h) - F(x) -  DF(x)[h] }_{\calY}}{\norm{h}_{\calX}} = 0.
\end{align*}
If $\calY = \mathbb{R}$, identify $DF(x)$ with an element in $\calX$,  which we also denote by $DF(x)$, through the Riesz representation theorem
\begin{align*}
    \langle y, DF(x)\rangle_{\calX}= DF(x)[y]\:\forall x,y\in\calX.
\end{align*}
	 
Given a function $F\colon\calX\times \calX \rightarrow \mathbb{R}$ define $D_1 F(x, y)$ and $D_2 F(x,y)$ to be the partial Fr\'echet derivatives of $F$ at $x, y \in \calX$ with respect to the first and second variables, respectively, so $D_{1}F(x,y) = D(F(\cdot,y))(x)$ and $D_{2}F(x,y) = D(F(x,\cdot))(y)$. Again we identify them as elements of $\calX$ for each $x,y \in \calX$. Similarly, define the mixed partial Fr\'echet derivatives as $D_{2}D_{1}F(x,y) = D(D_{1}F(x,\cdot))(y)\in L(\calX)$ with analogous expressions for $D_{i}D_{j}F(x,y)\:i,j\in\{1,2\}$.

For $\alpha \in\bbN$ define $C_{b}^{(\alpha,\alpha)}(\calX\times\calX)$ as the space of functions $F\colon\calX\times\calX\rightarrow\bbR$ that have bounded, continuous partial Fr\'{e}chet derivatives up to order $\alpha$ on each argument. For example, if $\alpha = 1$ then $F\in C_{b}^{(1,1)}(\calX\times\calX)$ means $D_{1}^{i}D_{2}^{j}F$ is continuous and $\sup_{x,y\in\calX}\norm{D_{1}^{i}D_{2}^{j}F(x,y)}<\infty$ for $i,j\in\{0,1\}$, where the norm is appropriate for the order of derivative taken, so if $i = 1,j=0$ then the norm would be $\norm{\cdot}_{L(\calX,\bbR)}$.

For $C\in L^{+}_{1}(\calX)$ and a Hilbert space $\calY$ let $L^{2}(\calX,\calY;N_{C})$ denote the set of (equivalence classes of) functions $f\colon\calX\rightarrow\calY$ such that 
$\norm{f}_{L^{2}(\calX,\calY;N_{C})}^{2} = \int_{\calX}\norm{f(x)}_{\calY}^{2}dN_{C}(x) < \infty$. When $\calY = \bbR$ we will simply write $L^{2}(\calX;N_{C})$. Other $L^{p}$ spaces are defined similarly for other values of $p$.
The Malliavin-Sobolev space $W^{1,2}_{C}(\calX)$ is the subspace of $L^{2}(\calX;N_{C})$ such that the Malliavin derivative $C^{1/2}Df$ has finite $L^{2}(\calX,\calX;N_{C})$-norm.  Equipped with the inner product 
$$
\langle f, g\rangle_{W^{1,2}_C(\calX)} := \langle f, g\rangle_{L^{2}(\calX;N_{C})} + \langle C^{1/2}Df, C^{1/2}Dg\rangle_{L^{2}(\calX,\calX; N_{C})}$$ 
this defines a Hilbert space with Hilbertian norm $\lVert \cdot \rVert_{W^{1,2}_C(\calX)}$. This Malliavin-Sobolev inner product is analogous to the Sobolev inner product on $\bbR^{d}$ as it also involves $L^{2}$ inner products for the functions and their derivatives. It will afford us the ability to use integration by parts results critical to proving the main theorems much like how integration by parts results are typical for Sobolev spaces in finite dimensions. See \citet[Chapter 5]{Bogachev1998} for more discussion.

\section{Kernel Stein discrepancy for Gibbs measures}\label{sec:KSD_for_GPs}

In this section, a formulation of the kernel Stein discrepancy (KSD) for Gibbs measures on separable Hilbert spaces is presented,  generalizing the finite dimensional construction.   More specifically, given a target Gibbs measure $P$ and a candidate probability measure $Q$ on a separable Hilbert space $\calX$, we wish to define a discrepancy which does not require explicit knowledge of the normalisation constant of $P$, and only requires expectations with respect to $Q$.  The following assumptions shall be made on $\calX,P,Q$ before we formulate the discrepancy. 

\begin{assumption}\label{ass:X}
$\calX$ is a separable Hilbert space. 
\end{assumption}

\begin{remark}\label{rem:X_dim}
We focus on the case where $\calX$ is infinite dimensional in discussions and numerics since it is the case where novelty is provided. Also, for notational convenience all sums over the basis elements of $\calX$ are to infinity. To recover the finite dimensional version of the results simply replace the infinite sums with sums over each dimension of $\calX$. The requirement that the base space is a Hilbert space can preclude the use of some common compact spaces such as $[0,1]$, which is a limitation of our approach.
\end{remark}

\begin{assumption}
\label{ass:P_Q_assumptions}
	$P,Q\in\calP(\calX)$ and the target measure $P$ is a Gibbs measure, see Definition \ref{def:gibbs}, with $\frac{dP}{dN_{C}} \propto\exp(-U)$, such that
	\begin{equation}
	\label{eq:Q_conditions}
	\mathbb{E}_{Q}[\lVert X\rVert_{\calX}^{2}] , \mathbb{E}_{Q}[\lVert CD U(X)\rVert_{\calX} ] < \infty,
	\end{equation}
	and 
	\begin{equation}
	\label{eq:P_conditions}
		e^{-\frac{U(\cdot)}{2}} \in W^{1,2}_C(\calX), \mathbb{E}_{N_C}[\lVert C^{1/2}DU(X)\rVert_{X}^2] < \infty.
	\end{equation}
\end{assumption}

\begin{remark}
    The use of a Gaussian base measure for the target measure $P$ is essential to be able to use results in the literature related to infinite dimensional Langevin diffusions and measure equations \cite{Albeverio1999uniqueness,albeverio1991stochastic}. This is because the invariant measures of such diffusions have Gaussian base measures. Our results do not apply when the base measure is heavier tailed.
\end{remark}

The next assumptions impose regularity conditions on the kernel used in the formulation of the kernel Stein discrepancy. There are two versions of the assumptions, the first for when we employ a non-vectorised Stein operator and the second for when we employ a vectorised Stein operator. As discussed in Section \ref{subsec:SD_intro} the vectorisation process reduces the number of derivatives used in the Stein operator and operates on vector-valued functions, therefore the corresponding assumptions involve one less order of derivatives and an operator-valued kernel. 

\begin{assumption}
\label{ass:nonvec_k_ass}
The function $k\colon\calX\times\calX\rightarrow \bbR$ is a kernel satisfying $k\in C_{b}^{(2,2)}(\calX\times\calX)$.
\end{assumption}

\begin{assumption}\label{ass:vec_k_ass}
The function $k\colon\calX\times\calX\rightarrow \bbR$ is a kernel satisfying $k\in C_{b}^{(1,1)}(\calX\times\calX)$ and set $K(x,y) \coloneqq k(x,y)I_{\calX}$.
\end{assumption}

\begin{remark}
These two  assumptions on the regularity of $k$ are analogous to the regularity conditions imposed on $k$ in the finite dimensional case \citep{Gorham2017}. The requirement for two derivatives in Assumption \ref{ass:nonvec_k_ass} implies that the corresponding RKHS will have elements that are smoother than in the case of Assumption \ref{ass:vec_k_ass}. Later, in order to prove the novel Fourier representation of KSD, we will additionally require that the kernels be Fourier transforms of measures and hence translation invariant.
\end{remark}

\subsection{Formulating KSD using the generator method}

As discussed in Section \ref{subsec:SD_intro}, one of the most popular approaches to constructing Stein operators and Stein discrepancies is the generator method. This is the approach we take too. To this end, 
a stationary Markov process whose invariant measure is given by $P$ must be identified. The link between Stein's method and the construction of an associated Markov process has been well-known since \citet{Barbour1990}, where the construction of an Ornstein-Uhlenbeck process in a Banach space and associated infinitesimal generator is employed to quantify the error introduced by a functional diffusion approximation. In Section  \ref{subsec:SD_intro} it was noted that for the finite-dimensional context the overdamped Langevin diffusion described by the SDE \eqref{eq:langevin_sde} satisfies our requirements in the finite dimensional case. An analogous process exists that is Hilbert-valued and is appropriate for our purposes. This is shown in the following result taken from \citet[Theorem 4.1]{Bogachev1995regularity} and \citet[Theorem 6.10]{albeverio1991stochastic}.
\begin{proposition}\label{prop:diffusion}
    Suppose that Assumptions \ref{ass:X} and \ref{ass:P_Q_assumptions} hold for $Q=P\propto\exp(-U)N_C$, then there exists a $\calX$-valued Wiener process $B$ with covariance operator $C$ such that the stochastic differential equation
    \begin{equation}
    \label{eq:diffusion}
        \diff X_t = -\left(X_t + CDU(X_t)\right)\diff t  + \sqrt{2}\diff B_t,
    \end{equation}
    admits a weak solution $(X_t)_{t\geq 0}$ which is a $P$-symmetric diffusion process with invariant measure $P$.  
\end{proposition}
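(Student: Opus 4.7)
The statement is extracted directly from \citet[Theorem 4.1]{Bogachev1995regularity} and \citet[Theorem 6.10]{albeverio1991stochastic}, so my plan is to verify that Assumptions \ref{ass:X}--\ref{ass:P_Q_assumptions} imply the hypotheses of those theorems and then read off the SDE representation. The natural route goes through the theory of symmetric Dirichlet forms on the separable Hilbert space $\calX$.

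First, I would introduce the pre-Dirichlet form
\[ \calE(f,g) \coloneqq \int_{\calX} \langle C^{1/2}Df(x), C^{1/2}Dg(x)\rangle_{\calX}\, dP(x), \]
defined initially on a dense set of smooth cylinder functions. A formal computation, combining the Cameron--Martin integration-by-parts formula for $N_C$ with the density $e^{-U}/Z$, produces the symmetric pre-generator
\[ Lf(x) = \Tr(CD^{2}f(x)) - \langle x + CDU(x), Df(x)\rangle_{\calX}, \]
which is exactly the generator one would write down for \eqref{eq:diffusion}. Equivalently, this computation identifies the logarithmic gradient of $P$ along the Cameron--Martin directions $C^{1/2}\calX$ as the vector field $-x - CDU(x)$.

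Second, I would establish closability of $\calE$ on $L^{2}(\calX;P)$ together with quasi-regularity of its closure. This is where Assumption \ref{ass:P_Q_assumptions} enters precisely: the conditions $e^{-U/2}\in W^{1,2}_{C}(\calX)$ and $\bbE_{N_{C}}[\norm{C^{1/2}DU(X)}_{\calX}^{2}]<\infty$ are exactly the hypotheses under which \citet[Theorem 4.1]{Bogachev1995regularity} yields a closable, local, quasi-regular symmetric Dirichlet form. The Ma--R\"ockner correspondence then delivers a $P$-symmetric diffusion process $(X_t)_{t\geq 0}$ with invariant measure $P$ whose generator extends $L$. To pass from this process to the SDE \eqref{eq:diffusion}, I would apply the Fukushima decomposition as in \citet[Theorem 6.10]{albeverio1991stochastic}: the martingale part of the coordinate process is represented as $\sqrt{2}W_t$ for an $\calX$-valued Wiener process $W$ with covariance $C$, while the bounded-variation part is $-\int_0^t (X_s + CDU(X_s))\,ds$, whose well-definedness under $P$ is guaranteed by the integrability conditions in \eqref{eq:Q_conditions} applied with $Q=P$.

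The main obstacle I anticipate is the quasi-regularity step. Because $\calX$ is not locally compact, one cannot build $\calE$-nests of compacts by classical means; instead one must exploit the compact embedding of the Cameron--Martin space of $N_C$ into $\calX$ together with tightness of $P$ (inherited from the Radon property of $N_C$). Rather than redevelop this machinery from scratch, my plan is simply to check that our Sobolev and moment hypotheses on $U$ match those already assumed in Bogachev's theorem, so that the closability, quasi-regularity, and locality statements can be imported directly and the SDE representation then extracted by Fukushima decomposition.
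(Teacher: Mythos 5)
Your proposal is correct and matches the paper's treatment: the paper gives no independent proof of this proposition, stating it as a result imported directly from \citet[Theorem 4.1]{Bogachev1995regularity} and \citet[Theorem 6.10]{albeverio1991stochastic}, with Assumption \ref{ass:P_Q_assumptions} (in particular $e^{-U/2}\in W^{1,2}_{C}(\calX)$ and the moment conditions) serving exactly to meet the hypotheses of those theorems. Your sketch of the underlying machinery --- closability and quasi-regularity of the Dirichlet form $\int\langle C^{1/2}Df,C^{1/2}Dg\rangle_{\calX}\,dP$, the Ma--R\"ockner correspondence, and the Fukushima decomposition yielding the SDE representation --- is an accurate account of how those cited results are obtained, so you are simply unpacking the same citation the paper relies on.
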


This diffusion is known as the pre-conditioned Langevin and has been studied in the context of sampling on function spaces \citep{Hairer2007}. The next step in the generator method is to identify the generator of the Markov process. This generator will then be used as our Stein operator. Define the operator $\calA$
\begin{align}
    \calA f(x)  = \text{Tr}(CD^{2}f(x)) - \langle Df(x),x+CDU(x)\rangle_{\calX}.\label{eq:grad_gen}
\end{align}
The domain of this operator has not yet been specified but the following result \citep[Remark 4.4]{Albeverio1999uniqueness} assures us that $\calA$ coincides with the generator of \eqref{eq:diffusion} on $\calF C_{b}^{\infty}(\calX)$,  defined 
\begin{align*}
\calF C_{b}^{\infty}(\calX) = \lbrace f \, | \, f(\cdot) = \phi(\langle l_1,\cdot\rangle_{\calX}, \ldots,\langle l_n, \cdot\rangle_{\calX}), \:l_1, \ldots, l_n \in \calX, \phi\in C_b^{\infty}(\mathbb{R}^n), n \in \mathbb{N}\rbrace, 
\end{align*}
where $C_{b}^{\infty}(\bbR^{n})$ is the space of bounded, infinitely differentiable functions from $\bbR^{n}$ to $\bbR$. 
\begin{proposition}\label{prop:generator}
   Under Assumptions \ref{ass:X} and \ref{ass:P_Q_assumptions} the space $\calF C_b^{\infty}(\calX)$ lies within the domain of the generator of \eqref{eq:diffusion} and the generator takes the form \eqref{eq:grad_gen} on $\calF C_b^{\infty}(\calX)$. 
\end{proposition}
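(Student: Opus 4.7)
The plan is to apply It\^o's formula to $f(X_t)$ for the Hilbert-valued diffusion \eqref{eq:diffusion} and read off the drift as $\calA f$, then conclude via the standard characterisation of the generator as the strong $L^2(P)$-limit of $t^{-1}(T_t f - f)$ as $t \to 0$. The cylindrical structure of $\calF C_b^\infty(\calX)$ lets me reduce much of the infinite-dimensional analysis to a finite-dimensional computation that only involves the law of $(\langle l_1, X_t\rangle_{\calX}, \ldots, \langle l_n, X_t\rangle_{\calX})$.

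For $f(x) = \phi(\langle l_1,x\rangle_{\calX}, \ldots, \langle l_n,x\rangle_{\calX})$ with $\phi \in C_b^{\infty}(\bbR^n)$, I would first compute the Fr\'echet derivatives explicitly:
\begin{align*}
Df(x) &= \sum_{i=1}^n (\partial_i \phi)(\langle l_1,x\rangle_{\calX}, \ldots, \langle l_n, x\rangle_{\calX})\, l_i, \\
D^2 f(x) &= \sum_{i,j=1}^n (\partial_{ij}\phi)(\langle l_1,x\rangle_{\calX}, \ldots, \langle l_n, x\rangle_{\calX})\, l_i \otimes l_j,
\end{align*}
both bounded uniformly in $x$ by boundedness of $\phi$ and its derivatives together with $\norm{l_i}_{\calX}$. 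Next I would apply It\^o's formula for Hilbert-valued SDEs to $f(X_t)$: since the driving noise has covariance $C$ and diffusion coefficient $\sqrt{2}$, the quadratic-variation term collapses to $\sum_{i,j}(\partial_{ij}\phi)\langle Cl_i,l_j\rangle_{\calX} = \Tr(CD^2 f(X_t))$, while the drift $-(X_t + CDU(X_t))$ contributes $-\langle Df(X_t), X_t + CDU(X_t)\rangle_{\calX}$. Combining these,
\begin{align*}
f(X_t) - f(X_0) = \int_0^t \calA f(X_s)\, \diff s + M_t,
\end{align*}
with $M_t$ a local martingale and $\calA f$ exactly as in \eqref{eq:grad_gen}. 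Taking $\bbP_x$-expectation, differentiating at $t=0$, and using semigroup continuity then identifies $\calA f$ with the generator evaluated at $f$.

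The main obstacle will be justifying It\^o's formula and the semigroup convergence rigorously at the level of the mild/martingale solution furnished by Proposition \ref{prop:diffusion}: I would need finite second moments of $X_t$, integrability of $CDU(X_t)$ along trajectories so that the drift integral is well defined, and enough integrability of $\calA f$ under $P$ to promote $M_t$ to a true martingale and to pass to the $L^2(P)$ limit as $t \to 0$. These requirements are exactly what Assumption \ref{ass:P_Q_assumptions} is engineered to supply: the conditions $e^{-U/2} \in W_C^{1,2}(\calX)$ and $\bbE_{N_C}[\norm{C^{1/2}DU(X)}_{\calX}^2] < \infty$ underpin the Dirichlet-form construction used in Proposition \ref{prop:diffusion}, while the moment bounds on $X$ and $CDU(X)$ transfer from $N_C$ to $P$ via the Gibbs density and control the drift term. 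Once those integrability points are dispatched, the cylindrical reduction together with the finite-dimensional It\^o computation above completes the identification of $\calA$ as the generator on $\calF C_b^\infty(\calX)$.
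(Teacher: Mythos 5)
The paper gives no proof of this proposition of its own: it is imported directly from \citep[Remark 4.4]{Albeverio1999uniqueness}, where the identification is carried out analytically in the Dirichlet-form framework. There one shows, by integrating by parts against the logarithmic derivative $\beta^{P}_{Ch}(x) = -\langle h, x+CDU(x)\rangle_{\calX}$ (exactly the device recalled in Proposition \ref{prop:IBP}), that $\int (\calA f)\, g\, dP = -\int \langle C^{1/2}Df, C^{1/2}Dg\rangle_{\calX}\, dP$ for cylindrical $f,g$; by the definition of the generator of the form this places $\calF C_b^{\infty}(\calX)$ in the domain with $Lf = \calA f$, with no stochastic calculus required. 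Your route through It\^o's formula and the $L^2(P)$-characterisation of the generator is therefore genuinely different, and your cylindrical reduction, the formulae for $Df$ and $D^2f$, and the drift and quadratic-variation bookkeeping are all correct.

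The gap is in the step you yourself flag as ``the main obstacle,'' and it is more serious than a routine integrability check. The process supplied by Proposition \ref{prop:diffusion} is a $P$-symmetric diffusion \emph{constructed from} the Dirichlet form; the rigorous substitute for It\^o's formula for such a Hunt process is the Fukushima decomposition $f(X_t)-f(X_0)=M_t^{[f]}+N_t^{[f]}$, and identifying the zero-energy part $N^{[f]}$ with $\int_0^t \calA f(X_s)\,ds$ is essentially equivalent to already knowing that $f$ lies in the domain of the generator with $Lf=\calA f$ --- the statement you are trying to prove. To avoid this circularity you would have to independently verify that the weak solution is a genuine semimartingale solution with pathwise-integrable drift, i.e.\ $\int_0^t \lVert X_s + CDU(X_s)\rVert_{\calX}\,ds<\infty$ almost surely, which Proposition \ref{prop:diffusion} as stated does not hand you. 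The deferred analytic steps also need to be supplied explicitly: you need $\calA f\in L^{2}(\calX;P)$ (the term $\langle Df(x), CDU(x)\rangle_{\calX}$ is square-integrable because $e^{-U/2}\in W^{1,2}_C(\calX)$ yields $\bbE_{P}[\lVert C^{1/2}DU(X)\rVert_{\calX}^{2}]<\infty$, but you must separately secure $\bbE_{P}[\lVert X\rVert_{\calX}^{2}]<\infty$, since Assumption \ref{ass:P_Q_assumptions} states the second-moment bound only for the candidate measure $Q$), together with strong continuity of the $L^{2}(P)$-semigroup so that $t^{-1}\int_0^t\lVert T_s\calA f-\calA f\rVert_{L^2(\calX;P)}\,ds\to 0$. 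The integration-by-parts route sidesteps every one of these pathwise issues, which is why the cited reference takes it.
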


The reason that $\calF C_{b}^{\infty}(\calX)$ is used to describe the action of the operator is that it is a large function space, being dense in many other spaces of interest. For the rest of this paper we use the form of the generator on $\calF C_{b}^{\infty}(\calX)$ and apply it to elements of an RKHS to construct the discrepancy. 

The operator \eqref{eq:grad_gen} is the generator of the diffusion \eqref{eq:diffusion}, which is an infinite dimensional, pre-conditioned analogue of the overdamped Langevin diffusion \eqref{eq:langevin_sde}. Therefore, it is natural to ask how the Langevin-Stein operator \eqref{eq:A_fd}, defined on $\bbR^{d}$, relates to \eqref{eq:grad_gen} when $\calX = \bbR^{d}$. The following example demonstrates the relation.

\begin{example}
Let $\calX = \bbR^{d}$ and $\calA_{L}$ be the Langevin-Stein operator \eqref{eq:A_fd} and $\calA$ the operator \eqref{eq:grad_gen} obtained as the generator of the pre-conditioned Langevin diffusion \eqref{eq:diffusion}. Let $P\in\calP(\calX)$ with a positive, differentiable density $p$ with respect to the Lebesgue measure. Let $C\in\bbR^{d\times d}$ be a covariance matrix and $N_{C}$ the be corresponding zero mean multivariate Gaussian on $\calX$. Then $\frac{dP}{dN_{C}}\propto\exp(-U)N_{C}$ with $U(x) = -\log p(x)-\frac{1}{2}\langle x,C^{-1}x\rangle_{\calX}$. As $U$ is now written in terms of $p$, the $x+CDU(x)$ term in $\calA$ can now be examined
\begin{align*}
    x+CDU(x) = x -C\nabla\log p(x) - C(C^{-1}x) = -C\nabla\log p(x).
\end{align*}
This shows that $-(x+CDU(x))$ plays the role of a pre-conditioned score function. When substituted into $\calA$ this gives $\calA f(x) = \Tr[CD^{2}f(x)]+\langle \nabla\log p(x), CDf(x)\rangle_{\calX}$ which is a pre-conditioned version of $\calA_{L}$, see \eqref{eq:A_fd}. This matches the analogy of how the initial diffusion for $\calA$ was a pre-conditioned version of the overdamped Langevin, the initial diffusion for $\calA_{L}$. In other words, the $x+CDU(x)$ term in $\calA$ represents the pre-conditioned score function of the density of $P$ with respect to the Lebesgue measure, $p$, whereas $\calA_{L}$ involves the non-preconditioned score function of $p$.
\end{example}

As outlined in Section \ref{sec:KSD_intro} it is helpful for theoretical and practical reasons to investigate a vectorised version of the operator, where the derivatives in \eqref{eq:grad_gen} are replaced with vector-valued functions, as is done in the statistical machine learning and computational statistics literature.

\begin{definition}[Stein operator]\label{def:non-vectorised}
Call the operator defined by
\begin{align}
\label{eq:A_nonvectorised_operator}
    \mathcal{A} f(x) = \Tr[C D^2 f(x)] - \langle Df(x) , x + C DU(x)\rangle_{\calX} \quad \forall x \in \calX, f \in \calH_k,
\end{align}
our Stein operator and the operator defined by 
\begin{align}
\label{eq:A_operator}
	\mathcal{A}_v F(x) = \Tr[C D F(x)] - \langle F(x) , x + C DU(x)\rangle_{\calX} \quad \forall x \in \calX, F \in \calH_K,
\end{align}
our vectorised Stein operator.
\end{definition}

The vectorisation of the operator replaced $D f$ with $F$ which shows that each component of the function $F$ is taking the place of each component of the derivative of $f$. However, $F$ does not have to be equal to the derivative of some function and hence the vectorisation provides a generalisation of the non-vectorised operator. We study both cases because the non-vectorised operator appears often in the probability literature and the vectorised one appears often in the computational statistics literature.

When $P$ is a Gaussian, meaning $U=0$, the operator $\calA$ has already been used to form a Stein discrepancy in infinite dimensions \citep{Shih2011,Bourguin2020}. Now armed with a Stein operator the kernel Stein discrepancy can be defined. Since the vectorised operator acts on functions that map from $\calX$ to $\calX$ the KSD that uses $\calAv$ involves $\calH_{K}$ instead of $\calH_{k}$.

\begin{definition}\label{def:ksd}
For a real-valued kernel $k$ on $\calX$ and for $K=kI_{\calX}$, the kernel Stein discrepancy (KSD) between probability measures $Q,P\in\calP(\calX)$ using $\calA$ and $\calAv$ are defined
\begin{align*}
     \text{KSD}_{\calA,k}(Q,P) & \coloneqq \sup_{\substack{f \in \calH_k, \\ \norm{f}_{k}\leq 1}}\left\lvert\bbE_{Q}[\calA f(X)]\right\rvert\\
    \text{KSD}_{\calA_v,K}(Q,P)& \coloneqq \sup_{\substack{F \in \calH_K, \\ \norm{F}_{K}\leq 1}}\left\lvert\bbE_{Q}[\calAv F(X)]\right\rvert,
\end{align*}
respectively.
\end{definition}

Since the operators $\mathcal{A}$ and $\mathcal{A}_v$ are now defined on the reproducing kernel Hilbert spaces $\calH_k$ and $\calH_K$, respectively, we need to ensure our assumptions result in well defined actions of $\calA$ and $\calAv$. 

\begin{lemma} \label{lemm:well_defined} 
Suppose Assumptions \ref{ass:X} and \ref{ass:P_Q_assumptions} hold. If $k$ satisfies Assumption \ref{ass:nonvec_k_ass} then \sloppy{$\emph{KSD}_{\calA,k}(Q,P)$} is well-defined and if $k$ satisfies Assumption \ref{ass:vec_k_ass} then $\emph{KSD}_{\calAv,K}(Q,P)$ is well-defined. 
\end{lemma}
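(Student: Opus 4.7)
The plan is to show that the suprema in Definition \ref{def:ksd} are finite by proving a uniform bound of the form $\bbE_Q[\lvert \calA f(X)\rvert] \leq M \norm{f}_k$ (respectively $\bbE_Q[\lvert \calAv F(X)\rvert] \leq M \norm{F}_K$) for a constant $M$ independent of $f$ (resp.\ $F$). Combined with measurability of $\calA f$ and $\calAv F$, which follows from continuity under Assumption \ref{ass:nonvec_k_ass} (resp.\ Assumption \ref{ass:vec_k_ass}), this shows each integrand is well-defined $Q$-a.s.\ and the supremum is finite.

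First I would use the reproducing property to express Fr\'echet derivatives of $f\in\calH_k$ in terms of derivatives of the kernel. A standard RKHS calculation yields, for $x,h,h_1,h_2\in\calX$,
\begin{align*}
    Df(x)[h] = \langle f, D_2 k(\cdot, x)[h]\rangle_k, \qquad D^2f(x)[h_1,h_2] = \langle f, D_2^2 k(\cdot, x)[h_1,h_2]\rangle_k.
\end{align*}
Cauchy--Schwarz in $\calH_k$ together with the identities $\norm{D_2 k(\cdot,x)[h]}_k^2 = D_1 D_2 k(x,x)[h,h]$ and the analogous fourth-order identity then give
\begin{align*}
    \norm{Df(x)}_{\calX} \leq \norm{f}_k \sqrt{\norm{D_1 D_2 k(x,x)}_{\op}}, \qquad \norm{D^2f(x)}_{\op} \leq \norm{f}_k \sqrt{\norm{D_1^2 D_2^2 k(x,x)}_{\op}}.
\end{align*}
Assumption \ref{ass:nonvec_k_ass} forces both right-hand sides to be bounded uniformly in $x$.

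Next I would bound $\lvert \calA f(x)\rvert$ pointwise. The trace term is controlled by H\"older's inequality for Schatten norms, $\lvert \Tr(CD^2 f(x))\rvert \leq \Tr(C)\,\norm{D^2f(x)}_{\op}$, which is finite since $C\in L^+_1(\calX)$. The drift term is controlled by Cauchy--Schwarz in $\calX$: $\lvert\langle Df(x), x+CDU(x)\rangle_{\calX}\rvert \leq \norm{Df(x)}_{\calX}(\norm{x}_{\calX} + \norm{CDU(x)}_{\calX})$. Combining yields $\lvert \calA f(x)\rvert \leq M_0 \norm{f}_k (1 + \norm{x}_{\calX} + \norm{CDU(x)}_{\calX})$ for a constant $M_0$ depending only on $k$ and $C$. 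Taking expectation under $Q$, Assumption \ref{ass:P_Q_assumptions} gives $\bbE_Q[\norm{X}_{\calX}] \leq \bbE_Q[\norm{X}_{\calX}^2]^{1/2} < \infty$ by Jensen, and $\bbE_Q[\norm{CDU(X)}_{\calX}] < \infty$ directly, finishing the non-vectorised case. The vectorised case is analogous: the reproducing property for $K = kI_{\calX}$ yields $\norm{F(x)}_{\calX} \leq \norm{F}_K \sqrt{k(x,x)}$ and $\norm{DF(x)}_{\op} \leq \norm{F}_K \sqrt{\norm{D_1 D_2 k(x,x)}_{\op}}$, so only first derivatives of $k$ appear and Assumption \ref{ass:vec_k_ass} suffices.

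The main obstacle I anticipate is making the interchange of Fr\'echet differentiation with the $\calH_k$ inner product in the first step fully rigorous in the infinite-dimensional setting; the conclusion that $D_2 k(\cdot,x)[h]\in\calH_k$ with the stated norm identity requires a dominated-convergence argument on difference quotients using the uniform continuity of $D_1^i D_2^j k$ granted by $C_b^{(2,2)}$. I would invoke the standard differentiability theory for RKHSs (adapted from the Euclidean treatment in, e.g., Steinwart--Christmann) rather than redeveloping it from scratch; all remaining estimates are routine applications of Cauchy--Schwarz, the trace inequality, and the moment hypotheses on $Q$.
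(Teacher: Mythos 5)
Your proposal follows essentially the same route as the paper: reproduce the first and second Fr\'echet derivatives of $f$ (resp.\ $F$) through the kernel, apply Cauchy--Schwarz in $\calH_k$ together with the trace-class property of $C$ to get a bound of the form $M\norm{f}_k(1+\norm{x}_{\calX}+\norm{CDU(x)}_{\calX})$, and integrate using the moment conditions in Assumption \ref{ass:P_Q_assumptions}. The one step you propose to cite --- that $D_1k(x,\cdot)[h]$ and $D_1^2k(x,\cdot)[h_1,h_2]$ lie in $\calH_k$ and reproduce derivatives --- is not available off the shelf in infinite dimensions (the Steinwart--Christmann treatment is Euclidean); the paper proves it as Lemmas \ref{lem:derivative_in_RKHS}--\ref{lem:derivative_reproduce} via a weak-compactness argument on uniformly bounded difference quotients, which is exactly the dominated-convergence strategy you sketch, so your outline is sound but that step is the substantive technical content rather than a citation.
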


It is important to note at this stage that $\text{KSD}_{\mathcal{A},k}(Q,P)$ or $\text{KSD}_{\mathcal{A}_v,K}(Q,P)$ might be zero when $Q\neq P$ meaning it is not a separating discrepancy. Deriving conditions on $k$ to ensure that the two KSD formulations can separate measures is a central part of KSD theory and is addressed in Section \ref{sec:KSD_Tik}.

\subsection{Expectation formulation of KSD}
For the rest of this section we show how KSD can be rewritten in a way which makes it easily estimated in practical statistical tasks. Before proceeding recall the discussion in Remark \ref{rem:X_dim} which states that though infinite sums are used for convenience, and to emphasise that the main contribution lies in the infinite dimensional case, the results also hold when $\calX$ is finite dimensional. To recover the finite dimensional case one simply changes the sums to be over the dimension of $\calX$. To this end, consider the following operator acting on Fr\'{e}chet differentiable functions $f\colon\calX\rightarrow\bbR$
\begin{equation}\label{eq:operator_gamma}
    \Gamma f(x)=CDf(x)-(x+CDU(x))f(x).
\end{equation}
Fix an orthonormal basis $\{e_i\}_{i=1}^{\infty}$ of $\mathcal{X}$ and set $F_i(x)=\left<F(x),e_i\right>_{\calX}$. Then 
\begin{align}
    \mathcal{A}_vF(x) = \sum_{i=1}^{\infty}\left<\Gamma F(x),e_i\right>_{\mathcal{X}} = \sum_{i=1}^{\infty}\Gamma_{i}F_{i}(x),\label{eq:gamma_trace}
\end{align}
where $\Gamma_{i}f(x)\coloneqq \langle\Gamma f(x),e_i\rangle_{\mathcal{X}}$ for $f\colon\calX\rightarrow\bbR$. The expression \eqref{eq:gamma_trace} shows that $\calAv$ behaves similar to a trace norm, the $i$-th component of $F$ is being projected into the $i$-th direction. This interpretation helps obtain the next result. It is interesting that a trace norm interpretation of the vectorised Stein operator was noted in one of the first KSD papers \citep{Liu2016}.

\begin{theorem}\label{thm:KSD_OU}
Suppose Assumptions \ref{ass:X} and \ref{ass:P_Q_assumptions} hold. If $k$ satisfies Assumption \ref{ass:nonvec_k_ass} then 
\begin{align}
    \emph{KSD}_{\mathcal{A},k}(Q,P)^2 =  \bbE_{(X,X') \sim Q\times Q}[(\calA\otimes\calA)k(X,X')],\label{eq:KSD_double_int1}
\end{align}
and if $k$ satisfies Assumption \ref{ass:vec_k_ass} then
\begin{align}
    \emph{KSD}_{\mathcal{A}_v,k}(Q,P)^2 = \sum_{i=1}^{\infty}\bbE_{(X,X') \sim Q\times Q}[(\Gamma_i\otimes\Gamma_i)k(X,X')]\label{eq:KSD_double_int}.
\end{align}
\end{theorem}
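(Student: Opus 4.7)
The overall plan is to use the RKHS machinery to identify the supremum in $\emph{KSD}_{\calA,k}(Q,P)$ with the $\calH_k$-norm of a mean embedding, and then expand that norm as a double expectation via Bochner integration. First, under Assumption \ref{ass:nonvec_k_ass}, I would show that for each $x \in \calX$ the function $\xi_P(x,\cdot) := (\calA_1 k)(x,\cdot)$ lies in $\calH_k$, where $\calA_1$ denotes the Stein operator \eqref{eq:A_nonvectorised_operator} acting on the first argument of $k$. The key ingredient is the derivative reproducing property: when $k \in C_b^{(2,2)}(\calX\times\calX)$, for each $y \in \calX$ the scalar components of $D_2 k(\cdot,y)$ and $D_2^2 k(\cdot,y)$ against the basis $\{e_i\}$ lie in $\calH_k$ and reproduce the corresponding directional derivatives of any $f \in \calH_k$. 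Combined with the trace-class property of $C$, this yields, for every $f \in \calH_k$ with $\|f\|_k \leq 1$,
\begin{align*}
    \calA f(x) = \langle f,\, \xi_P(x,\cdot)\rangle_k.
\end{align*}

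Next, the integrability conditions \eqref{eq:Q_conditions} should imply that $x \mapsto \xi_P(x,\cdot)$ is Bochner-integrable in $\calH_k$ under $Q$, so that the mean embedding $\mu_Q := \bbE_Q[\xi_P(X,\cdot)] \in \calH_k$ exists and $\bbE_Q[\calA f(X)] = \langle f, \mu_Q\rangle_k$. Taking the supremum over the unit ball of $\calH_k$ gives $\emph{KSD}_{\calA,k}(Q,P) = \|\mu_Q\|_k$; squaring and pulling the inner product inside the expectation a second time via Bochner produces
\begin{align*}
\|\mu_Q\|_k^2 = \bbE_{(X,X') \sim Q\times Q}\,\langle \xi_P(X,\cdot),\, \xi_P(X',\cdot)\rangle_k.
\end{align*}
Applying the derivative reproducing property a third time, now with $\calA_2$ acting on the second argument of $k$, identifies this inner product with $(\calA_1 \calA_2)k(X,X') = (\calA\otimes\calA)k(X,X')$, which establishes \eqref{eq:KSD_double_int1}.

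For the vectorised statement the same argument is carried out inside $\calH_K$. Writing $F = \sum_i e_i F_i$ with $F_i \in \calH_k$, and using both the product decomposition $\|F\|_K^2 = \sum_i \|F_i\|_k^2$ and the identity $\calAv F = \sum_i \Gamma_i F_i$ from \eqref{eq:gamma_trace}, the squared supremum decouples into a Pythagorean sum over $i$ of squared $\calH_k$-norms of the scalar mean embeddings $\bbE_Q[(\Gamma_{1,i}\, k)(X,\cdot)]$, where $\Gamma_{1,i}$ indicates $\Gamma_i$ acting on the first argument of $k$. Running the same scalar reproducing argument coordinatewise then yields the series representation \eqref{eq:KSD_double_int}.

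The main obstacle is verifying, in infinite dimensions, that $\xi_P(x,\cdot)$ genuinely lies in $\calH_k$, since its construction involves the infinite trace $\emph{Tr}[CD_1^2 k(\cdot,x)] = \sum_i \lambda_i D_1^2 k(\cdot,x)[e_i,e_i]$ in the eigenbasis $(e_i,\lambda_i)$ of $C$. Absolute convergence of this series in $\|\cdot\|_k$ must be extracted from the bounded continuity of second derivatives encoded in $k \in C_b^{(2,2)}$, together with $C \in L^+_1(\calX)$; a parallel argument handles the Malliavin term $\langle CDU(x), D_1 k(\cdot,x)\rangle_\calX$ using $\bbE_Q[\|CDU(X)\|_\calX] < \infty$. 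Beyond this, the interchange of expectation, infinite sum, and RKHS inner product in both the non-vectorised and vectorised cases rests on the moment bounds in \eqref{eq:Q_conditions} and on the well-definedness already guaranteed by Lemma \ref{lemm:well_defined}, which provides precisely the integrability needed to legitimise pulling $\bbE_Q$ inside the inner product and the inner sum.
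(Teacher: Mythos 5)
Your proposal follows essentially the same route as the paper's proof: establish that the Stein-operator image of the kernel section lies in $\calH_k$ via derivative reproducing properties, use Bochner integrability under Assumption \ref{ass:P_Q_assumptions} to swap expectation with the inner product, identify the supremum with the norm of the mean embedding $\bbE_Q[\xi(X)]$, and expand; the vectorised case is likewise handled coordinatewise through the decomposition $\norm{F}_K^2 = \sum_i \norm{F_i}_k^2$, exactly as the paper does with $\xi_v(x) = \sum_i e_i\Gamma_i k(x,\cdot)$. The technical obstacle you flag (membership of the infinite trace series in $\calH_k$) is precisely what the paper's Lemmas \ref{lem:derivative_in_RKHS}--\ref{lem:derivative_reproduce} and Lemma \ref{lemm:well_defined} are devoted to, so the proposal is correct and matches the paper's argument.
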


This shows KSD can be written as a double expectation with respect to the candidate measure $Q$ for both the standard and vectorised case. The proof is largely the same as in finite dimensions \citep{Gorham2017,Chwialkowski2016} except that it needs to be established that the kernel can reproduce derivatives in infinite dimensions, which is a technical contribution we provide. Theorem \ref{thm:KSD_OU} has important practical implications as it means that KSD can be estimated as a $U$-statistic using only samples from $Q$. This has been known already to be the case for finite dimensional $\calX$ and it is important that this property still holds for infinite dimensional $\calX$. Section \ref{sec:numerics} outlines how KSD is estimated using these double expectation expressions. The right-hand sides of \eqref{eq:KSD_double_int1} and \eqref{eq:KSD_double_int}  may be expanded to give the following corollary. The proof is contained in the proof of Theorem \ref{thm:KSD_OU}.

\begin{corollary}\label{cor:stein_kernels}
Suppose Assumptions \ref{ass:X} and \ref{ass:P_Q_assumptions} hold. If $k$ satisfies Assumption \ref{ass:nonvec_k_ass} then 
\begin{align}
     \emph{KSD}_{\mathcal{A},k}(Q,P)^2 =  \bbE_{(X,X') \sim Q\times Q}[h(X,X')],\label{eq:nonvec_double_int}
\end{align}
where 
\begin{align*}
 h(x,x') & = \sum_{i,j=1}^{\infty}\lambda_{i}\lambda_{j}D_{2}^{2}D_{1}^{2}k(x,x')[e_{i},e_{i},e_{j},e_{j}] \\
 & - \sum_{i=1}^{\infty}\lambda_{i}D_{2}^{2}D_{1}k(x,x')[x+CDU(x),e_{i},e_{i}]\\
   &  - \sum_{i=1}^{\infty}\lambda_{i}D_{1}^{2}D_{2}k(x,x')[x'+CDU(x'),e_{i},e_{i}] \\
   & + D_{2}D_{1}k(x,x')[x+CDU(x),x'+CDU(x')].
\end{align*}
If $k$ satisfies Assumption \ref{ass:vec_k_ass} then
\begin{align}
     \emph{KSD}_{\calAv,K}(Q,P)^2 =  \bbE_{(X,X') \sim Q\times Q}[h_{v}(X,X')],\label{eq:vec_double_int}
\end{align}
where
\begin{align*}
	h_{v}(x,x')  & =   \emph{Tr}[C^{2} D_{2}D_{1}k(x,x')] - \langle D_{1}k(x,x'), Cx'+C^{2}DU(x')\rangle_{\calX} \\
	&\quad - \langle D_{2}k(x,x'), Cx+C^{2}DU(x)\rangle_{\calX} + k(x,x')\langle x+CDU(x),x'+CDU(x')\rangle_{\calX}.
\end{align*}
\end{corollary}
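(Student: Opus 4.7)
Since the corollary explicitly states that its proof is contained in the proof of Theorem \ref{thm:KSD_OU}, the plan is to take the abstract identities \eqref{eq:KSD_double_int1} and \eqref{eq:KSD_double_int} as starting points and expand the integrands $(\calA\otimes\calA)k(x,x')$ and $\sum_{i=1}^{\infty}(\Gamma_i\otimes\Gamma_i)k(x,x')$ by direct application of the operators from Definition \ref{def:non-vectorised} and \eqref{eq:operator_gamma}. I would fix an orthonormal basis $\{e_i\}_{i=1}^{\infty}$ of $\calX$ diagonalising $C$, with $Ce_i = \lambda_i e_i$; such a basis is available because $C\in L_1^+(\calX)$. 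In it, the trace term in $\calA$ reduces to $\Tr[CD^{2}f(x)]=\sum_{i=1}^{\infty}\lambda_i D^{2}f(x)[e_i, e_i]$, and the scalarised operator becomes $\Gamma_{i}f(x)=\lambda_i\langle Df(x), e_i\rangle_{\calX}-\langle x+CDU(x), e_i\rangle_{\calX}f(x)$.

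For the non-vectorised identity, I would first apply $\calA$ to the first argument of $k$, obtaining
$\calA^{(1)}k(x,x')=\sum_{i=1}^{\infty}\lambda_i D_{1}^{2}k(x,x')[e_i, e_i]-D_{1}k(x,x')[x+CDU(x)]$,
and then apply $\calA$ to the second argument of the result. The four summands of $h(x,x')$ correspond to the four cross-products between the trace and drift terms of $\calA$ acting successively on each variable. Symmetry of mixed Fr\'echet partials, valid under Assumption \ref{ass:nonvec_k_ass} ($k\in C_{b}^{(2,2)}$), then permits rewriting $D_{2}D_{1}^{2}k(x,x')[e_i, e_i, x'+CDU(x')]$ as $D_{1}^{2}D_{2}k(x,x')[x'+CDU(x'), e_i, e_i]$, yielding the stated form.

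For the vectorised identity I would expand each $\Gamma_{i}^{(2)}\Gamma_{i}^{(1)}k(x,x')$ into its four terms and sum over $i$. The four sums collapse using Parseval's identity and the cyclicity of the trace: the mixed drift/derivative cross-terms identify via $\sum_{i}\lambda_{i}\langle u, e_i\rangle_{\calX}\langle v, e_i\rangle_{\calX}=\langle u, Cv\rangle_{\calX}$ and $\sum_{i}\langle u, e_i\rangle_{\calX}\langle v, e_i\rangle_{\calX}=\langle u, v\rangle_{\calX}$, while the derivative-squared term gives $\sum_{i}\lambda_{i}^{2}\langle D_{2}D_{1}k(x,x')e_i, e_i\rangle_{\calX}=\Tr[C^{2}D_{2}D_{1}k(x,x')]$ using $C^{2}e_i=\lambda_{i}^{2}e_i$. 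Combining these four contributions produces $h_{v}$ exactly.

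The main obstacle is not the algebraic bookkeeping but the justification of interchanging infinite summation with Fr\'echet differentiation and with expectations under $Q$, together with absolute convergence of the resulting series. These issues are handled by Assumptions \ref{ass:nonvec_k_ass}--\ref{ass:vec_k_ass} (bounded, continuous mixed partials of $k$), the trace-class property of $C$ (so that $\sum_i\lambda_i<\infty$ absorbs the weights appearing in the trace expansions), and the moment bounds in Assumption \ref{ass:P_Q_assumptions}. Because these technical verifications are precisely those required to establish Theorem \ref{thm:KSD_OU}, the corollary itself amounts to reading off and organising the explicit integrands arising within that calculation.
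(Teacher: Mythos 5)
Your proposal is correct and follows essentially the same route as the paper: the paper's proof of Theorem \ref{thm:KSD_OU} obtains $h(x,x')$ and $h_v(x,x')$ as the RKHS inner products $\langle\xi(x),\xi(x')\rangle_{k}$ and $\langle\xi_{v}(x),\xi_{v}(x')\rangle_{K}$ of the Riesz representers of $f\mapsto\calA f(x)$ and $F\mapsto\calAv F(x)$, and the derivative-reproducing property converts those inner products into exactly the iterated partial derivatives of $k$ that you obtain by applying the operators slot-by-slot in the eigenbasis of $C$. Your term-by-term identifications (eigenbasis expansion of the traces, Parseval for the cross terms, symmetry of mixed Fr\'echet partials under Assumption \ref{ass:nonvec_k_ass}) reproduce the paper's computation, and the interchanges of summation, differentiation and expectation that you defer to are precisely those verified in the proof of Theorem \ref{thm:KSD_OU}.
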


These expressions are somewhat large so we now provide examples of kernels which satisfy our assumptions and the corresponding expressions for $h_{v}$. The expressions for $h$ we do not include for brevity but the proof of Proposition \ref{prop:ker_example} should instruct the reader how to derive them.  

\begin{proposition}\label{prop:ker_example}
	Under Assumption \ref{ass:X} let $T\in L(\calX)$, then the SE-$T$ and IMQ-$T$ kernels from Example \ref{exp:SE_IMQ} satisfy Assumption  \ref{ass:nonvec_k_ass} and Assumption \ref{ass:vec_k_ass} and their corresponding $h_{v}$ expressions from Corollary \ref{cor:stein_kernels} are
	\begin{align*}
	h_{v}^{\emph{SE}}(x,y) & = k_{\emph{SE-}T}(x,y)\big(\langle x+CDU(x),y+CDU(y)\rangle_{\calX} - \langle SC(x-y),x-y\rangle_{\calX} \\
	& - \langle SC(CDU(x)-CDU(y)),x-y\rangle_{\calX} + \emph{Tr}(SC^{2}) - \norm{CS(x-y)}_{\calX}^{2}\big)\\
	h_{v}^{\emph{IMQ}}(x,y) & = k_{\emph{IMQ-}T}(x,y)\langle x+CDU(X),y+CDU(y)\rangle_{\calX}\\
	& + k_{\emph{IMQ-}T}(x,y)^{3}\big(\emph{Tr}(SC^{2}) - \langle SC(x-y),x-y\rangle_{\calX} \\
	& - \langle SC(CDU(x)-CDU(y)),x-y\rangle_{\calX}\big) \\
    & -3k_{\emph{IMQ-}T}(x,y)^{5}\norm{CS(x-y)}_{\calX}^{2},
	\end{align*}
	where $S = T^{*}T$. 
\end{proposition}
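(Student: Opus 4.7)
The plan is to verify the two regularity assumptions and then carry out the differentiation and substitution into the formula from Corollary \ref{cor:stein_kernels}. Both kernels are compositions of a smooth scalar function with the map $\phi(x,y) \coloneqq \norm{T(x-y)}_{\calY}^{2} = \langle S(x-y),x-y\rangle_{\calX}$, where $S = T^{*}T \in L(\calX)$ is symmetric and positive. Since $D_{1}\phi(x,y) = 2S(x-y)$, $D_{2}\phi(x,y) = -2S(x-y)$, $D_{1}^{2}\phi = D_{2}^{2}\phi = 2S$ and $D_{1}D_{2}\phi = D_{2}D_{1}\phi = -2S$, repeated application of the chain and product rules yields smooth dependence in both arguments up to arbitrary order, so the pointwise existence and continuity of the derivatives needed for $C_{b}^{(2,2)}(\calX\times\calX)$ is immediate.

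Next I would do the computations for SE-$T$. Writing $k = e^{-\phi/2}$ and using the chain rule gives $D_{1}k(x,y) = -k(x,y)\, S(x-y)$ and $D_{2}k(x,y) = k(x,y)\, S(x-y)$ as elements of $\calX$. Differentiating $D_{1}k$ once more in $y$ with the product rule, where $k$ contributes a factor $k(x,y)\,\langle S(x-y),\cdot\rangle_{\calX}$ and $S(x-y)$ contributes the operator $-S$, yields
\begin{equation*}
D_{2}D_{1}k(x,y) = k(x,y)\bigl(S - S(x-y)\otimes S(x-y)\bigr) \in L(\calX),
\end{equation*}
where $(a\otimes b)h = a\langle b,h\rangle_{\calX}$. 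For IMQ-$T$, with $k = (\phi + 1 - \|T(x-y)\|^2 + \|T(x-y)\|^2)^{-1/2}$ — more cleanly, $k = (\phi+1)^{-1/2}$ — the same recipe gives $D_{1}k = -k^{3}S(x-y)$, $D_{2}k = k^{3}S(x-y)$, and $D_{2}D_{1}k = k^{3}S - 3k^{5}\,S(x-y)\otimes S(x-y)$.

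Substituting these into the $h_{v}$ formula of Corollary \ref{cor:stein_kernels} and using symmetry of $S$ and $C$ (so that $\langle S(x-y),Cz\rangle_{\calX} = \langle SCz,x-y\rangle_{\calX}$ and $\langle S(x-y),C^{2}S(x-y)\rangle_{\calX} = \norm{CS(x-y)}_{\calX}^{2}$) collapses the four terms. In particular
\begin{equation*}
-\langle D_{1}k(x,y),Cy+C^{2}DU(y)\rangle_{\calX} - \langle D_{2}k(x,y),Cx+C^{2}DU(x)\rangle_{\calX}
\end{equation*}
becomes $-k\bigl(\langle SC(x-y),x-y\rangle_{\calX} + \langle SC(CDU(x)-CDU(y)),x-y\rangle_{\calX}\bigr)$ for SE-$T$, with the same expression multiplied by $k^{2}$ extra for IMQ-$T$; the trace term produces $k\,\mathrm{Tr}(SC^{2}) - k\norm{CS(x-y)}_{\calX}^{2}$ (resp.\ $k^{3}\mathrm{Tr}(SC^{2}) - 3k^{5}\norm{CS(x-y)}_{\calX}^{2}$); and the final term $k\langle x+CDU(x),y+CDU(y)\rangle_{\calX}$ is unchanged. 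Collecting these pieces recovers the stated $h_{v}^{\mathrm{SE}}$ and $h_{v}^{\mathrm{IMQ}}$.

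The only nontrivial obstacle is the \emph{boundedness} of the derivatives required by Assumptions \ref{ass:nonvec_k_ass} and \ref{ass:vec_k_ass}. This needs the uniform bounds $\sup_{v\in\calX} k(v)\norm{S v}_{\calX}^{j} < \infty$ and $\sup_{v\in\calX} k(v)\norm{S}_{L(\calX)} < \infty$ (and analogues with $k^{3}, k^{5}$ for IMQ-$T$). Using $\norm{Sv}_{\calX} \leq \norm{T}_{L(\calX)} \norm{Tv}_{\calY}$ these reduce to boundedness on $[0,\infty)$ of $r\mapsto r^{j}e^{-r^{2}/2}$ in the SE case and $r\mapsto r^{j}(r^{2}+1)^{-m/2}$ (for sufficiently large $m$) in the IMQ case, both of which are elementary. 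Once these uniform bounds are recorded, the two kernels satisfy Assumption \ref{ass:nonvec_k_ass}, hence also Assumption \ref{ass:vec_k_ass}, and the derivations of $h_{v}^{\mathrm{SE}}$ and $h_{v}^{\mathrm{IMQ}}$ above complete the proof; the analogous $h$ expressions follow by the same bookkeeping applied to the fully non-vectorised formula in Corollary \ref{cor:stein_kernels}.
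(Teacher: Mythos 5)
Your proposal is correct and follows essentially the same route as the paper's proof: compute the first and mixed second partial Fr\'echet derivatives of $k$ via the chain rule applied to $\phi(x,y)=\langle S(x-y),x-y\rangle_{\calX}$, obtain uniform bounds from the boundedness of $t\mapsto te^{-t^{2}/2}$, $t\mapsto te^{-t/2}$ and $t\mapsto t^{j}(t^{2}+1)^{-m/2}$, and substitute into the $h_{v}$ formula of Corollary \ref{cor:stein_kernels}. Your sign in $D_{2}D_{1}k_{\mathrm{SE}\text{-}T}(x,y) = k(x,y)\bigl(S - S(x-y)\otimes S(x-y)\bigr)$ is the correct one (and is what the stated $h_{v}^{\mathrm{SE}}$ requires); the paper's displayed intermediate formula carries a $+$ there, a typo that does not affect its norm-boundedness argument.
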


\section{Fourier representation of kernel Stein discrepancy}\label{sec:KSD_Tik}
The KSD given by Definition \ref{def:ksd} will not necessarily separate measures in $\calP(\calX)$, meaning it is possible that $\mbox{KSD}(P, Q) = 0$ when $Q \neq P$. Conditions on the kernel and Stein operator to ensure that the KSD can separate measures have been given in multiple scenarios in the finite dimensional case \citep{Gorham2015,Chwialkowski2016,Liu2016} but all rely upon having probability density functions. This is of course not possible in the infinite dimensional case due to densities not existing. To circumvent this we establish a novel link between KSD and elliptic measure equations. This link occurs by extending 
the current popular methodology of the generator method to include studying the measure equation defined by the generator used as the Stein operator. 

Before this, some new notation must be introduced. Given a Borel measure $\nu \in \calB(\calX)$ let $\widehat{\nu}$ be the characteristic function, also known as the Fourier transform, of $\nu$ defined by $
\widehat{\nu}(s) = \int e^{i\langle s, x\rangle_{\calX}}d\nu(x)
$, for $s \in \calX$ and where $i$ is the imaginary unit.  Let  $\calX_{\bbC}$ denote the complexification of $\calX$, so that  ${\calX_{\bbC}\coloneqq\{a+ib\colon a,b\in\calX\}}$ with associated inner product 
$$\langle a+ib,c+id\rangle_{\calX_{\bbC}} = \langle a,c\rangle_{\calX}+\langle b,d\rangle_{\calX}+i\langle b,c\rangle_{\calX}-i\langle a,d\rangle_{\calX}.$$ 
This inner product is used in the proofs involving the vectorised operator to derive expressions which involve scalar multiplications of complex exponentials with elements of $\calX$. See \citet{Paulsen2016} for further discussion about complexification of Hilbert spaces. Now, recall the operator $\calA$ defined by \eqref{eq:grad_gen}.  This is used to form a measure equation \citep{Bogachev2009survey,Bogachev1995regularity,Albeverio1999uniqueness,Bogachev2010}. Specifically, following \citet{Bogachev1995regularity}, for a Borel measure $Q$ on $\calX$, write 
$$
    \calA^* Q = 0
$$
if the following two conditions are satisfied
\begin{equation}
\label{eq:U_condition}
    \langle y, \cdot + CDU(\cdot)\rangle_{\calX} \in L^2(\calX;Q)\quad \forall y \in \calX,
\end{equation}
and 
\begin{equation}
\label{eq:integral_condition}
    \bbE_{Q}[\calA f(X)] = 0 \quad\forall f\in \calF C_b^\infty(\calX).
\end{equation}

Extending $\calA$ to take complex-valued functions and using the fact that $\mbox{span}\lbrace e^{i\langle s, \cdot\rangle_{\calX}} \, :\, s \in \calX\rbrace \subset \calF C_{b}^{\infty}(\calX)$ is dense in $L^2(\calX; Q)$, the following result from  \citet[Remark 3.13]{Bogachev1995regularity} provides a Fourier condition which is equivalent to \eqref{eq:integral_condition}.

\begin{proposition}  
\label{prop:fourier_solution}
Suppose that Assumptions \ref{ass:X} and \ref{ass:P_Q_assumptions} hold then $Q$ satisfies \eqref{eq:integral_condition} if and only if 
\begin{equation}
    \bbE_{Q}[\calA (e^{i\langle s, \cdot\rangle_{\calX}})(X)] = 0 \quad \forall s \in \calX,
\end{equation}
which can be written as 
\begin{equation}
    \left\langle\mathbb{E}_Q\left[\Gamma(e^{i\langle s,\cdot\rangle_{\calX}})(X)\right],s\right\rangle_{\calX_{\bbC}}=0\quad \forall s\in\calX, \label{eq:gamma_crit}
\end{equation}
where $\Gamma$ is the operator defined in \eqref{eq:operator_gamma}.
\end{proposition}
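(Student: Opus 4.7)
The plan is to prove the two equivalences in turn: first the equivalence between \eqref{eq:integral_condition} and the vanishing of $\bbE_Q[\calA(e^{i\langle s,\cdot\rangle_{\calX}})(X)]$ for every $s\in\calX$, and then the reformulation as \eqref{eq:gamma_crit}.

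For the first equivalence, the $(\Rightarrow)$ direction is immediate: for each $s\in\calX$ the real-valued functions $\cos(\langle s,\cdot\rangle_{\calX})$ and $\sin(\langle s,\cdot\rangle_{\calX})$ both belong to $\calF C_b^{\infty}(\calX)$ (taking $n=1$, $l_1=s$ and $\phi=\cos$ or $\sin$), so applying \eqref{eq:integral_condition} to each and combining by complex linearity gives $\bbE_Q[\calA(e^{i\langle s,\cdot\rangle_{\calX}})(X)]=0$. For the $(\Leftarrow)$ direction I would fix $f\in\calF C_b^{\infty}(\calX)$ of the form $f(x)=\phi(\langle l_1,x\rangle_{\calX},\ldots,\langle l_n,x\rangle_{\calX})$ with $\phi\in C_b^{\infty}(\bbR^n)$, and expand
\[
\calA f(x)=\sum_{j,k=1}^{n}\langle Cl_j,l_k\rangle_{\calX}(\partial_j\partial_k\phi)(Lx)-\sum_{j=1}^{n}\langle l_j,x+CDU(x)\rangle_{\calX}(\partial_j\phi)(Lx),
\]
where $Lx=(\langle l_1,x\rangle_{\calX},\ldots,\langle l_n,x\rangle_{\calX})$. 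The task then reduces to approximating $\phi$ by a sequence $\phi_N$ of finite $\bbC$-linear combinations of exponentials $y\mapsto e^{i\langle t,y\rangle_{\bbR^n}}$ on $\bbR^n$, for which $\phi_N,\partial_j\phi_N,\partial_j\partial_k\phi_N$ are uniformly bounded and converge pointwise to $\phi,\partial_j\phi,\partial_j\partial_k\phi$; a concrete recipe is to first mollify and smoothly truncate $\phi$ to obtain a Schwartz approximant $\tilde{\phi}_R$, then write $\tilde{\phi}_R$ via Fourier inversion and discretise the inversion integral. Setting $f_N(x)=\phi_N(Lx)$ produces finite linear combinations of $e^{i\langle s,\cdot\rangle_{\calX}}$ with $s=\sum_j t_j l_j\in\calX$, so $\bbE_Q[\calA f_N(X)]=0$ by hypothesis, and dominated convergence with dominating function of the form $C_1+C_2\norm{x}_{\calX}+C_3\norm{CDU(x)}_{\calX}$ gives $\bbE_Q[\calA f(X)]=0$. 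The integrability needed to dominate is exactly what \eqref{eq:Q_conditions} of Assumption \ref{ass:P_Q_assumptions} supplies.

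For the reformulation \eqref{eq:gamma_crit} I would compute directly. For $f(x)=e^{i\langle s,x\rangle_{\calX}}$ one has $Df(x)=is\,e^{i\langle s,x\rangle_{\calX}}$ and $D^2f(x)=-(s\otimes s)e^{i\langle s,x\rangle_{\calX}}$, whence $\Tr(CD^2f(x))=-\langle Cs,s\rangle_{\calX}e^{i\langle s,x\rangle_{\calX}}$ and
\[
\calA f(x)=-\bigl[\langle Cs,s\rangle_{\calX}+i\langle s,x+CDU(x)\rangle_{\calX}\bigr]e^{i\langle s,x\rangle_{\calX}}.
\]
On the other hand $\Gamma f(x)=\bigl[iCs-(x+CDU(x))\bigr]e^{i\langle s,x\rangle_{\calX}}$, and using the definition of $\langle\cdot,\cdot\rangle_{\calX_{\bbC}}$ together with the identity $ie^{i\theta}=-\sin\theta+i\cos\theta$ a short calculation yields $\calA f(x)=i\langle\Gamma f(x),s\rangle_{\calX_{\bbC}}$. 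Taking $\bbE_Q$ on both sides then shows that $\bbE_Q[\calA(e^{i\langle s,\cdot\rangle_{\calX}})(X)]=0$ for all $s$ is equivalent to \eqref{eq:gamma_crit}.

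The main obstacle is the approximation step: the approximants $f_N$ must track $f$ not merely in $L^2(\calX;Q)$ but simultaneously in the first two derivatives, so that dominated convergence applies to the drift term $\langle Df(x),x+CDU(x)\rangle_{\calX}$, which is unbounded in $x$. The algebraic $\calA$-to-$\Gamma$ identification and the forward implication are essentially mechanical; the substance of the proposition lies in this density-plus-integrability interplay, which is exactly why the integrability hypotheses in \eqref{eq:Q_conditions} are imposed on $Q$.
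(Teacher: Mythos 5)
Your proof is correct, but it is a genuinely different route from the paper's. The paper disposes of this proposition in two sentences: it observes that the statement is exactly \citet[Remark 3.13]{Bogachev1995regularity} and that the only hypothesis to be verified is that $\norm{B}_{\calX}\in L^{1}(\calX;Q)$ for the drift $B(x)=-x-CDU(x)$, which is supplied by \eqref{eq:Q_conditions}; the identification of $\calA(e^{i\langle s,\cdot\rangle_{\calX}})$ with $i\langle \Gamma(e^{i\langle s,\cdot\rangle_{\calX}}),s\rangle_{\calX_{\bbC}}$ is then the same elementary computation you perform. What you have done instead is reprove the Bogachev--R\"ockner remark from scratch: the forward direction via $\cos(\langle s,\cdot\rangle_{\calX}),\sin(\langle s,\cdot\rangle_{\calX})\in\calF C_b^{\infty}(\calX)$, and the backward direction by reducing a general cylindrical $f=\phi\circ L$ to trigonometric sums and passing to the limit under the integral. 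That argument is sound, and your closing paragraph correctly identifies its one delicate point: you need the approximants $\phi_N$ to have \emph{uniformly} bounded derivatives up to order two across the whole diagonal sequence, not just for fixed cutoff radius. This is achievable but should be said explicitly -- the cutoffs $\phi\chi_R$ with $\chi_R(\cdot)=\chi(\cdot/R)$ have derivative bounds independent of $R\geq 1$ by the product rule, and for each fixed $R$ the Fourier discretisation converges to $\phi\chi_R$ uniformly together with its first two derivatives, so choosing the discretisation fine enough (depending on $R$) that the sup-norm error in all derivatives is at most $1$ keeps the diagonal sequence uniformly bounded. (An alternative that avoids this bookkeeping is to view the map $\phi\mapsto\bbE_Q[\calA(\phi\circ L)(X)]$ as a tempered distribution of order two on $\bbR^n$ built from finite measures, whose Fourier transform vanishes by hypothesis.) The trade-off is clear: the paper's citation is short but opaque, while your argument is self-contained and makes visible exactly where the integrability hypotheses \eqref{eq:Q_conditions} enter, namely in dominating the drift term $\langle Df(x),x+CDU(x)\rangle_{\calX}$.
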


The proof of this result in the case $\calX = \bbR^{d}$ relies on a simple integration by parts argument involving the score function $\nabla\log p(x)$ where $p$ is the density of the target with respect to the Lebesgue measure \citep[Proposition 1]{Gorham2015}. The difficulty in the current infinite dimensional case is two-fold. First the identification of an appropriate analogy to $\nabla\log p(x)$ and second the identification of an appropriate analogy to an integration by parts result. The first of these is tackled through the notion of a logarithmic derivative \citep{Bogachev2010}. The second is through the existence of integration by parts type results for logarithmic gradients. See Section \ref{subsec:deriv_measures} in the supplement \citep{Wynne2023Supp} for more discussion.

The reason for looking at the measure equation is that existence of solutions to  \eqref{eq:integral_condition} is, under certain assumptions, equivalent to the existence of invariant measures of the associated Markov process, given by \eqref{eq:diffusion}, and uniqueness can also be obtained. This is made concrete in the following result \citep[Theorem 4.5]{Albeverio1999uniqueness}.

\begin{proposition}
\label{prop:unique}
Suppose that Assumptions \ref{ass:X} and \ref{ass:P_Q_assumptions} hold for $Q = P \propto e^{-{U}}N_C$.  Then $P$ is the unique probability measure that satisfies \eqref{eq:U_condition} and \eqref{eq:integral_condition}.  
\end{proposition}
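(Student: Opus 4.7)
The plan is to split the result into existence ($P$ itself satisfies both conditions) and uniqueness ($P$ is the only probability measure doing so), leaning on the Markov-process interpretation built up in Propositions \ref{prop:diffusion} and \ref{prop:generator}.

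For existence, I would first verify \eqref{eq:U_condition} with $Q = P$ by Cauchy--Schwarz, reducing it to the finiteness of $\bbE_P[\norm{X}_{\calX}^2]$ and $\bbE_P[\norm{CDU(X)}_{\calX}^2]$. The former holds by Assumption \ref{ass:P_Q_assumptions} applied with $Q=P$, and the latter follows from the Malliavin--Sobolev condition $e^{-U/2}\in W^{1,2}_C(\calX)$ combined with $\bbE_{N_C}[\norm{C^{1/2}DU(X)}_{\calX}^2]<\infty$ through a Radon--Nikodym calculation using $dP/dN_C\propto e^{-U}$. For \eqref{eq:integral_condition}, Propositions \ref{prop:diffusion} and \ref{prop:generator} identify $\calA$ as the generator on $\calF C_b^\infty(\calX)$ of the diffusion \eqref{eq:diffusion} whose invariant measure is $P$; differentiating the stationarity identity $\bbE_P[f(X_t)] = \bbE_P[f(X_0)]$ at $t=0$ for $f\in\calF C_b^\infty(\calX)$ then yields the required vanishing, with interchange of derivative and expectation justified by the bounded derivatives of $f$ and the integrability of the drift $x+CDU(x)$.

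Uniqueness is the substantive part. The natural tool is to convert the measure equation into a statement about a symmetric Dirichlet form by integrating by parts against $P$,
\begin{align*}
\calE(f,g) \;=\; \bbE_P\big[\langle C^{1/2}Df(X),\,C^{1/2}Dg(X)\rangle_{\calX}\big], \qquad f,g \in \calF C_b^\infty(\calX),
\end{align*}
so that $\bbE_P[(\calA f)g] = -\calE(f,g)$. The condition $e^{-U/2}\in W^{1,2}_C(\calX)$ is precisely the regularity needed for $\calE$ to be closable on $L^2(\calX;P)$ and for its closure to be a Dirichlet form whose infinitesimal generator is a self-adjoint extension of $\calA$. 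If another probability measure $\widetilde Q$ also satisfied the measure equation, then via Proposition \ref{prop:fourier_solution} the identities $\bbE_{\widetilde Q}[\calA e^{i\langle s,\cdot\rangle_{\calX}}]=0$, $s\in\calX$, give a functional equation for $\widehat{\widetilde Q}$ which, combined with ergodicity of the associated Markov semigroup, forces $\widehat{\widetilde Q} = \widehat P$ and hence $\widetilde Q = P$.

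The main obstacle is the closability and essential self-adjointness of $\calE$ on a general separable infinite-dimensional Hilbert space: this is a genuinely non-trivial piece of infinite-dimensional analysis, relying on the trace-class structure of $C$, the Malliavin--Sobolev regularity of $e^{-U/2}$, and careful use of Gaussian analysis on $\calX$. Since Assumption \ref{ass:P_Q_assumptions} has been set up to match the hypotheses of \citet[Theorem 4.5]{Albeverio1999uniqueness} exactly, my proof would invoke that theorem directly rather than reproduce the Dirichlet-form machinery from scratch.
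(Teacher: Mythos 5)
Your proposal is correct and follows essentially the same route as the paper: existence of a solution is a direct consequence of invariance (the paper instead applies the logarithmic-derivative integration-by-parts formula of Proposition \ref{prop:IBP}, which amounts to the same computation as differentiating your stationarity identity), and uniqueness is delegated to \citet[Theorem 4.5]{Albeverio1999uniqueness}. The one place where your write-up is thinner than the paper's is the final step: the hypotheses of that theorem do not match Assumption \ref{ass:P_Q_assumptions} verbatim, and the bulk of the paper's proof consists of verifying them --- that $N_C$ satisfies a logarithmic-Sobolev inequality, is shift-equivalent along the (dense) Cameron--Martin space, has logarithmic gradient $\beta^{N_C}_{Ch}(x)=-\langle h,x\rangle_{\calX}$ which is square-integrable, and that the closure of the domain of the associated Dirichlet form equals $W^{1,2}_C(\calX)$ so that $e^{-U/2}$ lies in it; each of these requires a separate citation rather than being automatic. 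Your intermediate sketch via Proposition \ref{prop:fourier_solution} and ergodicity of the semigroup is not needed (and would itself require justification) once the cited uniqueness theorem is invoked.
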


\begin{remark}
The proof for the above result largely rests on integration-by-parts type results which are discerned using logarithmic derivatives. This is the infinite dimensional analogue to integration by parts type results that use density functions in finite dimensions. The interested reader may consult \citet{Bogachev2009survey,Bogachev2010} and the supplement for more details on such derivatives.
\end{remark}

Currently it has been established that measure equations can provide a characterisation of our target measure and the criterion can be written in terms of $\Gamma$, see \eqref{eq:gamma_crit}, which is related to the Stein operator $\calA$. The next result shows a Fourier representation of KSD which relates KSD to the criterion \eqref{eq:gamma_crit} and hence to the measure equation. 

It is at this stage that our analysis focuses on kernels which are the Fourier transforms of measures, and hence are translation invariant. The requirement of this property is essential for what follows as it provides the connection between KSD and the novel Fourier representation. A kernel not having this property, for example a non-translation invariant kernel, would not possess such a representation of KSD. This disqualifies some kernels which have been used for functional data such as by \citep{Chevyrev2018,Salvi2021}.

\begin{theorem}\label{thm:Tik_KSD}
    Suppose Assumptions \ref{ass:X} and \ref{ass:P_Q_assumptions} and $k(x,y) = \widehat{\mu}(x-y)$ for some $\mu\in\calB(\calX)$. If $k$ satisfies Assumption \ref{ass:nonvec_k_ass} then
    \begin{align}
        \emph{KSD}_{\calA, k}(Q,P)^{2} = \int_{\calX}\left|\bbE_{Q}[\calA (e^{i\langle s, \cdot\rangle_{\calX}})(X)] \right|_{\mathbb{C}}^2 d\mu(s),\label{eq:KSD_nonvec_int}
    \end{align}
    and if $k$ satisfies Assumption \ref{ass:vec_k_ass} then
    \begin{align}
        &\emph{KSD}_{\calA_v, K}(Q,P)^{2} = \int_{\calX}\left\|\mathbb{E}_Q\left[\Gamma(e^{i\left<s,\cdot\right>_{\calX}})(X)\right]\right\|_{\calX_{\mathbb{C}}}^2 d\mu(s).\label{eq:KSD_vec_int}
    \end{align}
\end{theorem}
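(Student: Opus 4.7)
The plan is to start from the double-expectation formulas in Theorem \ref{thm:KSD_OU} and expand the kernel via its Bochner representation $k(x,y)=\widehat{\mu}(x-y)=\int_{\calX} e^{i\langle s, x-y\rangle_{\calX}}\, d\mu(s)$. The critical algebraic observation is the factorisation $e^{i\langle s,x-y\rangle_\calX}=e^{i\langle s,x\rangle_\calX}\,\overline{e^{i\langle s,y\rangle_\calX}}$, which decouples $x$ and $y$ inside the integrand. Applying the tensor operators $\calA\otimes\calA$ (respectively $\Gamma_i\otimes\Gamma_i$) then produces an integrand of the form $\calA_x[e^{i\langle s,\cdot\rangle}](x)\,\overline{\calA_y[e^{i\langle s,\cdot\rangle}](y)}$, and after swapping $\bbE_{Q\times Q}$ with $\int d\mu(s)$ the product structure collapses into $\bbE_Q[\cdot]\,\overline{\bbE_Q[\cdot]}=|\bbE_Q[\cdot]|_{\bbC}^2$, giving the claimed spectral integrals.

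To make this precise I would first extend $\calA$ and $\Gamma$ to complex-valued arguments by acting separately on the real and imaginary parts. Since $e^{i\langle s,\cdot\rangle_\calX}\in\calF C_b^\infty(\calX)$ for each $s\in\calX$, Proposition \ref{prop:generator} gives the concrete closed forms
\begin{align*}
    \calA(e^{i\langle s,\cdot\rangle_\calX})(x) &= -\bigl(\langle Cs,s\rangle_\calX+i\langle s,x+CDU(x)\rangle_\calX\bigr) e^{i\langle s,x\rangle_\calX},\\
    \Gamma(e^{i\langle s,\cdot\rangle_\calX})(x) &= \bigl(iCs-x-CDU(x)\bigr) e^{i\langle s,x\rangle_\calX},
\end{align*}
which are exactly the quantities appearing on the right-hand sides of \eqref{eq:KSD_nonvec_int} and \eqref{eq:KSD_vec_int}. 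Differentiating $k(x,y)=\int e^{i\langle s,x-y\rangle_\calX}\,d\mu(s)$ under the $\mu$-integral is then justified by polynomial moment bounds on $\mu$ deduced from the regularity of $k$: Assumption \ref{ass:nonvec_k_ass} forces $\int(1+\|s\|_\calX^4)\,d|\mu|(s)<\infty$, and analogously the second-order bound under Assumption \ref{ass:vec_k_ass}. Combined with Assumption \ref{ass:P_Q_assumptions}, which controls $\bbE_Q[\|x+CDU(x)\|_\calX]$, these bounds yield an integrable dominating function and licence the Fubini exchange between $\bbE_{Q\times Q}$ and $\int d\mu(s)$.

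For the vectorised identity, after these exchanges one arrives at
\begin{align*}
    \emph{KSD}_{\calA_v,K}(Q,P)^2 = \sum_{i=1}^{\infty}\int_{\calX}\bigl|\bbE_Q[\Gamma_i (e^{i\langle s,\cdot\rangle_\calX})(X)]\bigr|_{\bbC}^2\,d\mu(s).
\end{align*}
Monotone convergence moves the sum inside the integral, and Parseval's identity with the orthonormal basis $\{e_i\}$ of $\calX_{\bbC}$ rewrites the resulting sum as $\|\bbE_Q[\Gamma(e^{i\langle s,\cdot\rangle_\calX})(X)]\|_{\calX_{\bbC}}^2$, delivering \eqref{eq:KSD_vec_int}. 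The non-vectorised case \eqref{eq:KSD_nonvec_int} is the scalar analogue and needs no Parseval step.

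The main obstacle is cleanly handling the two interchanges in infinite dimensions: moving $\calA$ or $\Gamma$ inside the Bochner integral defining $k$, and applying Fubini to swap $\bbE_{Q\times Q}$ with $\int d\mu(s)$. Both rest on extracting moment estimates for $\mu$ from the $C_b^{(2,2)}$ or $C_b^{(1,1)}$ regularity of $k$ (via repeated Fr\'echet differentiation of $\widehat{\mu}$ at the origin) and combining them with the integrability built into Assumption \ref{ass:P_Q_assumptions}; once this bookkeeping is done, the remaining calculations reduce to substituting the explicit expressions for $\calA(e^{i\langle s,\cdot\rangle_\calX})$ and $\Gamma(e^{i\langle s,\cdot\rangle_\calX})$ computed above.
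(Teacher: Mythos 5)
Your proposal is correct and follows essentially the same route as the paper's proof: both hinge on the closed forms of $\calA(e^{i\langle s,\cdot\rangle_{\calX}})$ and $\Gamma(e^{i\langle s,\cdot\rangle_{\calX}})$ (which you compute correctly), the Bochner expansion of the kernel and its partial Fr\'echet derivatives, and a Fubini exchange between $\bbE_{Q\times Q}$ and $\int d\mu$ --- the paper merely runs the computation in the opposite direction, expanding the spectral integral and matching it term by term against the expression in Corollary \ref{cor:stein_kernels}. One small caveat: your claim that Assumption \ref{ass:nonvec_k_ass} forces $\int(1+\norm{s}_{\calX}^{4})\,d\mu(s)<\infty$ does not follow in infinite dimensions (boundedness of $D_{2}D_{1}k$ at the diagonal only controls $\sup_{\norm{u}_{\calX}\leq 1}\int\langle s,u\rangle_{\calX}^{2}\,d\mu(s)$, i.e.\ boundedness of the covariance form of $\mu$, not its trace $\int\norm{s}_{\calX}^{2}\,d\mu(s)$); fortunately the integrand only ever involves the weighted moments $\int\langle Cs,s\rangle_{\calX}^{2}\,d\mu(s)$ and $\int\langle s,u\rangle_{\calX}^{2}\,d\mu(s)$, which do follow from the kernel regularity because $C$ is trace class, so the dominating function and the Fubini step survive with this correction.
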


This representation of KSD is called a Fourier representation since for such $k$ the measure $\mu$ is known as the Fourier measure of $k$ and the KSD has been written in a form which only includes the action of the operators $\calA$ and $\Gamma$ on complex exponentials, weighted by the Fourier measure $\mu$.

\begin{remark}\label{rem:Stein_Tik}
In the special case where $P = N_C$, so that $U = 0$ and $\Gamma f(x) = CDf(x) - xf(x)$, the Fourier characterisation for the vectorised case \eqref{eq:KSD_vec_int} reduces to
$$
    \text{KSD}_{\calAv,K}(Q,N_C)^{2} = \int_{\calX}\left\|Cs\widehat{Q}(s) + D\widehat{Q}(s)\right\|_{\calX_{\bbC}}^{2}d\mu(s),
$$
which we recognize to be very similar to the test-statistic introduced in \citet{Ebner2020review}, which was employed for goodness-of-fit testing for finite dimensional data.   This test-statistic is based on the Stein-Tikhomirov method \citep{Tikhomirov1981,Arras2016}, leveraging the observation that $\phi = \widehat{N}_{C}$ is the unique solution to the differential equation $Cs\phi(s)+D\phi(s) = 0, \phi(0) = 1$ so the integrand quantifies the discrepancy between $Q$ and $P$ in terms of the magnitude of the residual induced by plugging $\widehat{Q}$ into this differential equation.  This interesting connection demonstrates how KSD subsumes this particular methodology as a special case, which to our knowledge, was not previously known.
\end{remark}

This Fourier representation provides insight into the behaviour of KSD, specifically the influence of the kernel $k$ on the disprepancy. Indeed, a key aspect of \eqref{eq:KSD_nonvec_int} and \eqref{eq:KSD_vec_int} is that the kernel choice only influences the integrating measure $\mu$ while the integrand is determined entirely by the choice of Stein operator. This is in contrast to the initial definition of KSD in Definition \ref{def:ksd} where the kernel choice and Stein operator choice interact in a more complicated fashion. Therefore the Fourier representation addresses the second main aim of this paper, to increase interpretability of kernel Stein discrepancy. 

Indeed, the heavier the tails of $\mu$, the more weight is placed upon test functions $\calA(e^{i\langle s,\cdot\rangle_{\calX}})(\cdot)$ for values of $s$ with large norm. This can result in the test functions being more complex and thus the KSD becomes more discerning between $Q$ and $P$ since the expectation with respect to $Q$ has to match the expectation with respect to $P$ for these more complex functions. The following example illustrates this in the case $\calX = \bbR$.

\begin{example}\label{exp:Test_Functions}
    Let $\calX = \bbR$, $P$ have density $p(x)\propto \exp(-\left(\frac{x-3}{3}\right)^{2})$ and $\calA$ be the standard Langevin-Stein operator from \eqref{eq:A_fd} meaning $\calA f(x) = f''(x) + (\log p)'(x)f'(x)$. In this case the test functions are
    $$
    \calA(e^{is\cdot})(x) = -s^{2}\cos(sx)-s\sin(sx)(\log p)'(x) + i\left(-s^{2}\sin(sx)+s\cos(sx)(\log p)'(x)\right).
    $$
    Clearly, larger values of $s$ result in the test function $\calA(e^{is\cdot})$ having higher periodicity, from the trigonometric terms, and amplitude, from the $s^{2}$ terms. Figure \ref{fig:Test_Functions} illustrates this by plotting the real part of $\calA(e^{is\cdot})(x)$ over the range $[-10,10]$ for $10$ i.i.d.\ samples of $s$ from different choices of $\mu$ hence different kernels. In Figure \ref{fig:Gauss_Test} where $\mu$ is a standard normal having light tails and corresponding to a squared exponential kernel, the samples of $s$ result in test functions of limited complexity. In Figure \ref{fig:Students_Test} where $\mu$ is a Students-$t$ distribution, corresponding to a Mat\'{e}rn kernel, the tails are heavier than Gaussian and some of the samples of $s$ result in test functions with notable higher magnitude and periodicity. Finally, in Figure \ref{fig:Cauchy_Test} where $\mu$ is a Cauchy distribution, corresponding to the Laplace kernel, the heavy tails result in highly erratic test functions. 
\end{example}

  Example \ref{exp:Test_Functions} shows heavier tailed $\mu$ results in more erratic test functions and hence a more discerning KSD. This observation is purely informal and it would be interesting, but beyond the scope of this paper, to leverage the Fourier representations in Theorem \ref{thm:Tik_KSD} to fully characterise conditions, purely in terms of the tails of $\mu$, for when KSD is discerning enough to metrise weak convergence.

\begin{figure}[ht]
  \begin{subfigure}{0.31\textwidth}
    \includegraphics[width=\linewidth]{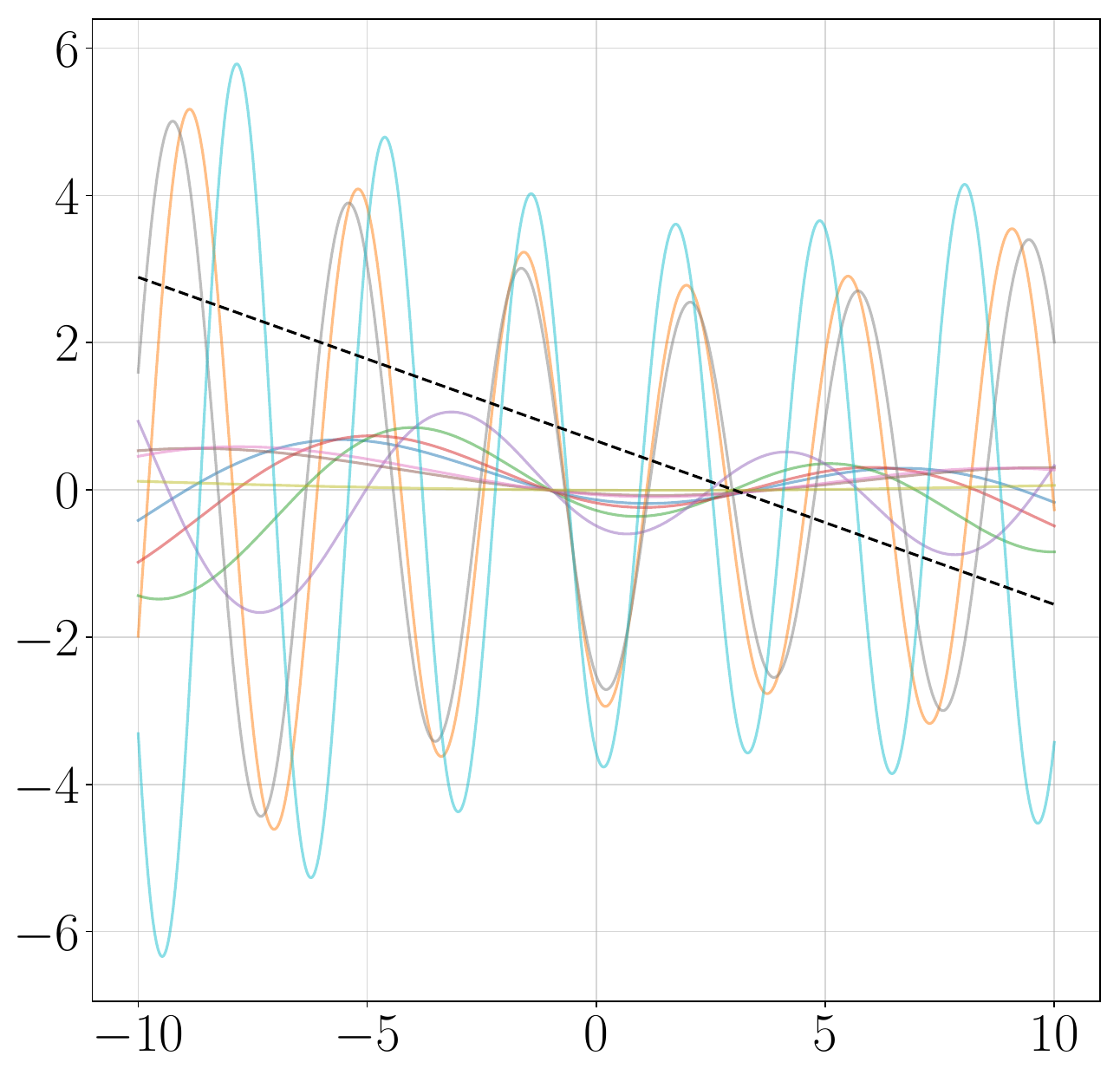}
    \caption{$\mu = \text{Gaussian}(0,1)$} \label{fig:Gauss_Test}
  \end{subfigure}%
  \hspace*{\fill} 
  \begin{subfigure}{0.31\textwidth}
    \includegraphics[width=\linewidth]{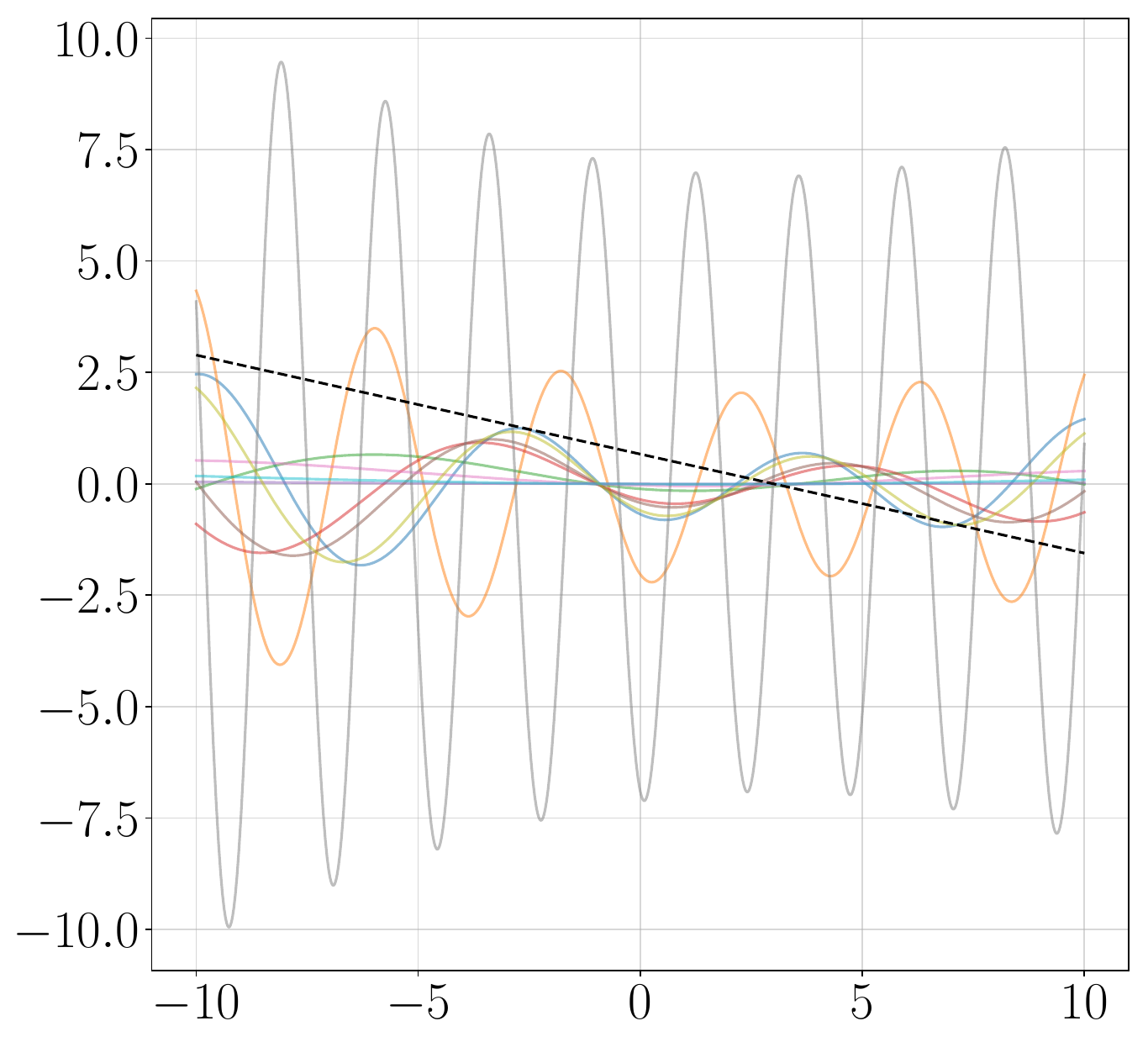}
    \caption{$\mu = \text{Students-}t(\nu=2)$} \label{fig:Students_Test}
  \end{subfigure}
  \hspace*{\fill} 
  \begin{subfigure}{0.31\textwidth}
    \includegraphics[width=\linewidth]{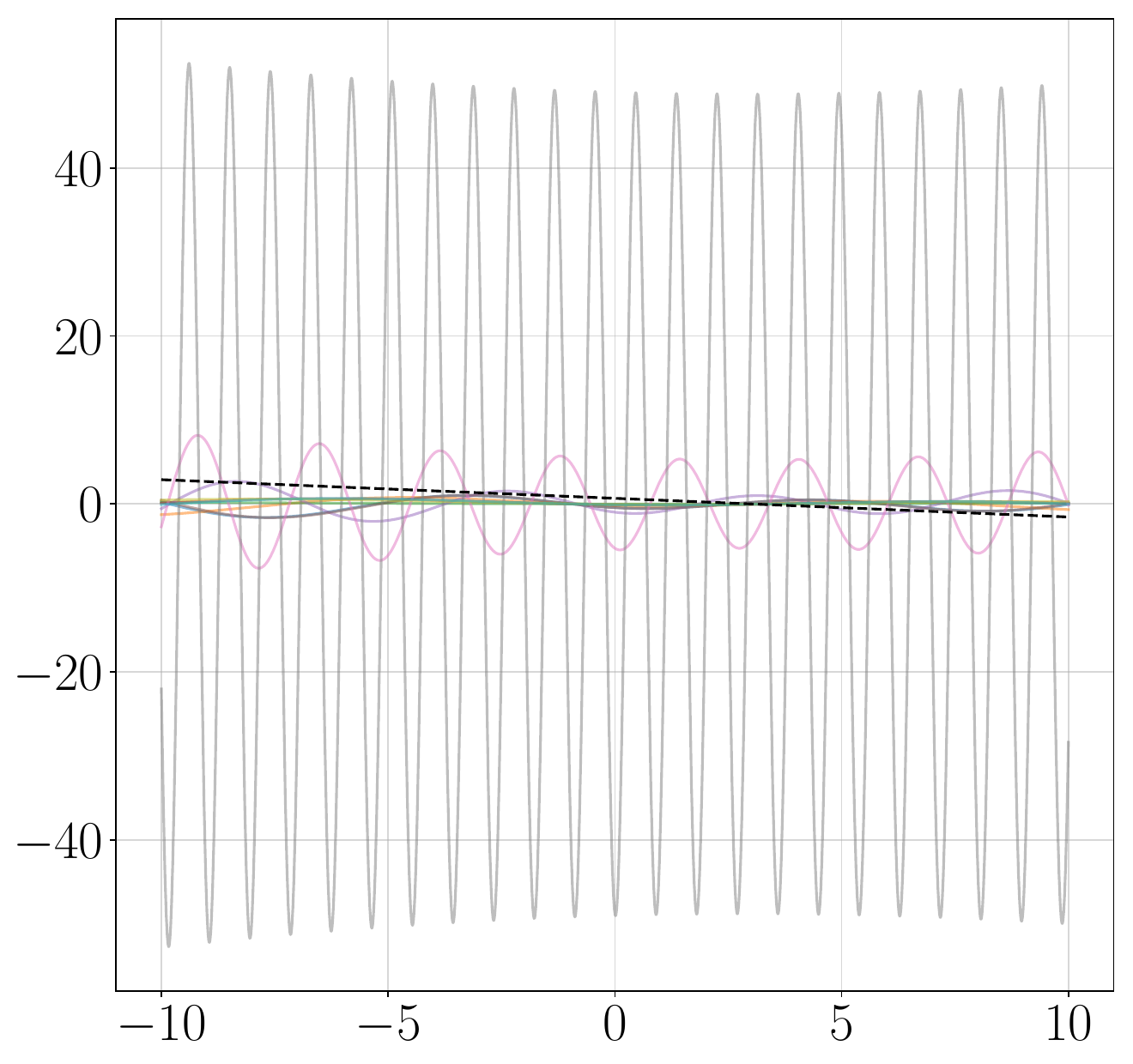}
    \caption{$\mu = \text{Cauchy}(0,1)$} \label{fig:Cauchy_Test}
  \end{subfigure}

\caption{Plots corresponding to Example \ref{exp:Test_Functions} of the real part of the test functions $\calA(e^{is\cdot})(x)$ for $10$ samples from different choices of $\mu$, the heavier the tails of $\mu$ the larger the samples of $s$ hence the greater the magnitude and periodicity of the test functions. In black is $(\log p)'(x)$ where $p(x)\propto \exp(-\left(\frac{x-3}{3}\right)^{2})$.} \label{fig:Test_Functions}
\end{figure}

More intuition can be gained when the finite dimensional case $\calX=\bbR^{d}$ is considered which is described in the next example. 

\begin{example}
	In the case where $\calX = \mathbb{R}^d$ and $C = I$, the argument of Theorem \ref{thm:Tik_KSD} can be repeated for KSD based on the vectorised Langevin-Stein operator \eqref{eq:vec_Afd}. Supposing $P,Q$ have differentiable densities $q,p$ with respect to the Lebesgue measure
	\begin{align}
		\text{KSD}_{\calAv,K}(Q,P)^{2} = \int_{\bbR^{d}}\norm[\bigg]{s\widehat{q}(s) - i\int_{\bbR^{d}}\nabla\log p(x)e^{i\langle s,x\rangle_{\bbR^{d}}}q(x)dx}_{{\bbC}^d}^{2}d\mu(s),\label{eq:spec_fin}
	\end{align}
	where $\widehat{q}(s) \coloneqq \int_{\bbR^{d}}q(x)e^{i\langle s,x\rangle_{\bbR^{d}}}dx$. Note that
	\begin{align*}
		i\int_{\bbR^{d}}\nabla\log p(x)e^{i\langle s,x\rangle_{\bbR^{d}}}p(x)dx = i\int_{\bbR^{d}}\nabla p(x)e^{i\langle s,x\rangle_{\bbR^{d}}}dx = i\widehat{\nabla p}(s) = s\widehat{p}(s),
	\end{align*} 
	 where the classical Fourier derivative identity $ i\widehat{\nabla p}(s) = s\widehat{p}(s)$ has been used for the final step. Therefore, the Fourier representation reveals the KSD is measuring how much this Fourier derivative identity is being violated when some of the $p$ terms are replaced with $q$, weighted according to the Fourier measure of the kernel.
\end{example}

The central assumption which underpins the Fourier characterisation of KSD is that the kernel $k$ can be expressed as the Fourier transform of some Borel measure $\mu \in \calB(\calX)$, meaning $k(x,y)  = \widehat{\mu}(x - y)$.  When $\calX$ is finite dimensional, Bochner's theorem \citep[Theorem 4.4]{Kukush2019} states that $k$ being translation invariant and continuous is necessary and sufficient.  However, this does not hold when $\calX$ is infinite dimensional.  Indeed, by the Minlos-Sazonov theorem \citep[Theorem 4.5]{Kukush2019}, it is necessary and sufficient that the kernel is translation invariant and continuous with respect to the Sazonov topology \citep[Section 4.4]{Kukush2019}, which is generated by neighbourhoods of the form $E_S = \lbrace x \in \calX \, : \, \langle Sx,x\rangle_{\calX} < 1 \rbrace$, for some $S \in L_1^{+}(\calX)$.  As this topology is coarser than the standard norm-induced topology on $\calX$, this imposes stronger smoothness constraints on $k$ than standard continuity. For example, the kernel $k(x,y) = \exp(-\frac{1}{2}\norm{x-y}_{\calX}^{2})$ is not the Fourier transform of any Borel measure on $\calX$ \citep[Example 4.1]{Kukush2019} and so does not satisfy the requirements of Theorem \ref{thm:Tik_KSD}.

\begin{example}\label{exp:ker_spec}
    If $T\in L^{+}_{1}(\calX)$ then the SE-$T^{1/2}$ kernel is the characteristic function of $N_{T}$ and the IMQ-$T^{1/2}$ kernel is the characteristic function of the measure which corresponds to the random element $\eta X$ where $X\sim N_{T}$ and $\eta\sim N(0,1)$ where $N(0,1)$ is the standard Gaussian on $\bbR$ and $\eta,X$ are independent \citep[Section 5]{Wynne2022}.
\end{example}

We now combine the Fourier representations obtained in Theorem \ref{thm:Tik_KSD} with the measure equation characterisation results Proposition \ref{prop:fourier_solution} and Proposition \ref{prop:unique} to obtain conditions for when KSD can separate measures. As mentioned before, this result holds in greater generality than previous KSD separation results since we are dealing in the potentially infinite dimensional case where the existing proofs based upon probability density functions cannot be applied. 

\begin{theorem}\label{thm:spec_separates}
    Suppose Assumptions \ref{ass:X} and  \ref{ass:P_Q_assumptions} and $k(x,y) = \widehat{\mu}(x-y)$ for some $\mu\in\calB(\calX)$ with full support. If $k$ satisfies Assumption \ref{ass:nonvec_k_ass} then
    \begin{align*}
    \emph{KSD}_{\calA,k}(Q,P) = 0\iff Q=P,
    \end{align*}
    and if $k$ satisfies Assumption \ref{ass:vec_k_ass} then 
    \begin{align*}
    \emph{KSD}_{\calA_v,K}(Q,P) = 0\iff  Q=P.
    \end{align*}
\end{theorem}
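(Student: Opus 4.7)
The plan is to use the spectral representations from Theorem \ref{thm:Tik_KSD} together with the uniqueness result Proposition \ref{prop:unique} for the measure equation. The forward implications are straightforward: if $Q = P$, then by Proposition \ref{prop:unique}, $P$ satisfies the measure equation \eqref{eq:integral_condition}. Combined with Proposition \ref{prop:fourier_solution}, this gives $\bbE_P[\calA(e^{i\langle s,\cdot\rangle_\calX})(X)] = 0$ for all $s$, so the integrand in \eqref{eq:KSD_nonvec_int} vanishes identically. For the vectorised case, a short integration-by-parts computation using the logarithmic derivative of $P \propto e^{-U}N_C$, applied coordinate-wise, shows that $\bbE_P[\Gamma(e^{i\langle s,\cdot\rangle_\calX})(X)] = 0$ componentwise, so the integrand in \eqref{eq:KSD_vec_int} also vanishes.

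For the reverse implication, suppose $\text{KSD}_{\calA,k}(Q,P) = 0$. By \eqref{eq:KSD_nonvec_int} the nonnegative integrand $\lvert\bbE_Q[\calA(e^{i\langle s,\cdot\rangle_\calX})(X)]\rvert^{2}$ vanishes $\mu$-a.e. The crucial step is to upgrade this to vanishing everywhere. Using the explicit form
\[
    \calA(e^{i\langle s,\cdot\rangle_\calX})(x) = \bigl(-\langle Cs, s\rangle_\calX - i\langle s, x + CDU(x)\rangle_\calX\bigr) e^{i\langle s,x\rangle_\calX},
\]
I would show that $s \mapsto \bbE_Q[\calA(e^{i\langle s,\cdot\rangle_\calX})(X)]$ is continuous on $\calX$ via dominated convergence, using for $s$ in a ball of radius $M$ the bound $M^{2}\|C\|_{\op} + M(\|x\|_\calX + \|CDU(x)\|_\calX)$, which is $Q$-integrable by Assumption \ref{ass:P_Q_assumptions}. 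Since $\mu$ has full support, a continuous function vanishing on a $\mu$-conull set must vanish everywhere, so $\bbE_Q[\calA(e^{i\langle s,\cdot\rangle_\calX})(X)] = 0$ for every $s \in \calX$.

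By Proposition \ref{prop:fourier_solution} this is equivalent to $\bbE_Q[\calA f(X)] = 0$ for all $f \in \calF C_b^\infty(\calX)$, namely condition \eqref{eq:integral_condition}. After verifying \eqref{eq:U_condition} for $Q$ from the moment bounds in Assumption \ref{ass:P_Q_assumptions}, Proposition \ref{prop:unique} yields $Q = P$. The vectorised case is parallel: vanishing of the integrand in \eqref{eq:KSD_vec_int} together with the same continuity argument applied componentwise implies $\bbE_Q[\Gamma(e^{i\langle s,\cdot\rangle_\calX})(X)] = 0$ for every $s \in \calX$, and taking the inner product with $s$ in $\calX_{\bbC}$ recovers the criterion \eqref{eq:gamma_crit}, after which the same uniqueness argument applies.

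The main technical obstacle I anticipate is the continuity step, together with the bookkeeping needed to confirm that the moment bounds in Assumption \ref{ass:P_Q_assumptions} are strong enough both for the dominated convergence argument and for verifying condition \eqref{eq:U_condition} required by Proposition \ref{prop:unique}. In particular, the condition \eqref{eq:U_condition} is an $L^{2}$-in-direction statement whereas the assumption provides only an $L^{1}$ control on $\|CDU(X)\|_\calX$ under $Q$; reconciling these, either by refining the estimate or by exploiting the linear structure of $\langle y,\cdot\rangle_\calX$, is the one delicate point the proof must handle carefully.
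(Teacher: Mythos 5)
Your proposal follows essentially the same route as the paper: both directions rest on the spectral representation of Theorem \ref{thm:Tik_KSD}, upgrade $\mu$-a.e.\ vanishing of the integrand to vanishing for every $s$ via continuity in $s$ together with the full support of $\mu$, and then invoke Propositions \ref{prop:fourier_solution} and \ref{prop:unique}; your forward direction cites Proposition \ref{prop:unique} where the paper instead runs the integration-by-parts of Proposition \ref{prop:IBP} explicitly, but these amount to the same thing. Your closing concern that condition \eqref{eq:U_condition} is an $L^2$-in-direction requirement while Assumption \ref{ass:P_Q_assumptions} only controls $\lVert CDU(X)\rVert_{\calX}$ in $L^{1}(Q)$ is well taken --- the paper's own proof passes over this verification silently --- so your treatment is, if anything, more careful than the published argument on that point.
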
 

\begin{example}
    If $T\in L^{+}_{1}(\calX)$ is injective then $N_{T}$ has full support \citep[Proposition 1.25]{DaPrato2006} and so by Example \ref{exp:ker_spec} the SE-$T^{1/2}$ and IMQ-$T^{1/2}$ kernels correspond to measures with full support so both induce a KSD that separates $Q$ from $P$.
\end{example}

As mentioned before, the condition that $k$ is the Fourier transform of a measure is quite restricting. For example, the SE-$T$ only satisfies this if $T$ is trace class which precludes for example $T=I_{\calX}$ the identity operator. This is because the random variable which would correspond to SE-$I_{\calX}$ would have variance $1$ when projected to any basis direction and hence would have infinite norm almost surely and not lie in $\calX$. To extend Theorem \ref{thm:spec_separates} to a wider range of kernels we use a limiting argument, analogous to that of \citet[Theorem 4]{Wynne2022}. More specifically, the idea is to write the kernels as a limit of kernels using hyperparameters $T_{n}$ which are all trace class and to then show that the KSD using $T_{n}$ is a limit of the KSD using $T$.

\begin{theorem}\label{corr:separates}
Suppose Assumptions \ref{ass:X} and \ref{ass:P_Q_assumptions} 
and that $T\in L(\calX)$ is such that $T^{*}$ is surjective. If $k$ is either the SE-$T$ or IMQ-$T$ kernel then 
    \begin{align*}
        \emph{KSD}_{\calA_v,K}(Q,P) = 0 & \iff Q=P.
    \end{align*}
\end{theorem}

This yields flexibility in the choice of $T$ which acts as a hyper-parameter. Some choices of $T$ in the context of kernel-based two-sample testing for functional data were investigated in \citet{Wynne2022}. The theorem only includes the vectorised case as the proof technique does not directly apply to the non-vectorised case, see the proof in the supplement \citep{Wynne2023Supp} for more discussion.

\section{Numerical simulations}\label{sec:numerics}

This section describes the methodology of using KSD to perform statistical tests. Synthetic examples are given for goodness-of-fit testing for Gaussian and non-Gaussian targets and an illustrative example of the ability of KSD to quantify the simulation error path sampling algorithms. Code for all the experiments may be found at \url{https://github.com/georgewynne/Infinite_Dimensional_KSD}.

\subsection{Testing methodology}
The KSD goodness-of-fit testing framework established in \citet{Chwialkowski2016,Liu2016} will be adopted which is now outlined. For the rest of this section we shall only investigate the vectorised operator $\calAv$ version of KSD due to it being easier to implement since it involves less derivatives. We stress that the same testing methodology applies to the non-vectorised case too. 

Given i.i.d.\ samples $\{X_{i}\}_{i=1}^{n}$ from $Q$ consider the $U$-statistic
\begin{align}
	\widehat{\text{KSD}}_{\calAv,K}(Q,P)^{2} = \frac{1}{n(n-1)}\sum_{1\leq i\neq j\leq n}h_{v}(X_{i},X_{j}),\label{eq:U_stat}
\end{align}
which is an unbiased estimator of \eqref{eq:vec_double_int}, where $h_{v}$ is the Stein kernel formed using the vectorised Stein operator. From standard $U$-statistic theory \citep{Serfling1980}, assuming $\bbE_{Q}[h_{v}(X,X')^{2}]<\infty$ the limiting distributions under the null and alternative can be derived, but are hard to simulate from. To alleviate this it is standard to use a bootstrap procedure \citep{Liu2016}
\begin{align*}
	\widehat{\text{KSD}}_{\calAv,K}(Q,P)^{2}_{B} = \frac{1}{n^{2}}\sum_{1\leq i\neq j\leq n}(w_{i}-1)(w_{j}-1)h(X_{i},X_{j}),
\end{align*}
where $w_{1},\ldots,w_{n}\sim \text{Multi}(n;1/n,\ldots,1/n)$. Then, after generating multiple bootstrap samples, the user will reject the null hypothesis if the test statistic \eqref{eq:U_stat} falls outside a certain percentile of the empirical histogram of the bootstrap samples, otherwise the null hypothesis is not rejected. Under the assumptions outlined above which ensure KSD is positive when $Q\neq P$, the limiting distributions imply that the test is consistent in the sense that when $Q\neq P$ the power converges to one in the limit of more data \citep[Proposition 4.2]{Liu2016}. This logic applies in the present potentially infinite dimensional case since $U$-statistic theory only examines the randomness of $h_{v}$ evaluated on the samples, which is a real-valued random variable, and is agnostic of the space the samples themselves lie in \cite{Huskova1993}. Additionally, the result holds in the limit of number of data points and number of bootstrap repetitions therefore the nominal size of the tests, set below at $5\%$ will not necessarily be observed exactly.

The computational cost of this test is $O(n^{2}BH)$ where $n$ is the number of data points, $B$ is the number of bootstrap repetitions and $H$ is the cost of evaluating $h_{v}$ on any pair of points. This final cost $H$ will depend on the kernel being used and which numerical methods are used to calculate functional norm terms. For example, if $\calX = L^{2}([0,1])$ then inner products for this space will need to be calculated and different quadrature methods have different costs. A version of the KSD test which has linear cost in $n$ has been investigated by \citet{Wittawat2017}.

While there exists multiple two-sample functional testing frameworks, see for example \citet{Wynne2022} and references therein, as far as the authors are aware, there does not currently exist one-sample goodness-of-fit tests for non-Gaussian Gibbs measures as studied in this paper. This is a great strength of KSD since non-Gaussian Gibbs measures on functional spaces are often hard to sample from. Therefore, if one only had a two-sample goodness-of-fit test then one would have to perform this difficult sampling problem to be able to perform the test, which could make the overall testing process infeasible. To make comparison to existing methods which cannot be applied in the one-sample Gibbs case we first investigate the Gaussian case. After this a non-Gaussian Gibbs measure case is investigated, for which the methods used in the Gaussian case are not applicable. In this case only the performance of KSD is investigated.

\subsection{Synthetic data goodness-of-fit experiments}

The SE-$\gamma^{-1}T$ kernel $k_{\text{SE}}(x,y) = e^{-\frac{1}{2\gamma^{2}}\norm{Tx-Ty}_{\calX}^{2}}$ and IMQ-$\gamma^{-1}T$ kernel $k_{\text{IMQ}}(x,y) = (\gamma^{-2}\norm{Tx-Ty}_{\calX}^{2}+1)^{-1/2}$ are used with $\gamma\in\bbR$ chosen via the median-heuristic. This means $\gamma = \text{Med}\{\norm{TX_{i}-TX_{j}}_{\calX}, 1\leq i\neq j\leq n\}$ where $\{X_{i}\}_{i=1}^{n}$ are the i.i.d.\ samples from the unknown measure $Q$. This is a commonly used heuristic in kernel-based statistical methods. 

Two choices of $T$ will be used, $T_{1} = I_{\calX}$, the identity operator and $T_{2}x = \sum_{i=1}^{\infty}\eta_{i}\langle x,e_{i}\rangle_{\calX}e_{i}$ where $\eta_{i} = \lambda_{i}^{-1}$ for $1\leq i\leq 50$ and $\eta_{i} = 1$ for $i>50$ with $e_{i},\lambda_{i}$ the eigensystem of Brownian motion. This choice of $T_{2}$ will emphasise, in an increasing manner, higher frequency activity with respect to the Brownian motion basis. The cut off at the 50th frequency, as opposed to applying this whitening operator to all frequencies, is done so that most of the signal will be impacted while ensuring that $\norm{T_{2}(X_{i}-X_{j})}_{\calX}$ is finite with probability one, which would not be the case if all frequencies were whitened and would thus make the tests invalid. 

The purpose of these two choices is to investigate the effect of treating the original data without any changes, using $T_{1}$, and to use a norm in the kernel that emphasises high frequency deviations from the target measure, $T_{2}$. In general identifying an optimal choice of hyperparameter is an open problem and we believe it an important research question. 

The performance of the tests is recorded in tables. When the test is for the null hypothesis, the closest value to the nominal $5\%$ level is bold, for the other experiments the highest power value is bold.

\subsection*{Brownian motion target}
Let $\calX = L^{2}([0,1])$ and the target measure $P=N_{C}$, meaning $U = 0$, shall be Brownian motion over $[0,1]$ so the covariance operator $C$ has eigenvalues $\lambda_{i} = (i-0.5)^{-2}\pi^{-2}$ and eigenfunctions $e_{i}(t) = \sqrt{2}\sin((i-0.5)\pi t)$. The number of samples $n$ will be specified in each experiment, $2000$ bootstraps are performed to calculate the rejection threshold in each experiment and each test is repeated $500$ times to calculate test power and the nominal type one error rate is set to 5$\%$.

In the specification of the experiments $B_t$ denotes standard Brownian motion. All function samples are observed at $100$ points on a uniform grid across $[0,1]$ so function reconstruction is not required. We compare to a small-ball probability based method of \citep{Bongiorno2018}, a Cram\'{e}r von-Mises test using spherical projections \citep{Ditzhaus2018} and a Cram\'{e}r von-Mises test based on Gaussian process projections \citep{Bugni2009}. Experiment \ref{exp:null} represents the null hypothesis and therefore will quantify type one error, Experiments \ref{exp:clip}, \ref{exp:OU}, \ref{exp:AC1}, \ref{exp:AC2} were studied in \citet{Bongiorno2018} and Experiments \ref{exp:2B}, \ref{exp:B+t} were studied in \citet{Ditzhaus2018}. 

\begin{enumerate}
    \item \label{exp:null} $n=50$ and $Q$ is the law of Brownian motion.
    
    \item \label{exp:clip} $n=50$ and $Q$ is the law of the Brownian motion clipped to $5$ frequencies $\sum_{i=1}^{5}\lambda_{i}^{1/2}\xi_{i}e_{i}$ with $\xi_{i}\overset{i.i.d.}{\sim} N(0,1)$ and $\lambda_{i},e_{i}$ from the eigensystem of $C$ as discussed above.  
    
    \item \label{exp:OU}$n = 25$ and $Q$ is the law of the Ornstein-Uhlenbeck process $$\diff X_t = 0.5(5-X_t)\diff t + \diff B_t.$$
    
    \item \label{exp:AC1} $n = 50$ and $Q$ is the law of $X_t = (1+t^{2})B_t$.

     \item \label{exp:AC2} $n = 50$ and $Q$ is the law of $X_t = (1+\sin(2\pi t))B_t$.
    
    \item \label{exp:2B} $n = 25$ and $Q$ is the law of $2B_t$.
    
    \item \label{exp:B+t} $n = 25$ and $Q$ is the law of $B_t + 1.5t(t-1)$. 
\end{enumerate}

Tables \ref{tab:SB} and \ref{tab:CvM} show that the kernel Stein methodology has superior, or at least comparable, performance while maintaining a controlled type one error across all kernel choices. Experiment \ref{exp:clip} clearly demonstrates the advantage of the $T_{2}$ hyperparameter as it emphasises higher frequencies enough to be able to easily detect the difference between the clipped and standard Brownian motion signals. Experiment \ref{exp:OU} shows all the kernel choices achieve strong performance compared to the small-ball probability based method. Experiments \ref{exp:AC1} and \ref{exp:AC2} also show the kernel based methods perform well. However Experiment \ref{exp:AC2} shows poor performance for $T_{2}$ due to the interaction between the eigenbasis of Brownian motion consisting of sine functions and the deviation in Experiment \ref{exp:AC2} consisting of a sine function. Experiment \ref{exp:2B} shows that the SE kernel has good performance in covariance scale detection whereas the IMQ does not, this is a known phenomenon from finite dimensional investigations \citep{Gretton2012}. Experiment \ref{exp:B+t} shows strong performance in a mean shift detection for the kernel methods. 

\begin{table}[ht]
\begin{center}
 \begin{tabular}{c| c c c c c}
 \toprule
 Experiment & SE-$T_{1}$ &  SE-$T_{2}$ & IMQ-$T_{1}$ & IMQ-$T_{2}$ & SB \\ 
 \midrule
 1  & 0.06 & \textbf{0.05} & 0.052 & 0.048 & 0.032 \\
 2  & 0.056 & \textbf{1.0}  & 0.054 & 0.952 & 0.615 \\
 3  & \textbf{1.0} & \textbf{1.0} & \textbf{1.0} & \textbf{1.0} & 0.023 \\
 4  & 0.9 & \textbf{1.0} & 0.554 & 0.986 & \textbf{1.0} \\
 5  & \textbf{1.0} & 0.542 & \textbf{1.0} & 0.134 & 0.05 \\
 \bottomrule
\end{tabular}
\caption{Proportion of times the null was rejected on Experiments \ref{exp:null}-\ref{exp:AC2}, SB denotes the small-ball probability method of \citet{Bongiorno2018}.}
\label{tab:SB}
\end{center}
\end{table}

\begin{table}[ht]
\begin{center}
 \begin{tabular}{c| c c c c c c} 
 \toprule
 Experiment & SE-$T_{1}$ &  SE-$T_{2}$ & IMQ-$T_{1}$ & IMQ-$T_{2}$ & CvM SP & CvM GP\\ \midrule
 6  & 0.858 & 0.786 & 0.332 & 0.206 &  \textbf{0.895} & 0.763\\
 7  & 0.522 & \textbf{0.99} & 0.608 & 0.87 &  0.98 & 0.858\\
 \bottomrule
\end{tabular}
\caption{Proportion of times the null was rejected on Experiments \ref{exp:2B}-\ref{exp:B+t}, CvM SP denotes the Cram\'{e}r von-Mises test based on spherical projections of \citet{Ditzhaus2018} and CvM GP denotes the Cram\'{e}r von-Mises test based on Gaussian process projections of \citet{Bugni2009}.}
\label{tab:CvM}
\end{center}
\end{table}

\subsection*{Gibbs measure target}
Let $\calX = L^{2}([0,50])$ and set the target measure $P$ to be a conditioned version of the non-linear SDE over the interval $[0,50]$
\begin{align}
    \diff X_{t} = 0.7\sin(X_{t})\diff t + \diff B_{t},\label{eq:non_lin_SDE}
\end{align}
where $B_{t}$ is the driving Brownian motion. The paths shall be conditioned so that $X_{0} = X_{50} = 0$. This conditioned diffusion was studied in \citet{Bierkens2021} and is an example of a bridge diffusion. Bridge diffusions are common in applied mathematics, for example in finance, econometrics and molecular dynamics \citep{Roberts2001,Beskos2006,Pinski2010}. 

The base measure $N_{C}$ is the Brownian bridge over $[0,50]$ and by Girsanov's theorem, 
\begin{align*}
    U(x) & = \frac{1}{2}\int_{0}^{50}0.49\sin(x(s))^{2} + 0.7\cos(x(s))ds\\
    DU(x) & = 0.49\sin(x(\cdot))\cos(x(\cdot)) -0.35\sin(x(\cdot)).
\end{align*}
A reader can consult \citet{Bierkens2021} for the derivation to get the expression for $U$. We use the piecewise-deterministic Markov process sampler from \citet{Bierkens2021} to simulate samples. Deviations from the target distribution will be a deterministic drift given by $Y_{t} = X_{t} +\delta t/50$, for $\delta\in\bbR$ so $\delta = 0$ represents the null hypothesis. Each test uses $n = 100$ samples and is repeated $100$ times to compute the size and power, each trajectory is observed on a uniform grid of $129$ points over $[0,50]$. 

Table \eqref{tab:SDE} shows the performance of the tests. The size of the test is slightly inflated when using the $T_{1}$ hyperparameter. The deviations from the null when $\delta > 0$ can be identified by all configurations of the kernel and there is little difference between the power performance of the different configurations. 

\begin{table}[ht]
\begin{center}
 \begin{tabular}{c |c c c c} 
 \toprule
 $\delta$ & SE-$T_{1}$ &  SE-$T_{2}$ & IMQ-$T_{1}$ & IMQ-$T_{2}$ \\ 
 \midrule
 0  & 0.08 & \textbf{0.05} & 0.07 & \textbf{0.05}\\
  0.05  & 0.17 & 0.17 & \textbf{0.20} & 0.18 \\
  0.1  & 0.43 & 0.4 & \textbf{0.45} & 0.43\\
  0.15  & \textbf{0.81} & 0.77 & 0.79 & 0.77\\
  0.2  & \textbf{0.97} & 0.95 & 0.96 & 0.96\\
 \bottomrule
\end{tabular}
\caption{Proportion of times the null was rejected on the non-linear conditioned SDE experiment, $\delta$ denotes the parameter controlling the deviation from the null.}
\label{tab:SDE}
\end{center}
\end{table}

\subsection*{Euler-Maruyama discretisation error}
With the same scenario as the previous testing experiment for the conditioned non-linear SDE \eqref{eq:non_lin_SDE} the ability of KSD to give a measurement of simulation accuracy is measured when the Euler-Maruyama method along with a rudimentary accept/reject step is used to simulate the conditioned diffusion. The purpose of this experiment is to show the utility of KSD outside of a testing framework by using it solely as a measure of discrepancy between a target and a candidate distribution, without requiring samples from that target distribution.

We simulate approximations to the conditioned non-linear SDE \eqref{eq:non_lin_SDE} by using the \sloppy{Euler-Maruyama} (EM) method with varying number of steps and accepting the trajectories for which $\abs{X(50)} < \varepsilon=0.1$. In practice one would not use this rudimentary simulation method to sample from the conditioned SDE however for our demonstration purposes this is a simple way of seeing how sensitive KSD is to discretisation error when simulating random functions. The range of EM steps that are used is $\{5,10,15,20,25\}$ then the trajectories are linearly interpolated over a regular grid of $100$ points over $[0,50]$ to give the simulated trajectories. We take $2000$ simulations for each number of EM steps to make our estimates of KSD.  

The simulation method is not exact due to $\varepsilon > 0$ and we use a biased $V$-statistic estimate of KSD to ensure that the estimated values are positive. Therefore one does not expect the KSD to fall to zero as the number of EM steps increases. However one would expect the KSD value to decrease to some non-zero quantity as the number of EM steps increases. For this experiment the IMQ-$\gamma^{-1}T$ kernel for $T \in\{T_{1},T_{2}\}$ is used with $\gamma = 1$ for simplicity.  

In Figure \ref{fig:KSD_EM} we see that for both choices of $T$ the KSD value decreases to a positive constant. As expected $T_{2}$ is more discerning of the choice of steps in the sense that there is greater difference in the KSD values for each different number of steps. This is to be expected since $T_{2}$ is designed to be more discerning of higher frequency fluctuations of the signals whereas $T_{1}$ is not. Hence why for $T_{1}$ all step sizes greater than $5$ have a similar KSD but the KSD continues to decrease for $T_{2}$. This figure shows  that increasing the resolution of the EM method past $15$ steps is not overall increasing the similarity of the simulated data compared to the target distribution and for finer resolutions the inexact accept/reject step is causing the difference between the simulated samples and target distribution. This example shows that for this specific experiment the $T_{2}$ parameter provides a more discerning KSD than $T_{1}$. Though this will not always be the case, since experiments will always depend on the target measure of interest, the construction of $T_{2}$ was made to be sensitive to high-frequency deviations from the target measure and hence can be seen as a reasonable choice when identifying the quality of a sampling method.

\begin{figure}
    \centering
    \includegraphics[scale = 0.25]{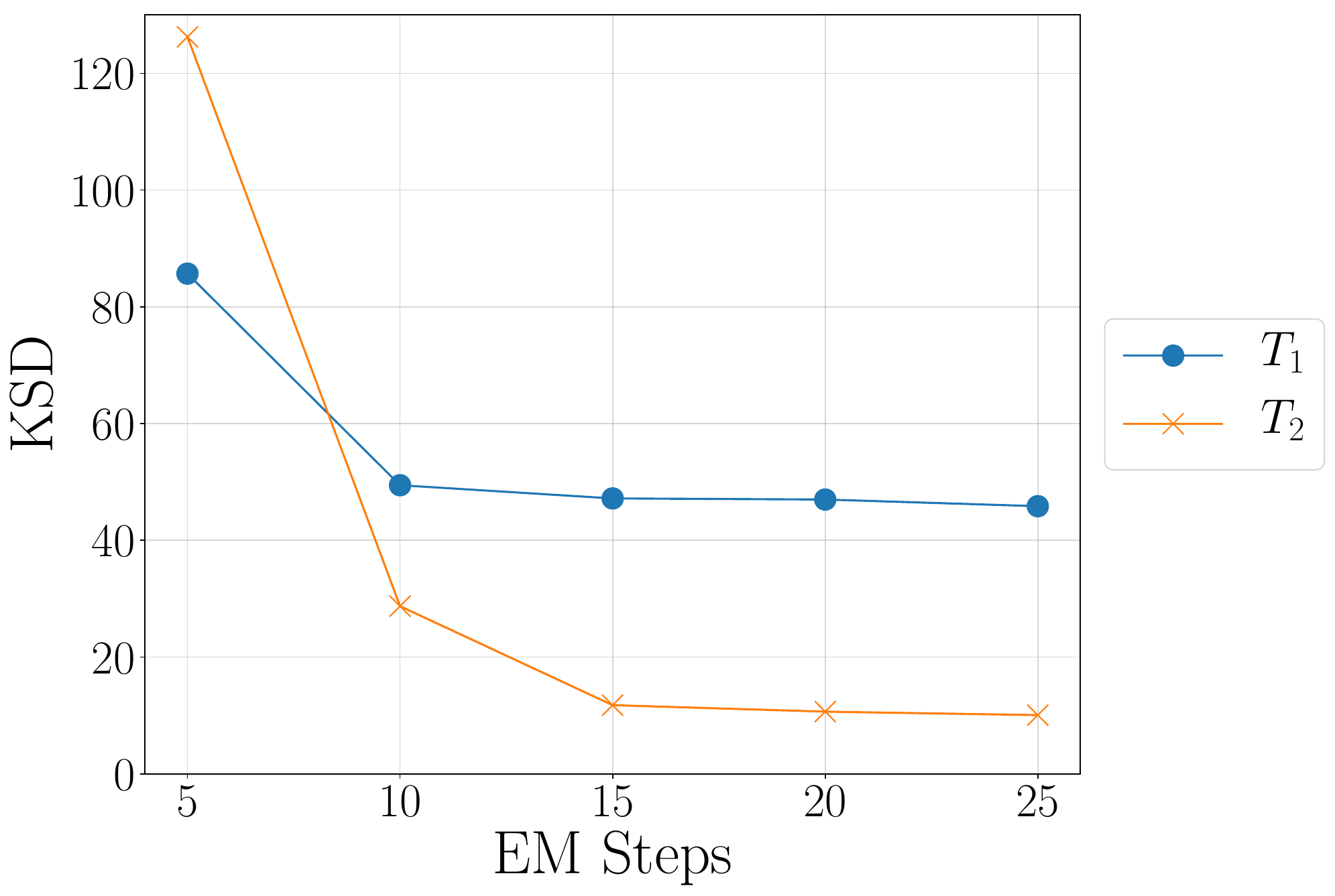}
    \caption{A plot of KSD using the IMQ kernel against the number of steps in the Euler-Maruyama simulation to simulate the target measure. The target measure is the conditioned SDE \eqref{eq:non_lin_SDE}. The KSD value was estimated using $2000$ samples of the Euler-Maruyama simulation, keeping the trajectories with $\abs{X(50)}<0.1$.}
    \label{fig:KSD_EM}
\end{figure}

\section{Conclusion}\label{sec:conclusion}
We have formulated kernel Stein discrepancy for measures on infinite dimensional, separable Hilbert spaces. A Fourier representation was derived in Theorem \ref{thm:Tik_KSD} from which conditions were identified to ensure that KSD can separate measures. This addresses both main aims of the paper, to theoretically justify the extension of the KSD methodology to infinite dimensional data and to obtain a more interpretable representation of KSD in which the action of the Stein operator and kernel are distinct. The derivations also hold in the finite dimensional case. Numerical simulations were performed in Section \ref{sec:numerics} which validate the utility of the KSD approach for infinite dimensional data. There are many further questions which we believe are outside of the scope of this paper. First, the generalisation of KSD beyond the base Gaussian measure case. Our approach using the generator method with infinite dimensional Langevin diffusions revolves around specific, sophisticated results which require a base Gaussian measure \citep{Albeverio1999uniqueness,Bogachev1995regularity}. Therefore we believe a distinct approach must be taken to this problem. Second, a central question is the topology that KSD imbues upon the space of measures over $\calX$. There have been many investigations on the topological properties of KSD in the finite dimensional context. A recent addition by \citet{Barp2022} makes clear many results in the finite dimensional case. These depend on densities and associated score functions of measures and so it is unclear how far the results can be adapted to the infinite dimensional case.

Future work on this topic would involve applying the Fourier representation as a tool to answer questions regarding KSD and analyse algorithms that use KSD. For example, a central question regarding KSD is when used as a distance between probability measures does it metrise the weak topology \citep{Gorham2017}. This would likely translate into a question on the tails of the Fourier measure of a kernel both in the finite and infinite dimensional case. Another example is the variational inference technique using KSD, called Stein variational gradient descent (SVGD) \citep{Liu2016SVGD}, which could have its dynamics analysed using the Fourier representation.

\section*{Acknowledgements}

The authors would like to thanks Sebastiano Grazzi for help adapting code from \citet{Bierkens2021} and Chris J. Oates for recommending the plot in Figure \ref{fig:Test_Functions}. Thanks also the the associate editor and reviewers for helpful comments to improve this paper. The research of George Wynne was conducted when at Imperial College London and was supported by an EPSRC Industrial CASE award [EP/S513635/1] in partnership with Shell UK Ltd. The research of Miko{\l}aj Kasprzak was supported by the FNR grant FoRGES (R-AGR- 3376-10) at Luxembourg University and the European Union’s Horizon 2020 research and innovation programme under the Marie Sk{\l}odowska-Curie grant agreement [101024264- Stein-ML]. Andrew B. Duncan was supported by Wave 1 of The UKRI Strategic Priorities Fund under the EPSRC Grant EP/T001569/1 and EPSRC Grant EP/W006022/1, particularly the “Ecosystems of Digital Twins” theme within those grants \& The Alan Turing Institute.

{
\bibliographystyle{abbrvnat}
\bibliography{inf_dim_refs.bib}
}

\appendix
\section{Appendix}
\setcounter{section}{1}
\subsection{Derivatives of measures and integration-by-parts} \label{subsec:deriv_measures}
At its core, Stein's lemma is a direct application of the integration-by-parts formula.   In the infinite dimensional setting, there are some additional hurdles which must be overcome to obtain a suitable generalisation.  The key challenge arises from the lack of an infinite-dimensional analogue to the Lebesgue measure and its translation invariant properties. Therefore, one cannot appeal to the divergence theorem for Lebesgue measures to recover an integration-by-parts formula. To alleviate this we must use logarithmic derivatives of measures \citep[Chapter 6, Chapter 7]{Bogachev2010}. Our presentation of this concept is focused on our particular context, where $\calX$ is a separable Hilbert space. Logarithmic gradients can be used in far more generality but we leave the interested reader to the references to explore this, in particular \citep[Chapter 6]{Bogachev2010}. We start with the notion of Fomin differentiable measures, which play the role of derivatives of density functions in finite dimensions.

\begin{definition}\label{def:meas_diff}
    A measure $\mu$ on $\calX$ is called differentiable along a vector $h\in\calX$ if, for every set  $A$ in the Borel sigma algebra, there exists a finite limit
    \begin{align*}
        d_{h}\mu(A) \coloneqq \lim_{t\rightarrow 0}\frac{\mu(A+th)-\mu(A)}{t}.
    \end{align*}
\end{definition}
The resulting measure $d_{h}\mu$ is absolutely continuous with respect to $\mu$ \citep[Corollary 3.3.2]{Bogachev2010}. Its Radon-Nikodym derivative is denoted $\beta^{\mu}_{h}$ and is known as the logarithmic derivative of $\mu$ along $h$. An example is when $\mu = N_{C}$ is a centred Gaussian measure, where the vectors along which $\mu$ is differentiable is the Cameron-Martin space \citep[Theorem 3.1.9]{Bogachev2010} and $\beta_{Ch}^{N_{C}}(x) = -\langle h,x\rangle_{\calX}$. The logarithmic derivative is the infinite dimensional analogue to a score function, used often in statistics. For measures absolutely continuous with respect to Gaussians one can use \citet[Corollary 6.1.4]{Bogachev2010} to deduce the logarithmic derivative. For example, $P = e^{-U}N_{C}$ has logarithmic derivative $\beta_{Ch}^{P}(x) = -\langle h,x+CDU(x)\rangle_{\calX}$.

These logarithmic derivatives can then be used to form integration by parts results. The following result is \citet[Theorem 6.1.2]{Bogachev2010} adapted to our context and $\partial_{h}f$, for $f\colon\calX\rightarrow\bbR$, shall denote the directional derivative in direction $h$ which is equal to $\langle h,Df(x)\rangle_{\calX}$ when $f$ is Fr\'{e}chet differentiable. 
\begin{proposition}\label{prop:IBP}
    Let $h\in\calX$ and suppose $\mu$ is differentiable in the sense of Definition \ref{def:meas_diff} in direction $h$ and that $f$ is a $\mu$-integrable function such that for $\mu$-almost all $x\in\calX$ the function $t\rightarrow f(x+th)$ is differentiable, $f\in L^{1}(\calX;d_{h}\mu)$ and $\partial_{h}f\in L^{1}(\calX;\mu)$ then
    \begin{align*}
        \int_{\calX}\partial_{h}f(x)d\mu(x) = -\int_{\calX}f(x)\beta_{h}^{\mu}d\mu(x).
    \end{align*}
\end{proposition}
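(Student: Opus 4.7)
The plan is to prove the integration-by-parts formula by introducing the auxiliary function
\begin{align*}
F(t) \coloneqq \int_{\calX} f(x+th)\, d\mu(x), \qquad t \in \bbR,
\end{align*}
and computing $F'(0)$ in two distinct ways so that equating the two expressions produces the desired identity. The integrability hypotheses $f \in L^1(\calX; d_h \mu)$ and $\partial_h f \in L^1(\calX; \mu)$ are precisely what will be needed to justify the two differentiations under the integral sign.

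First I would compute $F'(0)$ directly, using the hypothesis that $t \mapsto f(x+th)$ is differentiable for $\mu$-almost every $x$ with derivative $\partial_h f(x)$ at $t=0$. Writing the difference quotient $\tfrac{1}{t}\bigl(f(x+th)-f(x)\bigr)$ and appealing to a mean-value-type bound together with dominated convergence (controlled by an $L^1(\mu)$ envelope built from $\partial_h f$) allows me to pass the derivative inside the integral and obtain
\begin{align*}
F'(0) = \int_{\calX} \partial_h f(x)\, d\mu(x).
\end{align*}

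For the second computation, I would rewrite $F(t)$ via a change of variables as an integral against the translated measure $\nu_t \coloneqq (\tau_{th})_* \mu$, where $\tau_{th}(x) = x+th$, so that $\nu_t(A) = \mu(A - th)$ and $F(t) = \int_{\calX} f(y)\, d\nu_t(y)$. Definition \ref{def:meas_diff} then gives $\tfrac{d}{dt}\big|_{t=0}\nu_t(A) = -d_h\mu(A)$ for every Borel set $A$. Applying this first to simple functions and then extending to the $L^1(d_h\mu)$-integrable $f$ via approximation, and finally invoking the Radon--Nikodym representation $d(d_h\mu) = \beta_h^\mu\, d\mu$, yields
\begin{align*}
F'(0) = -\int_{\calX} f(y)\, d(d_h\mu)(y) = -\int_{\calX} f(x)\, \beta_h^\mu(x)\, d\mu(x).
\end{align*}
Equating the two expressions for $F'(0)$ produces the claimed identity.

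The main obstacle is the justification of the limit interchange in the second computation. Unlike the finite-dimensional setting, there is no translation-invariant reference measure on $\calX$, so I cannot invoke a pointwise derivative of a density. Instead the argument must proceed through the set-theoretic Definition \ref{def:meas_diff} and be lifted to general integrands by approximation, while ensuring that the convergence of $\tfrac{1}{t}(\nu_t - \mu)$ to $d_h\mu$ is strong enough, when integrated against $f$, to commute with the limit. The hypothesis $f \in L^1(\calX; d_h\mu)$ together with a uniform bound on the total variation of $\tfrac{1}{t}(\nu_t - \mu)$ near $t=0$ (which is the content of Fomin differentiability, see \citet[Chapter 3]{Bogachev2010}) is what makes this rigorous; everything else is standard measure-theoretic bookkeeping.
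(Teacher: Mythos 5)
First, note that the paper does not actually prove this proposition: it is imported (``adapted to our context'') from \citet[Theorem 6.1.2]{Bogachev2010}, so there is no internal proof to compare against, only the citation. Your two-computations-of-$F'(0)$ strategy is the standard heuristic behind that theorem, and your sign bookkeeping is correct ($\nu_t(A)=\mu(A-th)$, hence $\tfrac{d}{dt}\nu_t(A)\vert_{t=0}=-d_h\mu(A)$, giving the minus sign in the formula). The measure-theoretic half is also essentially salvageable: Fomin differentiability does upgrade to differentiability of the shifted measures in total variation norm, which handles bounded $f$, and the hypothesis $f\in L^{1}(\calX;d_{h}\mu)$ is exactly what controls the truncation error for unbounded $f$.

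The genuine gap is in the first computation. You propose to dominate the difference quotients $t^{-1}\bigl(f(x+th)-f(x)\bigr)$ by ``an $L^1(\mu)$ envelope built from $\partial_h f$'' via a mean-value bound. That bound is $\sup_{0\le s\le t}\lvert\partial_h f(x+sh)\rvert$, and nothing in the hypotheses places this envelope --- or even the individual translates $x\mapsto\partial_h f(x+sh)$ --- in $L^{1}(\calX;\mu)$: you only know $\partial_h f\in L^{1}(\calX;\mu)$, and in the absence of a translation-invariant reference measure, integrability does not transfer to translates. This is precisely where the infinite-dimensional setting bites. The proof in \citet{Bogachev2010} avoids differentiating under the integral sign altogether: it disintegrates $\mu$ into conditional measures on the lines $x+\bbR h$ (these have absolutely continuous densities exactly because $\mu$ is Fomin differentiable along $h$), performs one-dimensional integration by parts on each line, and reassembles via Fubini, with the hypotheses $f\in L^{1}(\calX;d_{h}\mu)$ and $\partial_h f\in L^{1}(\calX;\mu)$ licensing the reassembly. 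To repair your argument you would need either to add an integrability assumption on the translated derivatives (weakening the result) or to switch to the conditional-measure route.
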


\subsection{Proofs for Section \ref{sec:KSD_for_GPs}}
In this section the well-posedness of the operator \eqref{eq:A_operator} is established as well as the conclusion of Theorem \ref{thm:KSD_OU}.  Multiple intermediate lemmas will be required to establish that the kernel can reproduce Fr\'{e}chet derivatives. For reproducing derivatives in finite dimensions the corresponding result is well known, see \citet[Lemma 4.34]{Steinwart2008} and \citet{Zhou2008}. The first step is establishing that, given the assumptions in Theorem \ref{thm:KSD_OU}, the partial Fr\'{e}chet derivative of the kernel is in the RKHS. 

\begin{lemma}\label{lem:derivative_in_RKHS}
	Under Assumption \ref{ass:X} if $k$ satisfies Assumption \ref{ass:vec_k_ass}, then $D_{1}k(x,\cdot)[e]\in\calH_{k}$ for all $x,e\in\calX$ and if $k$ satisfies Assumption \ref{ass:nonvec_k_ass} then $D_{1}^{2}k(x,\cdot)[e,u]\in\calH_{k}$ for all $x,e,u\in\calX$.
\end{lemma}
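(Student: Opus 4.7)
The plan is to adapt the standard finite-dimensional argument for reproducing Fr\'echet derivatives (cf.\ \citet[Lemma 4.34]{Steinwart2008}) to the present Hilbert-space setting. The core mechanism is completeness of $\calH_k$: if a difference-quotient family lies in $\calH_k$ and is Cauchy in the $\calH_k$-norm, then it converges to some element of $\calH_k$; continuity of point evaluation via the reproducing property then identifies that limit pointwise with the Fr\'echet derivative of interest.

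For the first claim I would fix $x, e \in \calX$ and consider
\[
g_t(y) \coloneqq \frac{k(x+te, y) - k(x,y)}{t}, \qquad t \neq 0,
\]
which lies in $\calH_k$ since $\calH_k$ is a linear space containing $k(x+te, \cdot)$ and $k(x, \cdot)$. Four applications of the reproducing property give
\[
\langle g_t, g_s\rangle_k = \frac{1}{ts}\bigl[k(x+te, x+se) - k(x+te, x) - k(x, x+se) + k(x, x)\bigr].
\]
Setting $F(a,b) = k(x+ae, x+be)$ and rewriting this mixed second difference as $\tfrac{1}{s}\int_0^s\!\tfrac{1}{t}\bigl(\partial_b F(t,b) - \partial_b F(0,b)\bigr)\,db$, I would pass to the limit $t,s\to 0$ using continuity of the mixed partial $D_2 D_1 k$ granted by Assumption \ref{ass:vec_k_ass}, obtaining $\langle g_t, g_s\rangle_k \to D_2 D_1 k(x,x)[e,e]$. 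This forces $\lVert g_t - g_s\rVert_k^2 \to 0$, so completeness of $\calH_k$ yields a limit $g\in\calH_k$. Since $g_t(y) = \langle g_t, k(y,\cdot)\rangle_k \to \langle g, k(y,\cdot)\rangle_k = g(y)$ and also $g_t(y) \to D_1 k(x,y)[e]$ by the very definition of the Fr\'echet derivative, we conclude $D_1 k(x,\cdot)[e] = g \in \calH_k$.

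For the second claim I would iterate, first recording the by-product derivative-reproducing identity $\langle f, D_1 k(\xi,\cdot)[e]\rangle_k = Df(\xi)[e]$ for every $f\in\calH_k$ (obtained by sending $t\to 0$ in $\langle f, g_t\rangle_k = (f(\xi+te)-f(\xi))/t$). Specialising to $f = D_1 k(\xi', \cdot)[e']$ gives
\[
\bigl\langle D_1 k(\xi, \cdot)[e], D_1 k(\xi', \cdot)[e']\bigr\rangle_k = D_2 D_1 k(\xi', \xi)[e', e].
\]
Now fix $x, e, u \in \calX$ and set $h_t(y) \coloneqq \bigl(D_1 k(x+tu, y)[e] - D_1 k(x, y)[e]\bigr)/t$, which lies in $\calH_k$ by the first part. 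Expanding $\langle h_t, h_s\rangle_k$ via the display above produces a mixed second difference of $(a,b)\mapsto D_2 D_1 k(x+au, x+bu)[e,e]$, whose relevant mixed partial reduces by the chain rule to a fourth-order partial of $k$ of type $D_1^i D_2^j k$ with $i,j\in\{1,2\}$, all continuous under Assumption \ref{ass:nonvec_k_ass}. The same two-dimensional argument as in part one then shows $\{h_t\}$ is Cauchy in $\calH_k$, and the reproducing property identifies the pointwise limit of the resulting element of $\calH_k$ as $D_1^2 k(x,y)[e,u]$.

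The main obstacle is not conceptual but calculational: one must justify the convergence of the mixed second-order difference quotient to the appropriate mixed partial under the \emph{minimal} regularity $k\in C_b^{(1,1)}$ (respectively $C_b^{(2,2)}$), where only the mixed partials of interest are a priori available rather than the full array of pure partials. The integral representation suggested above, together with uniform continuity of the mixed partial on the relevant bounded set, is the cleanest route; a Taylor-with-integral-remainder alternative would also work. Everything else is linearity of $\calH_k$, the reproducing property, and completeness.
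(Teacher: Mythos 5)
Your proof is correct, but it takes a genuinely different route to membership than the paper does. The paper first shows only that the family $t^{-1}(k(x+te,\cdot)-k(x,\cdot))$ is uniformly bounded in $\calH_k$-norm (via two applications of the mean value theorem, using $\sup\norm{D_2D_1k}<\infty$), then invokes weak compactness of closed balls to extract a subsequence with a weak limit $g\in\calH_k$, and identifies $g$ pointwise through the reproducing property; strong convergence of the difference quotients is deferred to a separate subsequent lemma (Lemma \ref{lem:conv_in_rkhs}), proved there by expanding $\norm{g_t - D_1k(x,\cdot)[e]}_k^2$ with iterated mean-value/integral representations and dominated convergence. You instead show directly that $\langle g_t,g_s\rangle_k$ converges jointly as $t,s\to 0$ to $D_2D_1k(x,x)[e,e]$ via the double-integral representation of the mixed second difference, conclude the family is Cauchy, and obtain a strong limit from completeness. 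Your argument buys something: it collapses the paper's Lemma \ref{lem:derivative_in_RKHS} and Lemma \ref{lem:conv_in_rkhs} into a single step, avoids the subsequence extraction and the (unstated in the paper) uniqueness-of-weak-limit argument needed to get convergence of the whole family, and is closer to the classical \citet[Lemma 4.34]{Steinwart2008} template. The trade-off is that the Cauchy route needs continuity of $D_2D_1k$ at the diagonal, whereas the paper's boundedness-plus-weak-compactness route needs only the sup bound for the membership claim itself; both are supplied by Assumption \ref{ass:vec_k_ass} (respectively \ref{ass:nonvec_k_ass}), so nothing is lost. Your iteration for the second-order claim, replacing $k$ by $D_1k(\cdot,\cdot)[e]$ and using the derivative-reproducing identity as a by-product, mirrors the paper's one-line remark that the higher-order case is handled the same way, and the fourth-order mixed partials you need are exactly those controlled by $C_b^{(2,2)}$.
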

\begin{proof}
The proof is outlined for the first case only as the latter case is done in the exact same way by replacing $k(x,y)$ with $D_{1}k(x,y)$. The approach of \citet[Theorem 1]{Zhou2008}  is generalised to infinite dimensions. By assumption $D_{1}k$ exists and so equals the Gateaux derivative 
\begin{align}
    D_{1}k(x,y)[e] = \lim_{t\rightarrow 0}t^{-1}(k(x+te,y)-k(x,y)).\label{eq:Gateaux}
\end{align}
Now consider $\norm{t^{-1}(k(x+te,\cdot)-k(x,\cdot))}_{k}$. Squaring and using reproducing property
\begin{align*}
    \norm{t^{-1}(k(x+te,\cdot)-k(x,\cdot))}_{k}^{2} & = t^{-2}\big(k(x+te,x+te)-k(x+te,x)\\
    & - (k(x,x+te)-k(x,x))\big)\\
    & = t^{-2}(R(x+te)-R(x)\big),
\end{align*}
where $R(z) = k(z,x+te)-k(z,x)$. As it was assumed that $k$ has continuous first and second order partial Fr\'{e}chet derivatives, the mean value theorem can be applied to $R$
\begin{align*}
    R(x+te)-R(x) & \leq t\norm{e}\sup_{0\leq s\leq 1}\norm{DR(x+ste)}_{L(\calX,\bbR)}\\
    & = t\norm{e}_{\calX}\sup_{0\leq s\leq 1}\norm{D\big(k(\cdot,x+te)-k(\cdot,x)\big)[x+ste]}_{L(\calX,\bbR)}\\
    & = t\norm{e}_{\calX}\sup_{0\leq s\leq 1}\norm{D_{1}k(x+ste,x+te)-D_{1}k(x+ste,x)}_{L(\calX,\bbR)}\\
    & \leq t^{2}\norm{e}_{\calX}^{2}\sup_{0\leq s,h\leq 1}\norm{D\big(D_{1}k(x+ste,\cdot)\big)[x+hte]}_{L(\calX,L(\calX,\bbR))}\\
    & = t^{2}\norm{e}_{\calX}^{2}\sup_{0\leq s,h\leq 1}\norm{D_{2}D_{1}k(x+ste,x+hte)}_{L(\calX,L(\calX,\bbR))} = t^{2}\norm{e}_{\calX}^{2}c_{k},
\end{align*}
where $c_{k}$ is a finite constant by assumption. Therefore $\norm{t^{-1}(k(x+te,\cdot)-k(x,\cdot))}_{k}^{2}\leq \norm{e}_{\calX}^{2}c_{k}$ so the set $\{t^{-1}(k(x+te,\cdot)-k(x,\cdot))\colon t\in\bbR\}$ is contained in a closed ball in $\calH_{k}$. Therefore weak compactness implies that for any sequence $\{t_{n}\}_{n=1}^{\infty}$ converging to zero there exists a subsequence, for which we abuse notation and still denote by $t_{n}$, and some $g\in\calH_{k}$ such that
\begin{align*}
    \lim_{n\rightarrow\infty}t_{n}^{-1}\langle k(x+t_{n}e,\cdot)-k(x,\cdot),f\rangle_{k} = \langle g,f\rangle_{k},
\end{align*}
for all $f\in\calH_{k}$. In particular, taking $f = k(y,\cdot)$ and using the reproducing property implies$g(y) = D_{1}k(x,y)e$ for all $y\in\calX$ by \eqref{eq:Gateaux} which completes the proof. 
\end{proof}

\begin{lemma}\label{lem:conv_in_rkhs}
Under Assumption \ref{ass:X} if $k$ satisfies Assumption \ref{ass:vec_k_ass} then $t^{-1}\big(k(x+te,\cdot)-k(x,\cdot)\big)$ converges in $\calH_{k}$ to $D_{1}k(x,\cdot)[e]$ as $t\rightarrow 0 $ for all $x,e\in\calX$. If $k$ satisfies Assumption \ref{ass:nonvec_k_ass} then $t^{-1}\big(D_{1}k(x+te,\cdot)[u]-D_{1}k(x,\cdot)[u]\big)$ converges in $\calH_{k}$ to $D_{1}^{2}k(x,\cdot)[e,u]$ as $t\rightarrow 0 $ for all $x,e,u\in\calX$. 
\end{lemma}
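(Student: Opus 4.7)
The plan is to establish strong $\calH_k$-convergence by expanding the squared norm of the remainder and showing that the three resulting limits cancel. I present the first statement in detail; the second follows by the identical argument performed one derivative-order higher, as permitted by Assumption \ref{ass:nonvec_k_ass}.

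Set $A_t \coloneqq t^{-1}(k(x+te,\cdot) - k(x,\cdot))$ and $B \coloneqq D_1 k(x,\cdot)[e]$, both of which lie in $\calH_k$ by Lemma \ref{lem:derivative_in_RKHS}. Expanding
$$\|A_t - B\|_k^2 = \|A_t\|_k^2 - 2\langle A_t, B\rangle_k + \|B\|_k^2,$$
the first summand reduces by bilinearity and the reproducing property to the symmetric second-difference
$$t^{-2}\bigl[k(x+te,x+te) - k(x+te,x) - k(x,x+te) + k(x,x)\bigr],$$
which converges to $D_2 D_1 k(x,x)[e,e]$ by continuity of the mixed partial $D_2 D_1 k$ (available under Assumption \ref{ass:vec_k_ass}).

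For the cross term, the reproducing property applied to $B \in \calH_k$ at the point $y$ yields $\langle k(y,\cdot), B\rangle_k = B(y) = D_1 k(x,y)[e]$, so
$$\langle A_t, B\rangle_k = t^{-1}\bigl[D_1 k(x, x+te)[e] - D_1 k(x,x)[e]\bigr],$$
which converges to $D_2 D_1 k(x,x)[e,e]$ as $t \to 0$ by continuity of $D_2 D_1 k$. For the last summand, the proof of Lemma \ref{lem:derivative_in_RKHS} (with its subsequential limits all coinciding with the unique function $y \mapsto D_1 k(x,y)[e]$) gives $s^{-1}(k(x+se,\cdot) - k(x,\cdot)) \rightharpoonup B$ weakly in $\calH_k$, so pairing against the fixed element $B$ and combining with the explicit cross-term formula above yields
$$\|B\|_k^2 = \lim_{s\to 0} s^{-1}\bigl[D_1 k(x, x+se)[e] - D_1 k(x,x)[e]\bigr] = D_2 D_1 k(x,x)[e,e].$$
Summing the signed limits, $\|A_t - B\|_k^2 \to D_2 D_1 k(x,x)[e,e] - 2 D_2 D_1 k(x,x)[e,e] + D_2 D_1 k(x,x)[e,e] = 0$, which is the desired strong convergence.

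The main subtlety is evaluating $\|B\|_k^2$ without appealing to a derivative-reproducing identity for arbitrary elements of $\calH_k$, which is not yet at our disposal; this is handled by combining the weak-convergence output of Lemma \ref{lem:derivative_in_RKHS} with the plain reproducing property evaluated only on kernel sections $k(y,\cdot)$, whose pairings with $B$ are explicit. The second statement is obtained by replacing $A_t$ with $t^{-1}(D_1 k(x+te,\cdot)[u] - D_1 k(x,\cdot)[u])$ and $B$ with $D_1^2 k(x,\cdot)[e,u]$; membership of both in $\calH_k$ and the weak-limit representation needed for the analogue of $\|B\|_k^2$ follow from the corresponding part of Lemma \ref{lem:derivative_in_RKHS}, and the common limiting constant becomes a fourth-order mixed partial of the form $D_2^2 D_1^2 k(x,x)[e,e,u,u]$, which is continuous and bounded by Assumption \ref{ass:nonvec_k_ass}.
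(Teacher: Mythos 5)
Your proof is correct and follows essentially the same route as the paper: expand $\|A_t - B\|_k^2$ via the reproducing property and show the three limits cancel, using the weak limit from Lemma \ref{lem:derivative_in_RKHS} to evaluate $\|B\|_k^2 = D_2D_1k(x,x)[e,e]$. The one step you assert rather than prove --- that the symmetric second-difference quotient $\|A_t\|_k^2$ converges to $D_2D_1k(x,x)[e,e]$ --- is exactly where the paper spends its effort (a double application of the mean value theorem in integral form followed by dominated convergence), and the hypothesis you cite, continuity and boundedness of $D_2D_1k$, is precisely what makes that argument go through.
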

\begin{proof}
As with the previous proof only the first case is outlined. Again \citet[Theorem 1]{Zhou2008} is followed. Setting $f = D_{1}k(x,\cdot)e$ in the proof of Lemma \ref{lem:derivative_in_RKHS}
\begin{align*}
    \langle D_{1}k(x,\cdot)[e],D_{1}k(x,\cdot)[e]\rangle_{k} & = \lim_{n\rightarrow\infty}t_{n}^{-1}\langle k(x+t_{n}e,\cdot)-k(x,\cdot),D_{1}k(x,\cdot)[e]\rangle_{k}\\
    & =\lim_{n\rightarrow\infty}t_{n}^{-1}\big(D_{1}k(x,x+t_{n}e)[e]-D_{1}k(x,x)[e]\big)\\
    & = D_{2}D_{1}k(x,x)[e,e].
\end{align*}
Therefore,
\begin{align}
    & \norm{t^{-1}\big(k(x+te,\cdot)-k(x,\cdot)\big) - D_{1}k(x,\cdot)[e]}_{k}^{2} \nonumber \\
    & = t^{-2}\big(k(x+te,x+te)-2k(x,x+te)+k(x,x)\big)\label{eq:double_difference}\\
    & - 2t^{-1}\big(D_{1}k(x,x+te)[e] - D_{1}k(x,x)[e]\big) + D_{2}D_{1}k(x,x)[e,e].\label{eq:single_difference}
\end{align}
Next, use the mean value theorem and the dominated convergence theorem to show this converges to zero. The mean value theorem applied to \eqref{eq:double_difference}, applicable due to the continuity of partial Fr\'{e}chet derivatives assumption, gives
\begin{align*}
	k(x+te,x+te) & -2k(x,x+te)+k(x,x) \\
	& = \left[\int_{0}^{1}D_{1}k(x+ste,x+te)-D_{1}k(x+ste,x)ds\right][te],
\end{align*}
and another application of the mean value theorem produces the mixed partial Fr\'{e}chet derivatives
\begin{align*}
k(x+te,x+te) & -2k(x,x+te)+k(x,x)\\
& = \left[\int_{0}^{1}\left[\int_{0}^{1}D_{2}D_{1}k(x+ste,x+ute)du\right][te]ds\right][te].
\end{align*}
By the assumption on the supremum norm of $D_{2}D_{1}k$ we can swap the integrals \citep[Theorem 6]{Diestel1977} and the inputs of the operators
\begin{align*}
	k(x+te,x+te) & -2k(x,x+te)+k(x,x)\\
& = \int_{0}^{1}\int_{0}^{1}D_{2}D_{1}k(x+ste,x+ute)[te,te]duds.
\end{align*}
Proceeding similarly for \eqref{eq:single_difference} reveals
\begin{align*}
    & \norm{t^{-1}\big(k(x+te,\cdot)-k(x,\cdot)\big) - D_{1}k(x,\cdot)[e]}_{k}^{2} \\
    & = \int_{0}^{1}\int_{0}^{1}D_{2}D_{1}k(x+ste,x+ute)[e,e]\\
    & - 2D_{2}D_{1}k(x,x+ste)[e,e] + D_{2}D_{1}k(x,x)[e,e]dsdu.
\end{align*}
The assumption of continuity of the mixed partial Fr\'{e}chet derivatives as well as their boundedness means the dominated convergence theorem implies that the integrands cancel out as $t\rightarrow 0$ which completes the proof. 
\end{proof}

\begin{lemma}\label{lem:derivative_reproduce}
Under Assumption \ref{ass:X} if $k$ satisfies Assumption \ref{ass:vec_k_ass} then $Df(x)$ exists for every $f\in\calH_{k},x\in\calX$ and $\langle D_{1}k(x,\cdot)[e],f\rangle_{k} = Df(x)[e]$ for all $x,e\in\calX$. If $k$ satisfies Assumption \ref{ass:nonvec_k_ass} then $D^{2}f(x)$ exists for every $f\in\calH_{k},x\in\calX$ and $\langle D_{1}^{2}k(x,\cdot)[e,u],f\rangle_{k} = D^{2}f(x)[e,u]$ for all $x,e,u\in\calX$.
\end{lemma}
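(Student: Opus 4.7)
The plan is to combine the reproducing property of the kernel with the $\calH_{k}$-convergence statements already proved in Lemma \ref{lem:conv_in_rkhs} to first produce Gateaux derivatives of $f$, and then to upgrade these to Fr\'{e}chet derivatives using a Taylor-type estimate on the RKHS-valued remainder.

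For the first-derivative statement, I would fix $f\in\calH_{k}$ and $x,e\in\calX$. Using the reproducing property, $t^{-1}(f(x+te)-f(x)) = \langle t^{-1}(k(x+te,\cdot)-k(x,\cdot)),f\rangle_{k}$. Lemma \ref{lem:conv_in_rkhs} gives convergence of the left argument in $\calH_{k}$ to $D_{1}k(x,\cdot)[e]$, so continuity of the inner product yields the Gateaux derivative $\partial_{e}f(x)=\langle D_{1}k(x,\cdot)[e],f\rangle_{k}$. Linearity in $e$ follows from linearity of $D_{1}k(x,\cdot)[\cdot]$, and the bound $\|D_{1}k(x,\cdot)[e]\|_{k}\le c_{k}^{1/2}\|e\|_{\calX}$ obtained inside the proof of Lemma \ref{lem:derivative_in_RKHS} gives boundedness, so the candidate map $L(e):=\langle D_{1}k(x,\cdot)[e],f\rangle_{k}$ lies in $L(\calX,\bbR)$.

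The crux is upgrading Gateaux to Fr\'{e}chet. By the reproducing property and Cauchy--Schwarz,
\begin{align*}
|f(x+h)-f(x)-L(h)| \le \|k(x+h,\cdot)-k(x,\cdot)-D_{1}k(x,\cdot)[h]\|_{k}\cdot\|f\|_{k},
\end{align*}
so it suffices to show the RKHS norm on the right is $o(\|h\|_{\calX})$ as $h\to 0$ in $\calX$. Expanding the squared norm via the reproducing property (and Lemma \ref{lem:derivative_in_RKHS} to reproduce $D_{1}k$) reproduces the identity displayed in the proof of Lemma \ref{lem:conv_in_rkhs}, but with general direction $h$ in place of $te$. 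Applying the mean-value/integral-remainder argument from that proof twice (first in the first coordinate, then in the second) gives
\begin{align*}
\|k(x+h,\cdot)-k(x,\cdot)-D_{1}k(x,\cdot)[h]\|_{k}^{2}
= \int_{0}^{1}\!\!\int_{0}^{1} R(s,u,h)[h,h]\,du\,ds,
\end{align*}
where $R(s,u,h)=D_{2}D_{1}k(x+sh,x+uh)-2D_{2}D_{1}k(x,x+uh)+D_{2}D_{1}k(x,x)$. By the joint continuity of $D_{2}D_{1}k$ at $(x,x)$ (Assumption \ref{ass:vec_k_ass}), $\|R(s,u,h)\|_{L(\calX,L(\calX,\bbR))}\to 0$ uniformly in $s,u\in[0,1]$ as $\|h\|_{\calX}\to 0$. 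Bounding the bilinear form by the operator norm times $\|h\|_{\calX}^{2}$ therefore gives the desired $o(\|h\|_{\calX}^{2})$ estimate, so $f$ is Fr\'{e}chet differentiable at $x$ and $Df(x)[e]=L(e)$.

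The second-derivative statement is handled by iterating the same strategy. From the first part, under Assumption \ref{ass:nonvec_k_ass} the map $g_{u}(x):=Df(x)[u]=\langle D_{1}k(x,\cdot)[u],f\rangle_{k}$ is well-defined for each $u$. Applying the reproducing computation a second time, now to the difference $D_{1}k(x+te,\cdot)[u]-D_{1}k(x,\cdot)[u]$, and invoking the second half of Lemma \ref{lem:conv_in_rkhs} gives the Gateaux-type identity $\langle D_{1}^{2}k(x,\cdot)[e,u],f\rangle_{k}$. The candidate $M(e)[u]:=\langle D_{1}^{2}k(x,\cdot)[e,u],f\rangle_{k}$ is again linear and bounded via the analogue of the Lemma \ref{lem:derivative_in_RKHS} bound applied to $D_{1}^{2}k$, and the Fr\'{e}chet upgrade follows from a completely analogous Taylor-remainder bound, this time on $\|D_{1}k(x+h,\cdot)[u]-D_{1}k(x,\cdot)[u]-D_{1}^{2}k(x,\cdot)[h,u]\|_{k}$, which by the reproducing property equals an iterated integral of differences of $D_{2}^{2}D_{1}^{2}k$; here continuity and boundedness of fourth-order mixed partials (Assumption \ref{ass:nonvec_k_ass}) provides the uniform smallness needed to obtain an $o(\|h\|_{\calX})$ remainder uniformly over $\|u\|_{\calX}\le 1$. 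I expect the main obstacle to be precisely this uniform-in-$u$ Fr\'{e}chet estimate in the second-derivative case, since the remainder now has to be controlled in the operator norm of $L(\calX,\bbR)$ rather than pointwise, but the uniform continuity of the mixed partials on bounded sets (which follows from their global boundedness in $C_{b}^{(2,2)}$) is exactly what is needed to push the argument through.
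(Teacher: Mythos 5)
Your proposal is correct, and the first half coincides with the paper's: both obtain the Gateaux derivative identity $\partial_{e}f(x)=\langle D_{1}k(x,\cdot)[e],f\rangle_{k}$ by writing difference quotients of $f$ as inner products with difference quotients of $k(x,\cdot)$ and invoking Lemma \ref{lem:conv_in_rkhs}. Where you diverge is in upgrading Gateaux to Fr\'{e}chet. The paper shows that the map $x\mapsto D_{G}f(x)$ is continuous from $\calX$ into $L(\calX,\bbR)$, by expanding $\norm{(D_{1}k(x,\cdot)-D_{1}k(y,\cdot))[e]}_{k}^{2}$ into four $D_{2}D_{1}k$ terms and using continuity of the mixed partials, and then appeals to the standard fact that a continuous Gateaux derivative yields Fr\'{e}chet differentiability. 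You instead bound the Fr\'{e}chet remainder directly, $\abs{f(x+h)-f(x)-L(h)}\leq\norm{f}_{k}\norm{k(x+h,\cdot)-k(x,\cdot)-D_{1}k(x,\cdot)[h]}_{k}$, and show the RKHS norm is $o(\norm{h}_{\calX})$ via the same double-integral Taylor identity the paper derives inside Lemma \ref{lem:conv_in_rkhs}. Your route gives pointwise Fr\'{e}chet differentiability without the neighbourhood argument; the paper's route additionally delivers continuity of $Df$ in $x$, which is a stronger conclusion obtained essentially for free. One caveat: your parenthetical claim that uniform continuity of the mixed partials on bounded sets follows from their global boundedness is false in infinite dimensions (bounded sets are not compact, and boundedness does not imply uniform continuity). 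Fortunately this is not what your estimate actually requires: the arguments $(x+sh,x+uh)$ of the integrand converge to the single point $(x,x)$ uniformly in $s,u\in[0,1]$ as $\norm{h}_{\calX}\to 0$, so plain continuity of $D_{2}D_{1}k$ (respectively $D_{2}^{2}D_{1}^{2}k$) at $(x,x)$, which Assumptions \ref{ass:vec_k_ass} and \ref{ass:nonvec_k_ass} provide, already gives the uniform smallness of $R(s,u,h)$ needed to close both the first- and the uniform-in-$u$ second-derivative estimates.
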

\begin{proof}
As before only the first case is outlined explicitly with the latter following analogously. By Lemma \ref{lem:conv_in_rkhs}
\begin{align*}
    \langle D_{1}k(x,\cdot)[e],f\rangle_{k} & = \lim_{t\rightarrow 0}\langle t^{-1}\big(k(x+te,\cdot)-k(x,\cdot)\big),f\rangle_{k} = \lim_{t\rightarrow 0}t^{-1}\big(f(x+te)-f(x)\big),
\end{align*}
so the Gateaux derivative $D_{G}f(x)[e]$ exists and is equal to the desired inner product. To show Fr\'{e}chet differentiability it remains to show the Gateaux derivative is continuous. Start with the bound
\begin{align*}
    \norm{D_{G}f(x)-D_{G}f(y)}_{L(\calX,\bbR)} & = \sup_{\norm{e}_{\calX}=1}\lvert\langle(D_{1}k(x,\cdot)-D_{1}k(y,\cdot))[e],f\rangle_{k}\rvert\\
   &\leq  \norm{f}_{k}\sup_{\norm{e}_{\calX}=1}\norm{(D_{1}k(x,\cdot)-D_{1}k(y,\cdot))[e]}_{k}.
\end{align*}
Expanding the RKHS norm
\begin{align*}
    \norm{(D_{1}k(x,\cdot)-D_{1}k(y,\cdot))[e]}_{k}^{2} & = D_{2}D_{1}k(x,x)[e,e]-D_{2}D_{1}k(x,y)[e,e] \\
    & + D_{2}D_{1}k(y,y)[e,e]-D_{2}D_{1}k(y,x)[e,e]\\
    & \leq \norm{e}_{\calX}^{2}\big(\norm{D_{2}D_{1}k(x,x)-D_{2}D_{1}k(x,y)}_{L(\calX\times\calX,\bbR)}\\
    & +\norm{D_{2}D_{1}k(y,y)-D_{2}D_{1}k(y,x)}_{L(\calX\times\calX,\bbR)}\big).
\end{align*}
Therefore 
\begin{align*}
    \norm{D_{G}f(x)-D_{G}f(y)}_{L(\calX,\bbR)} & \leq \norm{f}_{k}\big(\norm{D_{2}D_{1}k(x,x)-D_{2}D_{1}k(x,y)}_{L(\calX\times\calX,\bbR)}\\
    & +\norm{D_{2}D_{1}k(y,y)-D_{2}D_{1}k(y,x)}_{L(\calX\times\calX,\bbR)}\big)^{1/2},
\end{align*}
and the quantity on the right hand side converges to zero as $\norm{x-y}_{\calX}\rightarrow 0$ by the continuity assumption on $D_{2}D_{1}k$, this completes the proof. 
\end{proof}

\subsubsection{Proof of Lemma \ref{lemm:well_defined}}
First of all, Lemma \ref{lem:derivative_reproduce} implies the first and second order Fr\'{e}chet derivatives that are required in $\calA$ exist. Using the reproducing property, for any orthonormal basis $\{e_{i}\}_{i=1}^{\infty}$ of $\calX$ 
\begin{align*}
D^{2}f(x)[e_{i},e_{i}] = \langle D^{2}_{1}k(x,\cdot)[e_{i},e_{i}],f\rangle_{k} & \leq \norm{D_{1}^{2}k(x,\cdot)[e_{i},e_{i}]}_{k}\norm{f}_{k}\\
& = \sqrt{D_{2}^{2}D_{1}^{2}k(x,x)[e_{i},e_{i},e_{i},e_{i}]}\norm{f}_{k}\\
& \leq C_{k}\norm{f}_{k}
\end{align*}
for a constant $C_{k}$ by Assumption \ref{ass:nonvec_k_ass}. Therefore taking the basis to be the eigensystem of $C$
\begin{align*}
    \bbE_{Q}[\text{Tr}[CD^{2}f(X)]]=\bbE_{ Q}\left[\sum_{i=1}^{\infty}\langle CD^{2}f(X),e_{i}\rangle_{\calX}\right] & = \bbE_{Q}\left[\sum_{i=1}^{\infty}\lambda_{i}D^{2}f(X)[e_{i},e_{i}]\right] \\
    & \leq C_{k}\norm{f}_{k}\sum_{i=1}^{\infty}\lambda_{i} = C_{k}\norm{f}_{k}\text{Tr}[C] < \infty.
\end{align*}
For the $\calAv$ case
\begin{align*}
    DF(x)[e_{i},e_{i}] = DF_{i}(x)[e_{i}] = \langle D_{1}k(x,\cdot)[e_{i}],F_{i}\rangle_{k} & \leq \sqrt{D_{2}D_{1}k(x,x)[e_{i},e_{i}]}\norm{F_{i}}_{k}\\
    & \leq C_{k}'\norm{F_{i}}_{k}\\
    & \leq C_{k}'\norm{F}_{K},
\end{align*}
for some constant $C_{k}'$ by Assumption \ref{ass:vec_k_ass}. Therefore, similar to the $\calA$ case, the trace norm term is bounded by $\bbE_{Q}[\text{Tr}[CDF(X)]]\leq C_{k}'\norm{F}_{K}\text{Tr}[C]$. 

Now the expectation of the inner product term in $\calA$ and $\calAv$ needs to be shown to be finite. In the former, by the reproducing property 
\begin{align*}
    \bbE_{Q}[\langle Df(X),X+CDU(X)\rangle_{\calX}] & = \bbE_{Q}[\langle D_{1}k(X,\cdot)[X+CDU(X)],f\rangle_{k}]\\
    & \leq \bbE_{Q}[D_{2}D_{1}k(X,X)[X+CDU(X),X+CDU(X)]^{1/2}]\norm{f}_{k}\\
    & \leq C_{k}''\norm{f}_{k},
\end{align*}
for a constant $C_{k}''$ by Assumption \ref{ass:nonvec_k_ass}. For the $\calAv$ case
\begin{align*}
    \bbE_{Q}[\langle F(X),X+CDU(X)\rangle_{\calX}] & = \bbE_{Q}[\langle K(X,\cdot)[X+CDU(X)]\rangle_{K}]\\
    & = \bbE_{Q}[k(X,X)^{1/2}\norm{X+CDU(X)}_{\calX}]\norm{F}_{K} < \infty,
\end{align*}
where the last equality is by the reproducing property of operator-valued kernels. The final inequality is by Assumption \ref{ass:vec_k_ass} and Assumption \ref{ass:P_Q_assumptions}. This completes the proof that the two KSD expressions are well-defined. 

\subsubsection{Proof of Theorem \ref{thm:KSD_OU}}
The proof for the $\calA$ case is done first. By the derivative reproducing properties established in Lemma \ref{lem:derivative_reproduce} 
\begin{align*}
    \calA f(x) & = \text{Tr}[CD^{2}f(x)] - \langle x+CDU(x),Df(x)\rangle_{\calX}\\
    & = \sum_{i=1}^{\infty}\lambda_{i}D^{2}f(x)[e_{i},e_{i}] - Df(x)[x+CDU(x)]\\
    & = \langle f,\sum_{i=1}^{\infty}\lambda_{i}D^{2}_{1}k(x,\cdot)[e_{i},e_{i}] - D_{1}k(x,\cdot)[x+CDU(x)]\rangle_{k}\\
    & \eqqcolon \langle f,\xi(x)\rangle_{k}.
\end{align*}
Expectation with respect to $Q$ will need to be swapped with the $\calH_{k}$ inner product to obtain the desired result. To do this we need to show $\bbE_{Q}[\norm{\xi(X)}_{k}]<\infty$ since then $\xi$ is Bochner integrable \citep[Theorem 2.6.5]{Hsing2015} and so the expectation and inner product in the definition of $\text{KSD}_{\calA,k}$ can be swapped \citep[Theorem 3.1.7]{Hsing2015}. Using the reproducing properties
\begin{align*}
    \norm{\xi(x)}_{k}^{2} & = \sum_{i,j=1}^{\infty}\lambda_{i}\lambda_{j}D_{2}^{2}D_{1}^{2}k(x,x)[e_{i},e_{i},e_{j},e_{j}]\\
    & - 2\sum_{i=1}^{\infty}\lambda_{i}D_{2}^{2}D_{1}k(x,x)[x+CDU(x),e_{i},e_{i}]\\
    & + D_{2}D_{1}k(x,x)[x+CDU(x),x+CDU(x)],
\end{align*}
and using Assumption \ref{ass:nonvec_k_ass} there exists a constant $C_{k} > 0$ such that
\begin{align*}
     \norm{\xi(x)}_{k}^{2} & \leq  C_{k}\left(\sum_{i,j=1}^{\infty}\lambda_{i}\lambda_{j} + 2\sum_{i=1}^{\infty}\lambda_{i}\norm{x+CDU(x)}_{\calX}+ \norm{x+CDU(x)}_{\calX}^{2}\right)\\
     & = C_{k}\left(\norm{x+CDU(x)}_{\calX}+\text{Tr}[C]\right)^{2}.
\end{align*}
Then Assumption \ref{ass:P_Q_assumptions} assures us $\bbE_{Q}[\norm{\xi(X)}_{k}]<\infty$ and so
\begin{align*}
    \sup_{\substack{f\in\calH_{k},\\ \norm{f}_{k}\leq 1}}\bbE_{Q}[\calA f(X)] & =  \sup_{\substack{f\in\calH_{k},\\ \norm{f}_{k}\leq 1}}\bbE_{Q}[\langle f,\xi(X)\rangle_{k}] \\
    & =  \sup_{\substack{f\in\calH_{k},\\ \norm{f}_{k}\leq 1}}\langle f,\bbE_{Q}[\xi(X)]\rangle_{k}\\
    & = \norm{\bbE_{Q}[\xi(X)]}_{k},
\end{align*}
where the first equality is by the reproducing properties as outlined above, the second is by swapping expectation and inner product as is allowed for Bocher differentiable functions \citep[Theorem 3.1.7]{Hsing2015} and the final equality is by Cauchy-Schwarz. It does not matter that we haven't used the absolute value around the expectation in the first setting since $\calH_{k}$ is a vector space so the supremum is unchanged. The final stage of the proof is again swapping expectation and using the reproducing properties
\begin{align*}
    \norm{\bbE_{Q}[\xi(X)]}_{k}^{2} = \langle \bbE_{Q}[\xi(X)],\bbE_{Q}[\xi(X')]\rangle_{k} = \bbE_{Q\times Q}[\langle \xi(X),\xi(X')\rangle_{k}]
\end{align*}
where
\begin{align*}
    \langle \xi(x),\xi(x')\rangle_{k} & = \sum_{i,j=1}^{\infty}\lambda_{i}\lambda_{j}D_{2}^{2}D_{1}^{2}k(x,x')[e_{i},e_{i},e_{j},e_{j}]\\
    & - \sum_{i=1}^{\infty}\lambda_{i}D_{2}^{2}D_{1}k(x,x')[x+CDU(x),e_{i},e_{i}]\\
   &  - \sum_{i=1}^{\infty}\lambda_{i}D_{1}^{2}D_{2}k(x,x')[x'+CDU(x'),e_{i},e_{i}]\\
    & + D_{2}D_{1}k(x,x)[x+CDU(x),x+CDU(x)]\\
    & = (\calA\otimes\calA)k(x,x'),
\end{align*}
as required. 

The case for $\calAv$ is entirely analogous with 
\begin{align*}
    \xi_{v}(x) & = \sum_{i=1}^{\infty}e_{i}\lambda_{i}D_{1}k(x,\cdot)[e_{i}] - K(x,\cdot)[x+CDU(x)]\\
    & = \sum_{i=1}^{\infty}e_{i}\Gamma_{i}k(x,\cdot)\in\calH_{K}
\end{align*}
instead of $\xi(x)$.

\subsubsection{Proof of Propostion \ref{prop:ker_example}}
For the $k=k_{\text{SE-}T}$ case the first partial derivatives are $D_{1}k(x,y)[u]  = -D_{2}k(x,y)[u] = -\langle u,T^{*}T(x-y)\rangle_{\calX}k(x,y)$. Making the natural Riesz identification between $L(\calX,\bbR)$ and $\calX$ 
\begin{align*}
\norm{D_{1}k(x,y)}_{\calX} = \norm{D_{2}k(x,y)}_{\calX}\leq \norm{T}_{L(\calX)}\norm{T(x-y)}_{\calX}k(x,y)<c_{1},
\end{align*} 
for some $c_{1}\in\bbR$ not depending on $x,y$ since $te^{-t^{2}/2}$ is bounded for $t\in\bbR$. The second order mixed partial derivatives are
\begin{align*}
	D_{2}D_{1}k(x,y)[u,v] = (\langle T^{*}Tu,v\rangle_{\calX} + \langle T^{*}T(x-y),u\rangle_{\calX}\langle T^{*}T(x-y),v\rangle_{\calX})k(x,y),
\end{align*}
therefore
\begin{align*}
	\norm{D_{2}D_{1}k(x,y)}_{L(\calX\times\calX,\bbR)}\leq \norm{T}_{L(\calX)}^{2}(1+\norm{T(x-y)}_{\calX}^{2})k(x,y) < c_{2},
\end{align*}
for some $c_{2}<\infty$ not depending on $x,y$ since $te^{-t/2}$ is bounded for $t\geq 0$.  

For the $k = k_{\text{IMQ-}T}$ case, $D_{1}k(x,y)u = -\langle u,T^{*}T(x-y)\rangle_{\calX}k(x,y)^{3}$ so $$\norm{D_{1}k(x,y)}_{\calX}\leq \norm{T}_{L(\calX)}\norm{T(x-y)}_{\calX}k(x,y)^{3}<c_{3},$$ for some $c_{3}<\infty$ not depending on $x,y$ since $t(t^{2}+1)^{-3/2}$ is bounded. The $D_{2}$ derivative is handled similarly. The second order mixed partial derivatives are 
\begin{align*}
	D_{2}D_{1}k(x,y)[u,v] = \langle T^{*}Tu,v\rangle_{\calX}k(x,y)^{3} - 3\langle T^{*}T(x-y),u\rangle_{\calX}\langle T^{*}T(x-y),v\rangle_{\calX}k(x,y)^{5} ,
\end{align*}
therefore
\begin{align*}
	\norm{D_{2}D_{1}k(x,y)}_{L(\calX\times\calX,\bbR)}\leq \norm{T}_{L(\calX)}^{2}k(x,y)^{3} + 3\norm{T}^{2}_{L(\calX)}\norm{T(x-y)}_{\calX}^{2}k(x,y)^{5}<c_{4},
\end{align*}
for some $c_{4}<\infty$ independent of $x,y$ since $t^{2}(t^{2}+1)^{-5/2}$ is bounded. The calculations for $h_{v}$ in each of the two cases follow immediately from the calculations above of the partial derivatives.

\subsection{Proofs for Section \ref{sec:KSD_Tik}}
This section covers the proofs for Proposition \ref{prop:fourier_solution}, Proposition \ref{prop:unique}, Theorem \ref{thm:Tik_KSD}, Theorem \ref{thm:spec_separates} and Theorem \ref{corr:separates}. Multiple results are simple applications of results from papers in the literature, in such cases we have made an effort to translate our notation into the notation and language of the referenced results. 

\subsubsection{Proof of Proposition \ref{prop:fourier_solution}}
The proof is identical to \citet[Remark 3.13]{Bogachev1995regularity}. All that is required is that, in the notation of \citep{Bogachev1995regularity}, $\norm{B}_{\calX}\in L^{1}(\calX;Q)$ where, in our notation, $B(x) = -x-CDU(x)$ and so Assumption \ref{ass:P_Q_assumptions} takes care of this. 

\subsubsection{Proof of Proposition \ref{prop:unique}}
The fact that $P$ solves the equation is a simple consequence of an infinite-dimensional integration-by-parts formula obtained using logarithmic gradients \citep[Chapter 6]{Bogachev2010}. This is analogous to how it is shown that the target measure results in a zero KSD value in the existing literature, see for example \citep[Lemma 5.1]{Chwialkowski2016}. For more detail on the integration by parts method in infinite dimensions see Section \ref{subsec:deriv_measures} and the proof of Theorem \ref{thm:spec_separates}. 

The uniqueness result is the content of \citep[Theorem 4.5]{Albeverio1999uniqueness}. The assumptions of this result state that the base measure must satisfy a logarithmic-Sobolev inequality, be shift-equivalent along a dense subspace of $\calX$, possess a logarithmic gradient that is $L^{2}$ integrable with respect to the base measure and that the closure of the domain of the associated Dirichlet form must contain the square root of the density of $P$ with respect to the base measure. 

This is all satisfied in our scenario since our base measure is $N_{C}$, the centered Gaussian with non-degenerate covariance operator $C$. It is well known such measures enjoy the logarithmic-Sobolev inequality \citep[Theorem 10.30]{DaPrato2006}, are shift-equivalent along the corresponding Cameron-Martin space \citep[Theorem 2.8]{DaPrato2006} which is dense in $\calX$ \citep[Remark 1.3.2]{Maniglia2004}, the logarithmic gradient is simply $\beta^{N_{C}}_{Ch}(x) = -\langle h,x\rangle_{\calX}$ which immediately satisfies the integrability condition. Finally, the closure of the domain of the corresponding Dirichlet form is the Sobolev space $W^{1,2}_{C}(\calX)$ \citep[Proposition 1.2.3]{DaPrato2002}. Therefore Assumption \ref{ass:P_Q_assumptions} ensures that the square root of the density belongs to $W^{1,2}_{C}(\calX)$ along with the other required integrability conditions. 

Having established the conditions of \citet[Theorem 4.5]{Albeverio1999uniqueness} are satisfied the proof is complete as this result gives us the desired uniqueness condition.

\subsubsection{Proof of Theorem \ref{thm:Tik_KSD}}
This proof shall cover the case for the non-vectorised operator $\calA$ in \eqref{eq:KSD_nonvec_int} which is the more complex case as it involves more derivatives. The case for $\calAv$ in \eqref{eq:KSD_vec_int} follows using similar calculations. The method of proof is to expand \eqref{eq:KSD_nonvec_int} and arrive at \eqref{eq:nonvec_double_int}. 

Start by noting
\begin{align}
    \calA(e^{i\langle s,\cdot\rangle_{\calX}})(x) = ie^{i\langle s,x\rangle_{\calX}}\left(i\langle Cs,s\rangle_{\calX} - \langle x+CDU(x),s\rangle_{\calX}\right),\label{eq:A_exp}
\end{align}
so the complex conjugate is
\begin{align}
     \overline{\calA(e^{i\langle s,\cdot\rangle_{\calX}})(x)} = -ie^{-i\langle s,x\rangle_{\calX}}\left(-i\langle Cs,s\rangle_{\calX} - \langle x+CDU(x),s\rangle_{\calX}\right).\label{eq:conj_A_exp}
\end{align}
Using this, one can expand the integrand of \eqref{eq:KSD_nonvec_int} as
\begin{align*}
    \left|\bbE_{Q}[\calA (e^{i\langle s, \cdot\rangle_{\calX}})(X)] \right|_{\mathbb{C}}^2 & = \bbE_{Q}[\calA(e^{i\langle s,\cdot\rangle_{\calX}})(X)]\overline{\bbE_{Q}[\calA(e^{i\langle s,\cdot\rangle_{\calX}})(X')]}\\
    & = \int_{\calX}\int_{\calX} \calA(e^{i\langle s,\cdot\rangle_{\calX}})(x)\overline{\calA(e^{i\langle s,\cdot\rangle_{\calX}})(x')}dQ(x)dQ(x').
\end{align*}
Using \eqref{eq:A_exp} and \eqref{eq:conj_A_exp} this double integral is equal to
\begin{align}
    \int_{\calX}\int_{\calX}e^{i\langle s,x-x'\rangle_{\calX}}\langle Cs,s\rangle_{\calX}^{2} & + ie^{i\langle s,x-x'\rangle_{\calX}}\langle x+CDU(x),s\rangle_{\calX}\langle Cs,s\rangle_{\calX} \label{eq:expand_double}\\
    & - ie^{i\langle s,x-x'\rangle_{\calX}}\langle x'+CDU(x'),s\rangle_{\calX}\langle Cs,s\rangle_{\calX}\nonumber \\
    & + e^{i\langle s,x-x'\rangle_{\calX}}\langle x+CDU(x),s\rangle_{\calX}\langle x'+CDU(x'),s\rangle_{\calX}dQ(x)dQ(x').\nonumber
\end{align}

Recalling that \eqref{eq:KSD_nonvec_int} has the integral with respect to $\mu$ we now swap the order of integration so that we integrate with respect to $\mu$ first and then $Q$ twice afterwards. This is possible given the integrability assumptions of $U$ in Assumption \ref{ass:P_Q_assumptions} and the derivative assumptions on $k$ in Assumption \ref{ass:nonvec_k_ass} which translate to bounded moment assumptions on $\mu$. Bringing the integral with respect to $\mu$ on the inside means we can now identify terms in \eqref{eq:expand_double} with terms in \eqref{eq:nonvec_double_int}.

To make the comparison of terms consider first the expressions for the partial derivatives of the kernel
\begin{align*}
    k(x,x') & = \int_{\calX}e^{i\langle s,x-x'\rangle_{\calX}}d\mu(s)\\
    D_{1}D_{2}k(x,x')[u,v] & = \int_{\calX}\langle s,u\rangle_{\calX}\langle s,v\rangle_{\calX}e^{i\langle s,x-x'\rangle_{\calX}}d\mu(s)\\
    D_{1}^{2}D_{2}k(x,x')[u,v,w] & = \int_{\calX}ie^{i\langle s,x-x'\rangle_{\calX}}\langle s,u\rangle_{\calX}\langle s,v\rangle_{\calX}\langle s,w\rangle_{\calX}d\mu(s)\\
    D_{2}^{2}D_{1}k(x,x')[u,v,w] & = \int_{\calX}-ie^{i\langle s,x-x'\rangle_{\calX}}\langle s,u\rangle_{\calX}\langle s,v\rangle_{\calX}\langle s,w\rangle_{\calX}d\mu(s)\\
    D_{2}^{2}D_{1}^{2}k(x,x')[u,v,w,z] & = \int_{\calX}e^{i\langle s,x-x'\rangle_{\calX}}\langle s,u\rangle_{\calX}\langle s,v\rangle_{\calX}\langle s,w\rangle_{\calX}\langle s,z\rangle_{\calX}d\mu(s).
\end{align*}
Noting that $\langle Cs,s\rangle_{\calX} = \sum_{i=1}^{\infty}\lambda_{i}\langle s,e_{i}\rangle_{\calX}^{2}$ where $\lambda_{i},e_{i}$ is the eigensystem of $C$ we can equate the terms in \eqref{eq:nonvec_double_int} and \eqref{eq:expand_double}. Specifically,
\begin{align*}
    & \int_{\calX}e^{i\langle s,x-x'\rangle_{\calX}}\langle x+CDU(x),s\rangle_{\calX}\langle x'+CDU(x'),s\rangle_{\calX}d\mu(s) \\
    &\qquad\qquad = D_{1}D_{2}k(x,x')[x+CDU(x),x'+CDU(x')]\\
    & \int_{\calX}ie^{i\langle s,x-x'\rangle_{\calX}}\langle x+CDU(x),s\rangle_{\calX}\langle Cs,s\rangle_{\calX}d\mu(s) \\
    &\qquad\qquad = -\sum_{i=1}^{\infty}\lambda_{i}D_{2}^{2}D_{1}k(x,x')[x+CDU(x),e_{i},e_{i}]\\
    & \int_{\calX}-ie^{i\langle s,x-x'\rangle_{\calX}}\langle x'+CDU(x'),s\rangle_{\calX}\langle Cs,s\rangle_{\calX}d\mu(s) \\
    &\qquad\qquad = -\sum_{i=1}^{\infty}\lambda_{i}D_{1}^{2}D_{2}k(x,x')[x'+CDU(x'),e_{i},e_{i}]\\
    & \int_{\calX}e^{i\langle s,x-x'\rangle_{\calX}}\langle Cs,s\rangle_{\calX}^{2}d\mu(s) \\
    &\qquad\qquad = \sum_{i,j=1}^{\infty}\lambda_{i}\lambda_{j}D_{2}^{2}D_{1}k(x,x')[e_{i},e_{i},e_{j},e_{j}].
\end{align*}
This shows that each of the terms in \eqref{eq:expand_double} matches with terms in \eqref{eq:nonvec_double_int} which completes the proof.

\subsubsection{Proof of Theorem \ref{thm:spec_separates}}
Suppose that the KSD expressions are zero. Then the integrands in Theorem \ref{thm:Tik_KSD} must be zero outside of a $\mu$ measure zero set. Since $\mu$ has full support every open set that is non-empty has positive measure. As the two integrands are continuous functions of $s$ we can conclude that for all $s$ the integrands are zero. Then using Proposition \ref{prop:fourier_solution} we can conclude $\calA^{*}Q = 0$ and Proposition \ref{prop:unique} then implies $Q=P$.

On the other hand if $Q=P$ then Proposition \ref{prop:IBP} can be employed to conclude the KSD values are zero. The case for the non-vectorised operator $\calA$ is treated explicitly with the vectorised operator case being similar. Using Theorem \ref{thm:Tik_KSD} it is enough to show $\bbE_{P}[\calA(e^{i\langle s,\cdot\rangle_{\calX}})(X)] = 0 \:\forall s\in\calX$. For ease of notation set $g_{s} = e^{i\langle s,\cdot\rangle_{\calX}}$ then
\begin{align*}
    \bbE_{P}[\calA g_{s}(X)] & = \int_{\calX}\Tr[CD^{2}g_{s}(x)] - \langle x+CDU(x),Dg_{s}(x)\rangle_{\calX}dP(x)\\
    & = \sum_{i=1}^{\infty}\int_{\calX}\langle Ce_{i},D^{2}g_{s}(x)[e_{i}]\rangle_{\calX}-\langle x+CDU(x),e_{i}\rangle_{\calX}Dg_{s}(x)[e_{i}]dP(x),
\end{align*}
so employing Proposition \ref{prop:IBP} with $h = Ce_{i}, f = Dg_{s}(\cdot)[e_{i}]$ makes each term in this sum zero as required. We may employ Proposition \ref{prop:IBP} in this case since $Dg_{s}(x)[e_{i}] = i\langle s,e_{i}\rangle_{\calX}e^{i\langle s,x\rangle_{\calX}}$ is bounded as a function of $x$ for each $s$ and differentiable as a function of $x$. 

As mentioned the case for the KSD built using $\calAv$ is similar. Since $\Gamma$ returns $\calX$-valued functions one would need to check all the coefficients with respect to $\{e_{i}\}_{i=1}^{\infty}$ are zero. This is done using Proposition \ref{prop:IBP} with $h=Ce_{i}$ again and $f = g_{s}$.

\subsubsection{Proof of Theorem \ref{corr:separates}}
The proof strategy is similar to the strategy employed in \citet[Theorem 9]{Wynne2022}. The idea of the proof is to write KSD using $T$ as a limit of a KSD using a $T_{n}=I_{n}^{1/2}T$, where $T_{n}$ converges to $T$ since $I_{n}$ will converge to $I_{\calX}$. The key points is that for every $n$, $T_{n}$ satisfies Theorem \ref{thm:spec_separates}. Therefore, KSD using $T$ will be written as a limit of KSD expressions, each of which can separate measures. The final step is to ensure that the limit of these KSD expressions results in an expression which can still separate measures. This is done by providing an explicit upper bound on KSD using $T$ in the case where $Q\neq P$. The SE-$T$ kernel is dealt with first and then the IMQ-$T$ kernel as a corollary. For ease of notation denote the SE-$T$ kernel as $k_{T}$. 

To this end define $I_{n} = \sum_{i=1}^{\infty}\omega_{i}^{(n)}e_{i}\otimes e_{i}$ where $w_{i}^{(n)} = 1\: i\leq n$ and $i^{-2}\:\forall i > n$ and $e_{i}$ is an orthonormal basis to be specified later in this proof. Note that $I_{n}\in L^{+}_{1}(\calX)$ and is injective and approximates $I_{\calX}$ since as $n$ increases more of its eigenvalues become $1$. Denote by $h_{T}$ the Stein kernel obtained from the SE-$T$ kernel and the vectorised Stein operator. By Proposition \ref{prop:ker_example}
\begin{align*}
h_{T}(x,y)  & = k_{T}(x,y)\big(\langle x,y\rangle_{\calX} - \langle TC(x-y),T(x-y)\rangle_{\calX} \\
	& - \langle TC(CDU(x)-CDU(y)),T(x-y)\rangle_{\calX} + \text{Tr}(T^{*}TC^{2}) - \norm{CT^{*}T(x-y)}_{\calX}^{2}\big).
\end{align*}
Note this is the pointwise limit of $h_{I_{n}^{1/2}T}$. The convergence is immediate by pointwise convergence of $I_{n}$ to $I_{\calX}$, the only term which perhaps requires more justification is the trace term
\begin{align*}
\lvert\text{Tr}(T^{*}I_{n}TC)-\text{Tr}(T^{*}TC)\rvert&  = \lvert\text{Tr}(I_{n}T^{*}TC)-\text{Tr}(T^{*}TC)\rvert \\
&  = \sum_{i=1}^{\infty}\langle (I-I_{n})T^{*}CTe_{i},e_{i}\rangle_{\calX} \\
& = \sum_{i=n+1}^{\infty}(1-i^{-2})\langle T^{*}CTe_{i},e_{i}\rangle_{\calX}\leq \sum_{i=n+1}^{\infty}\langle T^{*}CTe_{i},e_{i}\rangle_{\calX} \rightarrow 0,
\end{align*}
since $T^{*}CT$ is trace class. 

With this pointwise convergence established
\begin{align}
    \text{KSD}_{\calAv,k_{T}}(Q,P)^{2} & = \int_{\calX}\int_{\calX}h_{T}(x,x')dQ(x)dQ(x')\label{eq:double_int_rep}\\
    & = \int_{\calX}\int_{\calX}\lim_{n\rightarrow\infty}h_{I_{n}^{1/2}T}(x,x')dQ(x)dQ(x')\label{eq:double_int_lim}\\
    & = \lim_{n\rightarrow\infty}\int_{\calX}\int_{\calX}h_{I_{n}^{1/2}T}(x,x')dQ(x)dQ(x')\label{eq:DCT}\\
    & = \lim_{n\rightarrow\infty}\text{KSD}_{k_{I_{n}^{1/2}T}}(Q,P)^{2},\label{eq:KSD_I_n}
\end{align}
where $K_{T} = k_{T}I_{\calX}$, \eqref{eq:double_int_rep} is Corollary \ref{cor:stein_kernels}, \eqref{eq:double_int_lim} is the pointwise convergence of the Stein kernel, \eqref{eq:DCT} is the dominated convergence theorem which applies by the integrability assumptions made and \eqref{eq:KSD_I_n} is again Corollary \ref{cor:stein_kernels}.

So far we have shown the KSD using SE-$T$ kernel may be written as the limit of the KSD using the SE-$I_{n}^{1/2}T$ kernel. Note that the SE-$I_{n}^{1/2}T$ kernel is equal to $\widehat{N}_{T^{*}I_{n}T}$. This means we can employ previous results which held for kernels that are Fourier transforms of measures. Namely, by Theorem \ref{thm:Tik_KSD}
\begin{align}
    \text{KSD}_{\calA_v, K_{T}}(Q,P)^{2} & =\lim_{n\rightarrow\infty} \int_{\calX}\left\|\mathbb{E}_Q\left[\Gamma(e^{i\left<s,\cdot\right>_{\calX}})(X)\right]\right\|_{\calX_{\mathbb{C}}}^2 dN_{T^{*}I_{n}T}(s)\label{eq:lim_KSD}\\
    & = \lim_{n\rightarrow\infty} \int_{\calX}\left\|\mathbb{E}_Q\left[\Gamma(e^{i\left<T^{*}s,\cdot\right>_{\calX}})(X)\right]\right\|_{\calX_{\mathbb{C}}}^2 dN_{I_{n}}(s),\label{eq:lim_KSD_T}
\end{align}
where \eqref{eq:lim_KSD_T} is the Gaussian change of variable formula \citep[Proposition 1.1.8]{DaPrato2006}. If $P=Q$ then by the same argument of Theorem \ref{thm:spec_separates} the integrand in \eqref{eq:lim_KSD} is zero for every $n$ and so the KSD is the limit of a sequence whose every value is zero and thus is zero. This completes the proof in one direction.

Now suppose $Q\neq P$. For ease of notation set 
\begin{align*}
    F(s) & \coloneqq \left\|\mathbb{E}_Q\left[\Gamma(e^{i\left<s,\cdot\right>_{\calX}})(X)\right]\right\|_{\calX_{\mathbb{C}}}^2\\
    & = \left\|Cs\widehat{Q}(s)+D\widehat{Q}(s)+i\int_{\calX}CDU(x)e^{i\langle s,x\rangle_{\calX}}dQ(x)\right\|_{\calX_{\bbC}}^{2}.
\end{align*}

The idea for this direction of the proof is to lower bound KSD using $T$ by something positive. This will be done again by using the limiting argument. The idea will be to find a point where the integrand in the Fourier representation is positive. A result by \citet[Remark 3.13]{Bogachev1995regularity} states $\mathbb{E}_Q\left[\Gamma(e^{i\left<s,\cdot\right>_{\calX}})(X)\right] = 0\:\forall s\in\calX$ implies $Q=P$. Using the contrapositive means there exists some $s_{0}\in\calX$ such that $ \mathbb{E}_Q\left[\Gamma(e^{i\left<s_{0},\cdot\right>_{\calX}})(X)\right]\neq 0$ hence $F(s_{0}) > 0$. Since $T^{*}$ is surjective there exists some $u_{0}\in\calX$ such that $T^{*}u_{0} = s_{0}$ hence $F(T^{*}u_{0}) > 0$. Now that we have found a single point where the integrand is positive, the idea is to find a large set in $\calX$ on which $F(T^{*}\cdot)$ remains positive. This set should be large enough so that it has measure with respect to all of the $N_{I_{n}}$ that is bounded below by a positive constant. The way to find this large set is to deduce that the integrand is very slow varying, so that if it is positive at $s_{0}$ it has to be positive in a big set around $s_{0}$. The next result makes this notion concrete.

\begin{proposition}\label{prop:V_continuity}
Suppose $\bbE_{Q}[\norm{X}_{\calX}^{2}]<\infty$ and $\bbE_{Q}[\norm{CDU(X)}_{\calX}^{2}]<\infty$ then there exists an injective $V\in L^{+}_{1}(\calX)$ such that $F$ is continuous with respect to the norm $\norm{\cdot}_{V} = \langle V\cdot,\cdot\rangle_{\calX}^{1/2}$. 
\end{proposition}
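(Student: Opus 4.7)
The plan is to write $F(s) = \|G(s)\|_{\calX_{\bbC}}^2$ with
\begin{align*}
G(s) := Cs\,\widehat{Q}(s) + D\widehat{Q}(s) + i\int_{\calX} CDU(x)e^{i\langle s, x\rangle_{\calX}}dQ(x),
\end{align*}
and show that $G$ is Lipschitz from $(\calX, \norm{\cdot}_V)$ into $\calX_{\bbC}$ for a carefully chosen $V$; continuity of $F$ then follows since it is the composition of $G$ with the (continuous) squared-norm map. The operator $V$ must simultaneously dominate three ingredients: the second-moment operator of $Q$, the operator $C^2$, and an injective trace-class positive operator added to guarantee injectivity of the sum.

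Concretely, let $M_Q \in L^{+}_{1}(\calX)$ be defined by $\langle M_Q h, h\rangle_{\calX} = \bbE_Q[\langle h, X\rangle_{\calX}^2]$; this is trace class with $\Tr(M_Q) = \bbE_Q[\norm{X}_{\calX}^2] < \infty$. Since $C \in L^{+}_{1}(\calX)$ is Hilbert--Schmidt, $C^2 \in L^{+}_{1}(\calX)$. Fix any orthonormal basis $\{e_n\}$ of $\calX$ and set $W := \sum_{n=1}^{\infty} 2^{-n}\, e_n \otimes e_n$, which lies in $L^{+}_{1}(\calX)$ and is injective. Define $V := M_Q + C^2 + W$, which is in $L^{+}_{1}(\calX)$ and injective since $W$ is and the other summands are nonnegative.

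The main analytic step is the elementary bound, derived from $|e^{it}-1|^2 \leq t^2$,
\begin{align*}
\bbE_Q\bigl[\,|e^{i\langle s - s_0, X\rangle_{\calX}} - 1|^2\,\bigr] \leq \bbE_Q\bigl[\langle s - s_0, X\rangle_{\calX}^2\bigr] = \norm{s - s_0}_{M_Q}^2 \leq \norm{s - s_0}_V^2.
\end{align*}
Splitting $G = G_1 + G_2 + G_3$ according to the three summands, one then bounds: (i) $\norm{G_1(s) - G_1(s_0)}_{\calX}$ by adding and subtracting $Cs\,\widehat{Q}(s_0)$ and using $\norm{C(s - s_0)}_{\calX}^2 = \langle C^2(s - s_0), s - s_0\rangle_{\calX} \leq \norm{s - s_0}_V^2$ together with $|\widehat{Q}(s) - \widehat{Q}(s_0)| \leq \norm{s - s_0}_V$ (Jensen applied to the key estimate above); (ii) and (iii) by Cauchy--Schwarz in $L^2(\calX; Q)$ combined with the key estimate, giving $\norm{G_2(s) - G_2(s_0)}_{\calX} \leq \bbE_Q[\norm{X}_{\calX}^2]^{1/2} \norm{s - s_0}_V$ and the analogous bound for $G_3$ with $\bbE_Q[\norm{CDU(X)}_{\calX}^2]^{1/2}$. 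Summing yields Lipschitz control of $G$ in $\norm{\cdot}_V$, hence continuity of $F$ in $\norm{\cdot}_V$.

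There is no deep obstacle: once $V$ is chosen to dominate both $C^2$ and $M_Q$ and to possess an injective trace-class summand, the rest is bookkeeping. The moment hypotheses $\bbE_Q[\norm{X}_{\calX}^2] < \infty$ and $\bbE_Q[\norm{CDU(X)}_{\calX}^2] < \infty$ are precisely what converts the complex-exponential increments into $M_Q$-controlled, hence $V$-controlled, quantities.
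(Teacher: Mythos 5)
Your proof is correct and follows essentially the same route as the paper's: both rest on the elementary bound $|e^{it}-1|^2=2(1-\cos t)\le t^2$ combined with Cauchy--Schwarz under the second-moment hypotheses to get Lipschitz control of $D\widehat{Q}$ and of the $CDU$ term with respect to the norm of a trace-class second-moment operator, then take $V$ to be a sum of the relevant trace-class operators with an adjustment to force injectivity. The only (minor, and arguably cleaner) deviation is that you control the increments of $\widehat{Q}$ itself by the same second-moment estimate, where the paper instead invokes the Minlos--Sazonov theorem to obtain continuity of $\widehat{Q}$ with respect to a Sazonov norm $\norm{\cdot}_{U}$ for some $U\in L^{+}_{1}(\calX)$.
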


\begin{proof}
By definition $C$ is continuous with respect to $\norm{\cdot}_{C^{2}}$ and by the Minlos-Sazonov theorem, see \citet[Theorem VI.1.1]{Vakhania1987}, there exists some $U\in L^{+}_{1}(\calX)$ such that $\widehat{Q}$ is continuous with respect to $\norm{\cdot}_{U}$. Recall that $D\widehat{Q}(s) = \int_{\calX} ixe^{i\langle x,s\rangle}dQ(x)$ so
\begin{align}
    \norm[\big]{D\widehat{Q}(s)-D\widehat{Q}(t)}_{\calX_{\bbC}}\leq \int_{\calX} \norm[\big]{x(e^{i\langle x,s\rangle_{\calX}}-e^{i\langle x,t\rangle_{\calX}})}_{\calX_{\bbC}}dQ(x).\label{eq:D_bound}
\end{align}
The square of the integrand is
\begin{align*}
    \norm[\big]{x(e^{i\langle x,s\rangle_{\calX}}-e^{i\langle x,t\rangle_{\calX}})}_{\calX_{\bbC}}^{2} = 2\norm{x}_{\calX}^{2}(1-\cos(\langle x,s-t\rangle_{\calX})),
\end{align*}
since
\begin{align*}
    \text{Re}\left(x(e^{i\langle x,s\rangle_{\calX}}-e^{i\langle x,t\rangle_{\calX}})\right) & = x(\cos(\langle x,s\rangle_{\calX})-\cos(\langle x,t\rangle_{\calX}))\\
    \text{Im}\left(x(e^{i\langle x,s\rangle_{\calX}}-e^{i\langle x,t\rangle_{\calX}})\right) & = x(\sin(\langle x,s\rangle_{\calX})-\sin(\langle x,t\rangle_{\calX})).
\end{align*}
Substituting into \eqref{eq:D_bound} 
\begin{align*}
    \norm[\big]{D\widehat{Q}(s)-D\widehat{Q}(t)}_{\calX_{\bbC}} & \leq \int_{\calX} \norm{x}_{\calX}2^{1/2}(1-\cos(\langle x,s-t\rangle_{\calX}))^{1/2}dQ(x) \\
    & \leq \int_{\calX} \norm{x}_{\calX}\lvert\langle x,s-t\rangle_{\calX}\rvert dQ(x)\\
    & \leq \bbE_{Q}[\norm{X}_{\calX}^{2}]^{1/2}\bbE_{Q}[\langle x,s-t\rangle_{\calX}^{2}]^{1/2}\\
    & \leq c_{Q}\langle W(s-t),s-t\rangle_{\calX}^{1/2},
\end{align*}
for some $W\in L^{+}_{1}(\calX)$ the covariance operator associated with $Q$ which exists due to the finite second moment assumption \citep[Lemma 1.1.4]{Maniglia2004}, $c_{Q}$ is some finite constant from this second moment assumption and we used the standard inequality $2(1-\cos(x))\leq x^{2},x\in\bbR$. This shows that $D\widehat{Q}$ is continuous, Lipschitz continuous in fact, with respect to the norm induced by $W$. 

The same exact argument can be made to show that $i\int_{\calX}CDU(x)e^{i\langle\cdot,x\rangle_{\calX}}dQ(x)$ is also Lipschitz continuous with respect to some $R\in L^{+}_{1}(\calX)$ by simply replacing $x$ in the integrand with $CDU(x)$ in the above derivation. Therefore $\norm{C(\cdot)\widehat{Q}(\cdot)+D\widehat{Q}(\cdot)+i\int_{\calX}CDU(x)e^{i\langle\cdot,x\rangle_{\calX}}dQ(x)}_{\calX_{\bbC}}^{2}$ is continuous with respect to the norm induced by $C^{2} + U + W+R\in L^{+}_{1}(\calX)$ from which we can obtain an injective element of $L^{+}_{1}(\calX)$ by modifying the eigenvalues to all be positive. 
\end{proof}

By Proposition \ref{prop:V_continuity} $F$ is continuous with respect to $\norm{\cdot}_{V}$ hence $F(T^{*}\cdot)$ is continuous with respect to $U\coloneqq TVT^{*} L^{+}_{1}(\calX)$ and we can form a $\tilde{U}$ from $U$ that is injective by making any zero eigenvalues non-zero. Since this $\tilde{U}$ would result in larger norm values $F(T^{*}\cdot)$ is also continuous with respect to $\norm{\cdot}_{\tilde{U}}$. Forming an injective operator from a possibly non-injective one is done so that the Gaussian measure which has covariance operator the same as the operator used in the continuity statement has full support. Now that we know $F(T^{*}\cdot)$ has this strong continuity property, and that it is positive at a point, the next result shows that the limit of integrals with respect to $N_{I_{n}}$ is positive. 

\begin{theorem}\label{thm:lower_bound}
Let $V\in L^{+}_{1}(\calX)$ be injective. Suppose $f\colon\calX\rightarrow [0,\infty)$ is continuous with respect to the norm $\norm{\cdot}_{V}= \langle V \cdot,\cdot\rangle_{\calX}^{1/2}$ and define $I_{n} = \sum_{i=1}^{\infty}\omega_{i}^{(n)}e_{i}\otimes e_{i}$ where $\omega_{i}^{(n)} = 1$ if $i\leq n$ and $\omega_{i}^{(n)} = i^{-2}$ if $i > n$ and $\{e_{i}\}_{i=1}^{\infty}$ is the eigenbasis of $V$. If there exists a point $x_{0}\in\calX$ such that $f(x_{0}) > 0$ then 
\begin{align*}
    \lim_{n\rightarrow\infty}\int_{\calX}f(x)dN_{I_{n}}(x) > 0.
\end{align*}
\end{theorem}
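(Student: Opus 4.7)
The plan is to use $V$-continuity of $f$ to reduce the problem to a lower bound on the Gaussian mass of a $V$-ball, and then exploit the fact that $I_n$ was chosen so that $V^{1/2} I_n V^{1/2}$ converges in trace norm to $V$, which yields weak convergence of the pushforward measures on $\calX$.

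First, since $f(x_0) > 0$ and $f$ is continuous with respect to $\norm{\cdot}_V$, there exists $\delta > 0$ such that $f(x) > f(x_0)/2$ whenever $\norm{x - x_0}_V < \delta$. Consequently
\begin{equation*}
\int_{\calX} f(x)\, dN_{I_n}(x) \;\geq\; \frac{f(x_0)}{2}\, N_{I_n}\bigl(\{x \in \calX : \norm{x - x_0}_V < \delta\}\bigr),
\end{equation*}
so it suffices to prove that the liminf of the Gaussian masses on the right is strictly positive.

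The key observation is that $\norm{x - x_0}_V = \norm{V^{1/2}(x - x_0)}_{\calX}$, so if $X \sim N_{I_n}$ then $V^{1/2} X \sim N_{V^{1/2} I_n V^{1/2}}$, and the event $\{\norm{X - x_0}_V < \delta\}$ becomes $\{\norm{Y_n - V^{1/2} x_0}_\calX < \delta\}$ for $Y_n \sim N_{V^{1/2} I_n V^{1/2}}$. I would then show that $N_{V^{1/2} I_n V^{1/2}} \to N_V$ weakly on $\calX$. Since $V$ and $I_n$ share the eigenbasis $\{e_i\}$, with eigenvalues $\lambda_i$ and $\omega_i^{(n)}$ respectively, the operator $V^{1/2} I_n V^{1/2}$ has eigenvalues $\lambda_i \omega_i^{(n)}$, and
\begin{equation*}
\norm{V^{1/2} I_n V^{1/2} - V}_1 = \sum_{i > n} \lambda_i (1 - i^{-2}) \;\leq\; \sum_{i > n} \lambda_i \;\to\; 0,
\end{equation*}
as $V$ is trace class. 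Trace-norm convergence of covariance operators implies weak convergence of the corresponding centred Gaussian measures on $\calX$ (one can check this directly via the characteristic functionals, which are $s \mapsto \exp(-\tfrac{1}{2}\langle V^{1/2} I_n V^{1/2} s, s\rangle)$ and converge pointwise, combined with uniform tightness which follows from the trace-norm bound).

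To conclude, apply the Portmanteau theorem to the open set $\{y : \norm{y - V^{1/2} x_0}_\calX < \delta\}$:
\begin{equation*}
\liminf_{n \to \infty} N_{I_n}\bigl(\norm{X - x_0}_V < \delta\bigr) \;\geq\; N_V\bigl(\norm{Y - V^{1/2} x_0}_\calX < \delta\bigr).
\end{equation*}
Because $V \in L_1^+(\calX)$ is injective, $N_V$ has full topological support on $\calX$ (see \citet[Proposition 1.25]{DaPrato2006}), so the right-hand side is strictly positive, completing the argument. The main technical step is the trace-norm-to-weak-convergence implication for Gaussian measures; I would either cite a standard reference for this or verify it directly by combining pointwise convergence of characteristic functionals with a tightness check based on the uniform trace-norm bound on the covariance operators.
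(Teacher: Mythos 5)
Your proposal is correct, but it takes a genuinely different route from the paper. The paper argues directly: after the same reduction to bounding $N_{I_n}(B_V(x_0,r))$ from below, it changes variables to $N_{-V^{1/2}x_0,\,V^{1/2}I_nV^{1/2}}(B(0,r))$ and then inlines the proof of \citet[Proposition 1.25]{DaPrato2006} --- splitting coordinates into a head block $A_l$ and a tail block $B_l$, which are independent under the Gaussian, bounding the tail block uniformly in $n$ by Markov's inequality (using $\lambda_i^{(n)}\le\lambda_i$), and observing that for $n\ge L$ the head marginal coincides exactly with that of $N_V$, whose mass is positive by injectivity of $V$. You instead push forward by $V^{1/2}$, note that $V^{1/2}I_nV^{1/2}\to V$ in trace norm, invoke weak convergence of the Gaussians plus the Portmanteau inequality for the open ball, and finish with full support of $N_V$. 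Your version is more modular and gives the clean lower bound $\tfrac{f(x_0)}{2}N_V(B_V(x_0,\delta))$; the paper's is self-contained and elementary, at the cost of redoing the support argument by hand. The one place you should tighten is the tightness claim: a uniform bound on $\mathrm{Tr}(V^{1/2}I_nV^{1/2})$ alone does \emph{not} give tightness of a family of centred Gaussians on an infinite-dimensional Hilbert space (consider covariances $e_n\otimes e_n$, all of trace one, which escape every compact set). What saves you here is the domination $V^{1/2}I_nV^{1/2}\preceq V$ with $V$ fixed and trace class: pick weights $b_i\to\infty$ with $\sum_i b_i\lambda_i<\infty$ and use the compact ellipsoids $\{x:\sum_i b_i\langle x,e_i\rangle_{\calX}^2\le M\}$, whose complements have uniformly small measure by Markov; alternatively, cite the standard theorem that trace-norm convergence of covariance operators implies weak convergence of the corresponding centred Gaussian measures. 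With that repair the argument is complete.
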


\begin{proof}
This proof is largely formalising the intuition that if $f$ is slow varying and positive at a point then it must be positive in a big set, namely a ball with respect to a norm induced by a trace class operator. Then even as the $N_{I_{n}}$ measures contract as $n$ gets larger, the measure of the set is so large that it is bounded below. Let $\varepsilon > 0$ be such that $f(x_{0}) > \varepsilon$ then since $f$ is continuous with respect to $\norm{\cdot}_{V}$ there exists an $r > 0$ such that $f(x) > \varepsilon/2$ for all $x\in B_{V}(x_{0},r)$, the ball based at $x_{0}$ with radius $r$ with respect to $\norm{\cdot}_{V}$. Therefore
\begin{align*}
    \lim_{n\rightarrow\infty}\int_{\calX}f(x)dN_{I_{n}}(x) \geq \lim_{n\rightarrow\infty}\int_{B_{V}(x_{0},r)}\frac{\varepsilon}{2} dN_{I_{n}}(x),
\end{align*}
so it is sufficient to show the limit of the measure of this set is positive. 
Using standard change of variable formulas
\begin{align*}
    N_{I_{n}}(B_{V}(x_{0},r)) = N_{-x_{0},I_{n}}(B_{V}(0,r)) = N_{-V^{1/2}x_{0},V^{1/2}I_{n}V^{1/2}}(B(0,r)),
\end{align*}
and for ease of notation let $y_{0} = -V^{1/2}x_{0}$ and note $V_{n}\coloneqq V^{1/2}I_{n}V^{1/2} = \sum_{i=1}^{\infty}\lambda_{i}^{(n)}e_{i}\otimes e_{i}$ where $\lambda_{i}^{(n)} = \lambda_{i}$ for $i\leq n$ and $\lambda_{i}i^{-2}$ for $i > n$ where $\lambda_{i}$ are the eigenvalues of $V$ which are strictly positive by the assumption that $V$ is injective. 

Now we use the proof technique of \citet[Proposition 1.25]{DaPrato2006}. Set \begin{align*}
    A_{l}  & = \{x\in\calX\colon\sum_{i=1}^{l}\langle x,e_{i}\rangle_{\calX}^{2}\leq r^{2}/2\}\\
     B_{l}  & = \{x\in\calX\colon\sum_{i=l+1}^{\infty}\langle x,e_{i}\rangle_{\calX}^{2}\leq r^{2}/2\},
\end{align*}
    meaning $A_{l},B_{l}$ are independent under $V_{n}$ since they depend on different parts of the eigenbasis. Therefore for every $l\in\bbN$, $N_{y_{0},V_{n}}(B(0,r))\geq N_{y_{0},V_{n}}(A_{l})N_{y_{0},V_{n}}(B_{l})$. The Markov inequality yields
\begin{align*}
    N_{y_{0},V_{n}}(B_{l})  = 1 - N_{y_{0},V_{n}}(B_{l}^{c})
    & \geq 1 - \frac{2}{r^{2}}\int_{\calX}\sum_{i=l+1}^{\infty}\langle x,e_{i}\rangle_{\calX}^{2}dN_{y_{0},V_{n}}(x)\\
    & = 1 - \frac{2}{r^{2}}\left(\sum_{i=l+1}^{\infty}\lambda_{i}^{(n)} + \langle y_{0},e_{i}\rangle_{\calX}^{2}\right)\\
    & \geq 1 -\frac{2}{r^{2}}\left(\sum_{i=l+1}^{\infty}\lambda_{i} + \langle y_{0},e_{i}\rangle_{\calX}^{2}\right),
\end{align*}
where the second inequality is by the definition of $\lambda_{i}^{(n)}$. Note that the final expression doesn't depend on $n$ and is the tail of a finite sum. Therefore there exists an $L$, independent of $n$, such that for $l\geq L$ we have $N_{y_{0},V_{n}}(B_{l}) > 1/2$. 

Now take any $n\geq L$ then $N_{y_{0},V_{n}}(A_{L}) = N_{y_{0},V}(A_{L})$ since $A_{L}$ only depends on the first $L$ components of $V_{n}$ which match the first $n$ components of $V$. Since $V$ is injective $N_{y_{0},V}(A_{L}) > c > 0$ for some $c$ \citep{DaPrato2006}[Proposition 1.25]. 

In conclusion, for every $n\geq L$, $N_{y_{0},V_{n}}(A_{L})N_{y_{0},V_{n}}(B_{L}) > c/2 > 0$ so
\begin{align*}
    \lim_{n\rightarrow\infty}\int_{\calX}f(x)dN_{I_{n}}(x) > c\varepsilon/4 > 0,
\end{align*}
and the proof is complete. 
\end{proof}

Setting $F(T^{*}\cdot)$ as $f$ and $\tilde{U}$ as $V$ in Theorem \ref{thm:lower_bound} completes the proof for the SE-$T$ case since by \eqref{eq:lim_KSD_T} we can conclude that $\text{KSD}_{\calAv,K_{T}}(Q,P) > 0$.

For the IMQ-$T$ case we begin using the same limiting argument and the result from Example \ref{exp:ker_spec} which shows how the Fourier measure of the IMQ kernel may be written in terms of a Gaussian measures on $\calX$ and a Gaussian measure on $\bbR$. Using $k_{T}$ to denote the SE-$T$ kernel and $h_{T}$ the Stein kernel corresponding to the IMQ-$T$ kernel 
\begin{align}
	\text{KSD}_{\calAv,k_{T}}(Q,P)^{2} & = \int_{\calX}\int_{\calX}h_{T}(x,y)dQ(x)dQ(y) \nonumber\\
	& = \lim_{n\rightarrow\infty}\int_{\calX}\int_{\calX}h_{I_{n}^{1/2}T}(x,y)dQ(x)dQ(y) \label{eq:lim}\\
	& = \lim_{n\rightarrow\infty}\int_{\bbR}\int_{\calX}\left\|F(\eta T^{*}s)\right\|_{\calX_{\bbC}}^{2}dN_{I_{n}}(s)dN_{1}(\eta)\label{eq:imq_rep}\\
	& = \int_{\bbR}\lim_{n\rightarrow\infty}\int_{\calX}\left\|F(\eta T^{*}s)\right\|_{\calX_{\bbC}}^{2}dN_{I_{n}}(s)dN_{1}(\eta)\label{eq:swap_lim}\\
	& = \int_{\bbR}\text{KSD}_{\calAv,k_{\text{SE-}\eta T}}(Q,P)^{2}dN_{1}(\eta),\label{eq:IMQ_low}
\end{align}
where \eqref{eq:lim} is the same limiting argument as \eqref{eq:DCT}, \eqref{eq:imq_rep} is Theorem \ref{thm:Tik_KSD} and the change of variables used in \eqref{eq:lim_KSD_T}, \eqref{eq:swap_lim} is the  dominated convergence theorem and \eqref{eq:IMQ_low} is by \eqref{eq:lim_KSD_T}. 

Note that for all $\eta\neq 0$ the SE-$\eta T$ kernel satisfies the assumptions required for the KSD based on the SE-$\eta T$ kernel to be separating. Therefore if $P=Q$ then \eqref{eq:IMQ_low} shows the KSD based on the IMQ-$T$ kernel is an integral of zero-valued functions and hence is zero. On the other hand is $P\neq Q$ then the KSD based on the IMQ-$T$ kernel is an integral of an almost everywhere positive function, hence is positive. This completes the proof.



\end{document}